\documentclass[a4paper,11pt,reqno]{amsart}

\usepackage{esint} 

\usepackage{enumerate}
\usepackage{amsmath,amsthm,amssymb,amsfonts,enumitem,mathtools,mathrsfs,dsfont}
\usepackage[english]{babel}
\usepackage{epsf,epsfig}
\usepackage{hyperref}
\usepackage[margin=1in]{geometry}

\hypersetup{ colorlinks=true, linkcolor=blue, urlcolor=blue, citecolor=red }

\newtheorem{theorem}{Theorem}[section]
\newtheorem{proposition}[theorem]{Proposition}

\newtheorem{lemma}[theorem]{Lemma}
\newtheorem{definition}[theorem]{Definition}

\numberwithin{equation}{section}

\usepackage{accents}
\newcommand*{\dt}[1]{%
  \accentset{\mbox{\large\bfseries .}}{#1}}

\title[The Solution to Agmon's Conjecture for the polylaplacian]
{The Solution to Agmon's Conjecture for the polylaplacian in rough domains in $\mathbb{R}^{n}$}

\author{Artur Andrade}
\address{Artur Andrade
\\
Department of Mathematics
\\
Baylor University
\\
Sid Richardson Bldg., 1410 S.~4th Street
\\
Waco, TX 76706, USA} \email{ArturHenrique\_\,DeOliv@baylor.edu}

\author{Dorina Mitrea}
\address{Dorina Mitrea
\\
Department of Mathematics
\\
Baylor University
\\
Sid Richardson Bldg., 1410 S.~4th Street
\\
Waco, TX 76706, USA} \email{Dorina\_\,Mitrea@baylor.edu}

\author{Irina Mitrea}
\address{Irina Mitrea
\\
Department of Mathematics
\\
Temple University\!
\\
1805\,N.\,Broad\,Street
\\
Philadelphia, PA 19122, USA} \email{imitrea@temple.edu}

\author{Marius Mitrea\\ }
\address{Marius Mitrea
\\
Department of Mathematics
\\
Baylor University
\\
Sid Richardson Bldg., 1410 S.~4th Street
\\
Waco, TX 76706, USA} \email{Marius\_\,Mitrea@baylor.edu}

\subjclass[2010]{Primary 31B10, 31B25, 35C15, 35J58, 42B20, 42B37, 49Q15;  
Secondary 35A01, 35A02, 42B25, 42B35, 45P05.}
\keywords{polyharmonic operator, higher-order Dirichlet problem, multi-layer integral operator, Calder\'on-Zygmund operator, generalized Banach function space, Ahlfors regular set, uniformly rectifiable set, nontangential maximal operator, 
nontangential boundary trace, Whitney array}

\date{\today}

\subjclass[2010]{Primary 31B10, 31B25, 35C15, 35J58, 42B20, 42B37, 49Q15;  
Secondary 35A01, 35A02, 42B25, 42B35, 45P05.}
\keywords{polyharmonic operator, higher-order Dirichlet problem, multi-layer integral operator, Calder\'on-Zygmund operator, generalized Banach function space, Ahlfors regular set, uniformly rectifiable set, nontangential maximal operator, 
nontangential boundary trace, Whitney array}

\begin{document}

\begin{abstract}
In his 1957 paper on the Dirichlet problem for higher-order elliptic equations in the plane, Agmon conjectured that 
there should exist higher-order equations in dimensions greater than two for which a potential-theoretic approach based 
on multi-layer integral operators can be successfully implemented. We show that this is indeed the case for the polyharmonic 
operator $\Delta^m$ of arbitrary order $m\in\mathbb{N}$ in $\mathbb{R}^n$, $n\geq 2$, even in geometrically rough domains permitting 
singularities beyond the scope of the existing theory of higher-order elliptic boundary value problems. Central to our approach is a 
new genre of multi-layer singular integral operators, which play for $\Delta^m$ the role assumed by the classical harmonic double 
layer potential in the treatment of the Dirichlet problem for harmonic functions.
\end{abstract}

\maketitle

\section{Main Result}
\label{Sec:1}

The present work lies at the intersection of three longstanding research programs. First, in \cite[p.\,90]{Cal80}, A.P.~Calder\'on 
advocated extending boundary layer potentials ``{\it to much more general elliptic systems $[$than the Laplacian$]$}.'' 
Second, in \cite[Problem~7, p.\,xvii]{Riv}, N.M.~Rivi\`ere posed the problem of ``{\it identifying classes of boundary data that ensure 
existence and uniqueness $[$for boundary value problems for $\Delta^2$ on ${\mathscr{C}}^1$ domains$]$}.'' Third, in his landmark 1957 paper 
on the Dirichlet problem for scalar higher-order elliptic operators in the plane, S.~Agmon observed that ``{\it all these results indicate 
strongly the possibility of a general potential theory for higher-order equations}'' \cite[pp.\,180--181]{Ag}. 

Here we pursue these themes by solving the Dirichlet and Regularity problems for the polylaplacian $\Delta^m$, 
the prototypical higher-order elliptic operator. Our results apply to a broad class of sets, including certain non-Lipschitz domains, 
and accommodate boundary data belonging to a wide spectrum of spaces characterized primarily by their compatibility with 
harmonic-analytic techniques. With notation and terminology to be clarified shortly, the main theorem reads as follows.

\begin{theorem}[\underline{Higher-Order Dirichlet and Regularity Problems for the Polylaplacian}]\label{DInRBVP-tk-INTRO}
Suppose $\Omega\subseteq{\mathbb{R}}^n$, for $n\in{\mathbb{N}}$ with $n\geq 2$, is an Ahlfors regular domain. Abbreviate $\sigma:={\mathcal{H}}^{n-1}\lfloor\partial\Omega$ and denote by $\nu$ the outward unit normal vector to $\Omega$. Also, fix an arbitrary reference point $x_\ast\in{\mathbb{R}}^n\setminus\overline\Omega$, and pick an aperture parameter $\kappa\in(0,\infty)$. Finally, assume ${\mathbb{X}}$ is a normed function space belonging to the ``hierarchy of good spaces'' on $\partial\Omega$ and, having selected an arbitrary $m\in{\mathbb{N}}$, formulate the ${\mathbb{X}}$-Dirichlet Problem for $\Delta^m$ in $\Omega$ as follows{\rm :}
\begin{equation}\label{tk1-acxvtru-INTRO.bis}
\begin{array}{l}
{\mathbb{X}}\text{-}{\rm DP}\,\,
\left\{
\begin{array}{l}
u\in{\mathscr{C}}^\infty(\Omega),\,\,\,\Delta^m u=0\,\text{ in }\,\Omega,\,\,\,\mathcal{N}_\kappa(\nabla^{m-1} u)\in{\mathbb{X}},
\\[8pt]
(\partial^\gamma u)\big|^{{}^{\kappa\text{\rm -nt}}}_{\partial\Omega}=f_\gamma\,\,\text{ for each $\gamma\in{\mathbb{N}}_0^n$ with $|\gamma|\leq m-1$},
\\[2pt]
\text{for an arbitrary $\dot{f}=\{f_\gamma\}_{|\gamma|\leq m-1}\in{\rm HWA}_{m-1}[{\mathbb{X}}]$}.
\end{array}
\right.
\end{array}
\end{equation}

Then there exists a threshold $\delta\in(0,1)$, depending only on dimension $n$, the Ahlfors regularity character of $\partial\Omega$, 
the space ${\mathbb{X}}$, and $m$, such that if {\rm(}with {\rm BMO} denoting the John-Nirenberg space{\rm )}
\begin{equation}\label{bavbx638n-tk1-INTRO} 
\|\nu\|_{{\rm BMO}(\partial\Omega,\sigma)}<\delta
\end{equation}
one concludes that $\Omega$ is actually a two-sided {\rm NTA} domain, in the sense of Jerison-Kenig {\rm\cite{JeKe82}}, with an unbounded 
boundary, and the ${\mathbb{X}}$-Dirichlet Problem \eqref{tk1-acxvtru-INTRO.bis} has a unique solution. 

In addition, the unique solution $u$ of \eqref{tk1-acxvtru-INTRO.bis} satisfies the following properties{\rm :}

\begin{enumerate}[label=\textit{(\roman*)}, ref=\textit{(\roman*)}]
\item\label{property1-INTRO} There exists some constant $C\in(0,\infty)$ independent of $\dot{f}$ with the property that 
\begin{equation}\label{a538bxd9-tk2-INTRO.DIR}
\|{\mathcal{N}}_\kappa(\nabla^{m-1}u)\|_{\mathbb{X}}\leq C\|\dot{f}\|_{{\rm HWA}_{m-1}[{\mathbb{X}}]}. 
\end{equation}

\item\label{property2-INTRO} 
With ${\mathbb{P}}_{m-2,x_\ast}$ denoting the Taylor polynomial functor 
{\rm (}of order $m-2$ and base point $x_\ast${\rm ;} cf. \eqref{mxcvbprm3fdh.PP-INTRO}{\rm )},
at each point $x\in\Omega$ one may express the function $u$ as
\begin{align}\label{eq:U-INT-REP-FORM}
\qquad u(x)=\frac{1}{\omega_{n-1}}\sum_{|\gamma|\leq m-1}\,\,\,\int\limits_{\partial\Omega}\nu(y)\cdot\Bigg\{ &
\frac{y-x}{|x-y|^{n}}\frac{(x-y)^{\gamma}}{\gamma!}
\nonumber\\[-2pt]
&\quad
-{\mathbb{P}}_{m-2,x_\ast}\Bigg[\frac{y-\cdot}{|\cdot-y|^{n}}\frac{(\cdot-y)^{\gamma}}{\gamma!}\Bigg](x)\Bigg\}
g_{\gamma}(y)d\sigma(y),
\end{align}
for some $\dot{g}=\{g_{\gamma}\}_{|\gamma|\leq m-1}\in{\rm HWA}_{m-1}[{\mathbb{X}}]$ which is uniquely determined {\rm(}and quantitatively controlled{\rm)} by the boundary datum $\dot{f}$.
\vskip 0.08in

\item\label{property3-INTRO} At each point $x\in\Omega$, the vector of derivatives $\nabla^{m-1}u:=(\partial^{\lambda}u)_{|\lambda|=m-1}$ is represented as
\begin{equation}\label{MSafddfgdsa-THM}
\qquad
(\nabla^{m-1}u)(x)=\Bigg(\sum_{|\beta|=m-1}\int_{\partial\Omega}\nu(y)\cdot (y-x)\, k_{\lambda\beta}(x-y)g_{\beta}(y)d\sigma(y)\Bigg)_{|\lambda|=m-1}
\end{equation}
where 
\begin{equation}\label{HShagdfsdg-THM}
\parbox{10.70cm}{for each pair of multi-indices $\lambda,\beta\in\mathbb{N}_{0}^{n}$ with $|\lambda|=|\beta|=m-1$, the kernel 
function $k_{\lambda\beta}$ belongs to $\mathscr{C}^{\infty}(\mathbb{R}^{n}\setminus\{0\})$, is even, positive homogeneous of degree $-n$, 
has $\Delta^m\big[z\,k_{\lambda\beta}(z)\big]=0$ in $\mathbb{R}^{n}\setminus\{0\}$,}
\end{equation}
and satisfies the normalization condition 
\begin{equation}\label{eq:ugt-igFRD}
\int_{S^{n-1}}k_{\lambda\beta}(\xi)\,d{\mathcal{H}}^{n-1}(\xi)=\delta_{\lambda\beta}.
\end{equation}
In fact, at each $z\in\mathbb{R}^{n}\setminus\{0\}$ the kernel functions in \eqref{MSafddfgdsa-THM} are explicitly 
given by 
\begin{equation}\label{Kehsdbg-THM}
\qquad
k_{\lambda\beta}(z):=\frac{1}{\omega_{n-1}}
\sum_{\mu\leq\lambda\leq\mu+\beta}\frac{m}{|\mu|+1}\binom{\lambda}{\mu}\partial^{\mu}_z\bigg[\frac{1}{|z|^{n}}\bigg]\,
\frac{z^{\beta+\mu-\lambda}}{(\beta+\mu-\lambda)!}.
\end{equation}

\item\label{property4-INTRO} 
In a natural quantitative fashion, one has 
\begin{align}\label{a538bxd9-tk9-INTRO}
\mathcal{N}_\kappa(\nabla^{m}u)\in{\mathbb{X}}\Longleftrightarrow\dot{f}\in{\rm HWA}_{m-1}[{\mathbb{X}}_1].
\end{align}
\end{enumerate}

As a corollary, the ${\mathbb{X}}_1$-Regularity Problem for $\Delta^m$ in $\Omega$, formulated as
\begin{equation}\label{tk1-acxvtru-INTRO.bis.REG}
\begin{array}{l}
{\mathbb{X}}_1\text{-}{\rm RP}\,\,
\left\{
\begin{array}{l}
u\in{\mathscr{C}}^\infty(\Omega),\,\,\,\Delta^mu=0\,\text{ in }\,\Omega,
\\[8pt]
\mathcal{N}_\kappa(\nabla^{m-1} u)\in{\mathbb{X}}\,\,\text{ and }\,\,\mathcal{N}_\kappa(\nabla^{m}u)\in{\mathbb{X}},
\\[8pt]
(\partial^\gamma u)\big|^{{}^{\kappa\text{\rm -nt}}}_{\partial\Omega}=f_\gamma\,\,\text{ for each $\gamma\in{\mathbb{N}}_0^n$ with $|\gamma|\leq m-1$},
\\[2pt]
\text{for an arbitrary $\dot{f}=\{f_\gamma\}_{|\gamma|\leq m-1}\in{\rm HWA}_{m-1}[{\mathbb{X}}_1]$},
\end{array}
\right.
\end{array}
\end{equation}
is also well posed, in the sense of Hadamard{\rm :} a solution exists, is unique, and satisfies 
\begin{equation}\label{a538bxd9-tk2-INTRO.REG}
\|{\mathcal{N}}_\kappa(\nabla^{m-1}u)\|_{\mathbb{X}}
+\|{\mathcal{N}}_\kappa(\nabla^{m}u)\|_{\mathbb{X}}\leq C\|\dot{f}\|_{{\rm HWA}_{m-1}[{\mathbb{X}}_1]}
\end{equation}
for some $C\in(0,\infty)$ independent of $\dot{f}$. Furthermore, the unique solution of the 
${\mathbb{X}}_1$-Regularity Problem \eqref{tk1-acxvtru-INTRO.bis.REG} is representable as in \eqref{eq:U-INT-REP-FORM}
for some $\dot{g}=\{g_\gamma\}_{|\gamma|\leq m-1}\in{\rm HWA}_{m-1}[{\mathbb{X}}_1]$ which is uniquely determined 
{\rm (}and quantitatively controlled{\rm )} by the boundary datum $\dot{f}$.
\end{theorem}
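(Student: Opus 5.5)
The plan is to use the method of boundary layer potentials, with a multi-layer double layer operator adapted to $\Delta^m$ playing the role of the harmonic double layer. The proof has three stages: geometric preliminaries, boundedness of the multi-layer, and invertibility of the boundary operator.

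\emph{Geometry.} First we settle the geometric claims. By the known theory of $\delta$-flat Ahlfors regular domains (the Semmes--Kenig--Toro line of results), smallness of $\|\nu\|_{{\rm BMO}(\partial\Omega,\sigma)}$ forces $\Omega$ to be a two-sided NTA domain with unbounded, uniformly rectifiable boundary. Uniform rectifiability then makes the Calder\'on--Zygmund theory on $\partial\Omega$ available in every space ${\mathbb{X}}$ of the ``hierarchy of good spaces.''

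\emph{The multi-layer and its bounds.} For a Whitney array $\dot g\in{\rm HWA}_{m-1}[{\mathbb{X}}]$ we define $\dot{\mathcal D}\dot g$ by the right-hand side of \eqref{eq:U-INT-REP-FORM}. The Taylor correction ${\mathbb{P}}_{m-2,x_\ast}$ makes the integrals converge for unbounded $\partial\Omega$ and data with polynomial growth, and it does not affect $\nabla^{m-1}$.
\begin{enumerate}[label=(\alph*)]
\item Check that each summand is polyharmonic in $x$. The kernel $\frac{(y-x)}{|x-y|^n}\frac{(x-y)^\gamma}{\gamma!}$ is a derivative of $\Delta^{m}$-type fundamental solutions multiplied by polynomials of degree $\le m-1$.
\item Differentiate $m-1$ times in $x$. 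Integrate by parts on $\partial\Omega$ using the compatibility conditions of Whitney arrays, which relate $\nabla_{\tan}g_\gamma$ to the $g_{\gamma+e_j}$. This collapses the expression to the top-order formula \eqref{MSafddfgdsa-THM} with kernels \eqref{Kehsdbg-THM}.
\item Verify the listed kernel properties: evenness, homogeneity $-n$, $\Delta^m[z\,k_{\lambda\beta}]=0$, and the normalization \eqref{eq:ugt-igFRD}. These are Fourier/spherical-harmonic computations.
\end{enumerate}
Since \eqref{MSafddfgdsa-THM} has the ``$\nu(y)\cdot(y-x)\,k(x-y)$'' structure of a double layer with an even, homogeneous kernel, the general theory on uniformly rectifiable sets gives three facts: $\|\mathcal N_\kappa(\nabla^{m-1}\dot{\mathcal D}\dot g)\|_{\mathbb X}\lesssim\|\dot g\|$, existence of nontangential traces, and the jump relation $\big(\partial^\gamma\dot{\mathcal D}\dot g\big)\big|^{\kappa{\rm -nt}}_{\partial\Omega}=\big(\tfrac12 I+\dot K\big)\dot g$. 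The normalization \eqref{eq:ugt-igFRD} is exactly what produces the coefficient $\tfrac12\delta_{\lambda\beta}$. The lower-order components of the trace then follow by integrating along the array.

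\emph{Invertibility of $\tfrac12 I+\dot K$ (the main obstacle).} The crucial estimate is
\[
\|\dot K\|_{{\rm HWA}_{m-1}[\mathbb X]\to{\rm HWA}_{m-1}[\mathbb X]}\le C\|\nu\|_{\rm BMO}.
\]
Because the kernel is paired with $\nu(y)\cdot(y-x)$, and $\Delta^m[z\,k(z)]=0$, one can rewrite the principal part of $\dot K$ as commutators $[M_{\nu_j},T]$ with Calder\'on--Zygmund operators $T$, modulo terms that vanish in the top-order components. The Coifman--Rochberg--Weiss type commutator estimates on uniformly rectifiable sets then give the bound $C\|\nu\|_{\rm BMO}$. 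The delicate point is doing this compatibly with the Whitney array structure. The first thing to try is to prove the bound for the top-order block, then propagate it to lower orders via the tangential-derivative compatibility, modulo polynomial normalization at $x_\ast$. Choosing $\delta$ with $C\delta<\tfrac12$, a Neumann series inverts $\tfrac12 I+\dot K$ on ${\rm HWA}_{m-1}[\mathbb X]$, and also on ${\rm HWA}_{m-1}[\mathbb X_1]$ by the same commutator argument applied to one more derivative. Setting $\dot g=(\tfrac12I+\dot K)^{-1}\dot f$ then gives existence, \ref{property1-INTRO}, \ref{property2-INTRO} and \ref{property3-INTRO}.

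\emph{Uniqueness, regularity and the Regularity Problem.} For uniqueness, take a null solution $u$ and use the Green formula with the Green function of $\Delta^m$ in $\Omega$; equivalently, run a duality argument that solves the adjoint problem in the Köthe-dual space $\mathbb X'$, which lies in the same hierarchy. The NTA geometry allows approximation by interior subdomains, and together these force $u\equiv0$. For \ref{property4-INTRO}, the direction ``$\Leftarrow$'' comes from one more differentiation of \eqref{MSafddfgdsa-THM} combined with the $\mathbb X_1$ invertibility. The direction ``$\Rightarrow$'' uses the fact that $\mathcal N_\kappa(\nabla^m u)\in\mathbb X$ places the traces in $\mathbb X_1$ via tangential derivatives of nontangential limits. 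The Regularity Problem well-posedness then follows by combining \ref{property4-INTRO} with the Dirichlet result.
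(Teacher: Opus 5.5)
Your invertibility step has a genuine gap. The space ${\rm HWA}_{m-1}[\mathbb X]$ carries only the seminorm $\sum_{|\gamma|=m-1}\|f_\gamma\|_{\mathbb X}$, which vanishes exactly on $\dot{\mathcal P}_{m-2}$ (Theorem~\ref{thm:HWA-X}). So a bound $\|\dot K\|\le C\|\nu\|_{\rm BMO}$ in this seminorm is really a statement about the induced operator on the quotient. It says nothing about what $\dot K$ does to the lower-order entries.

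In fact $\dot K$ is not small on the full space. Since $x_\ast\in\mathbb R^n\setminus\overline\Omega$, Proposition~\ref{prop:claim} gives $\dot{\mathfrak D}_{\Delta^m}\dot P=P$ in $\Omega$ for every $P\in\mathcal P_{m-2}$. Hence $\dot K\dot P=\tfrac12\dot P$, and your series $\sum_k(-2\dot K)^k$ diverges on $\dot{\mathcal P}_{m-2}$. Nor can the top-order bound be ``propagated'' through the compatibility conditions, because these determine the lower-order entries only modulo $\dot{\mathcal P}_{m-2}$. That residual ambiguity is exactly what your argument leaves untreated.

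A telltale sign is that your proof never uses $x_\ast\notin\overline\Omega$, yet it must. If $x_\ast\in\Omega$, the same proposition gives $\dot{\mathfrak D}_{\Delta^m}\dot P\equiv0$ in $\Omega$. Then for $m\ge2$ the operator $\tfrac12I+\dot K$ annihilates $\dot{\mathcal P}_{m-2}\neq\{0\}$, and the density $\dot g$ in (ii) cannot be unique.

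The missing ingredient is the paper's two-step scheme.
\begin{enumerate}
\item Run the Neumann series on the quotient ${\rm HWA}_{m-1}[\mathbb X]/\dot{\mathcal P}_{m-2}$, which is a Banach space normed by the top-order seminorm (a nontrivial input, Theorem~\ref{thm:HWA-X}(2)). There the chord-dot-normal estimates give $\|[\dot K]\|\le C\|\nu\|_{\rm BMO}\ln(e/\|\nu\|_{\rm BMO})$; any bound tending to zero suffices.
\item Lift back algebraically using $(\tfrac12I+\dot K)\dot P=\dot P$ on $\dot{\mathcal P}_{m-2}$. If $(\tfrac12I+\dot K)\dot h=0$, then $\dot h\in\dot{\mathcal P}_{m-2}$, whence $\dot h=(\tfrac12I+\dot K)\dot h=0$. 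If $(\tfrac12I+\dot K)\dot h_0=\dot f-\dot P$, then $(\tfrac12I+\dot K)(\dot h_0+\dot P)=\dot f$.
\end{enumerate}
The reproduction identity is where $x_\ast\notin\overline\Omega$ enters. It is proved with the divergence theorem in $\Omega$, applied in two places:
\begin{itemize}
\item to $\nabla[E_\Delta(\cdot-x)-E_\Delta(\cdot-x_\ast)]$, where the subtraction supplies the $|y|^{-n}$ decay needed on the unbounded boundary and the divergence pairs with $1$ to give $1$;
\item to $\nabla\partial^aE_\Delta(\cdot-x_\ast)$ with $|a|>0$, which contribute nothing.
\end{itemize}
Without this identity you could still get existence and (i), by adding $P$ to $\dot{\mathfrak D}_{\Delta^m}\dot h_0$. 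You could not get (ii) with a uniquely determined $\dot g$, and the identical issue arises for your $\mathbb X_1$ Neumann series in the Regularity problem.

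Two smaller remarks. First, the polyharmonicity in (a) holds because $\nabla E_\Delta$ is harmonic and a harmonic function times a polynomial of degree $\le m-1$ is $\Delta^m$-null, not for the reason you state. Second, your duality route to uniqueness matches the paper's, which invokes \cite[Corollary~18.2]{MMM.HOPT} once the Regularity problem is solvable for every space in the hierarchy.
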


Solving the ${\mathbb{X}}$-Dirichlet Problem \eqref{tk1-acxvtru-INTRO.bis} in the manner described in items {\it (i)}-{\it (iii)} of Theorem~\ref{DInRBVP-tk-INTRO} 
answers, for the polylaplacian, the question posed 70 years ago by S.~Agmon in \cite[p.\,180]{Ag}. Reflecting on his successful resolution of the Dirichlet 
problem for planar domains of class ${\mathscr{C}}^{1,r}$ with $r>1/2$ and scalar higher-order elliptic operators via multi-layer potential methods, 
Agmon points out that his approach relies on certain exceptional circumstances, namely the possibility of associating with the operator in question 
a multiple layer potential which is distinguished in the sense that the solution to the problem in the half-plane can be represented as a convolution 
on the boundary of the data with ``Poisson kernels'' of a special nature. He then goes on to note that ``{\it However, these results do not hold in 
general for higher order elliptic equations in more than two variables and this is the reason for the limitation of the method to the case of 
higher order equations in two variables only.}'' Agmon conjectured that, nevertheless, there should exist higher-order equations in more than 
two variables for which a potential theoretic approach based on integral operators of multi-layer type 
can be implemented successfully, and asked for a characterization of the class of such equations.

The work in this article demonstrates that the polyharmonic operator $\Delta^m$ in arbitrary Euclidean dimensions belongs to the aforementioned class. 
On the other hand, Mitrea-Mitrea-Mitrea proved in \cite{MMM.HOPT} that this is {\it not} the case for the homogeneous, constant real-matrix coefficient, 
symmetric, weakly elliptic, $n\times n$ system of order $2m$ in ${\mathbb{R}}^n$ given by 
\begin{equation}\label{eq:YGYFt-HFV}
\begin{array}{c}
I_{n\times n}\Delta^m-2\nabla\Delta^{m-1}{\rm div}=\Big(\delta_{jk}\Delta^m-2\Delta^{m-1}\partial_j\partial_k\Big)_{1\leq j,k\leq n}
\\[2pt]
\text{for each $n\in{\mathbb{N}}$ with $n\geq 2$ and each $m\in{\mathbb{N}}$}.\qquad
\end{array}
\end{equation}

An elegant feature of Theorem~\ref{DInRBVP-tk-INTRO} is that, in the particular case when $m=1$ and ${\mathbb{X}}$ belongs to the Lebesgue scale, 
it reduces precisely to the classical formulation of the Dirichlet problem for the Laplacian together with the familiar double layer potential 
representation of its solution. The latter framework has been developed by Fabes-Jodeit-Rivi\`ere in \cite{FJR}, Dahlberg-Kenig in \cite{DaKe}, 
and Verchota \cite{Ve1} for ${\mathscr{C}}^1$ and Lipschitz domains, by Hofmann-Mitrea-Taylor in \cite{HoMiTa10} for regular 
Semmes-Kenig-Toro domains, and by Mitrea-Mitrea-Mitrea in \cite{GHA.V} for $\delta$-{\rm AR} domains with a sufficiently small $\delta\in(0,1)$. 

What is however most remarkable is that, despite the final result exhibiting a form strikingly reminiscent of the classical Laplacian theory, 
neither the route leading to it nor the representation formula itself has a classical counterpart in the higher-order setting. Indeed, there is no a 
priori reason to expect that solutions of higher-order elliptic systems should admit a boundary layer representation of this nature, let alone one 
so closely paralleling the second-order theory. The identification of the correct potential-theoretic objects and the establishment of their mapping 
and invertibility properties require a substantially different analysis, reflecting genuinely new phenomena that arise at higher order. 
Consequently, while the above theorem may be viewed as a most faithful higher-order counterpart of the potential-theoretic treatment of the 
Dirichlet problem for the Laplacian via boundary layer methods, the integral representation formula and the manner in which this is employed 
in the treatment of higher-order Dirichlet and Regularity problems are genuinely new advances rather than extensions 
of previously known constructions.

Since the groundbreaking contributions of B.~Dahlberg, C.~Kenig, J.~Pipher, and G.~Verchota in the 1980s and 1990s, 
\cite{DKPV}, \cite{DKV1986}, \cite{PV1}, \cite{PV3}, \cite{PiVe1995}, \cite{Ver-p}, the theory of higher-order elliptic boundary value problems 
has remained largely confined to ${\mathscr{C}}^1$ and Lipschitz domains, with boundary data drawn from Lebesgue-type spaces. Building on the 
foundational work by D.~Mitrea, I.~Mitrea, M.~Mitrea in the series \cite{GHA.I}-\cite{GHA.V} and the recent monograph \cite{MMM.HOPT}, in this paper 
we establish well-posedness results involving classes of function spaces that are merely harmonic-analysis-friendly, and geometrically rough domains, 
tolerant of singularities not previously treated in the literature in connection with the higher-order elliptic boundary value problems considered here. 
For example, the latter class of domains permits the occurrence of certain boundary spiral points; see the discussion in \cite{GHA.V} and \cite{MMM.HOPT}. 
In particular, such domains need not be locally representable as upper graphs, placing them beyond the reach of techniques based on flattening the boundary. 
As a result, their geometric irregularity must be dealt with directly, rather than bypassed through changes of variables.

While all previous works on the Dirichlet problem for the polylaplacian with nontangential maximal function control of the solution are 
formulated exclusively in Lebesgue spaces, our higher-order Dirichlet problem is naturally posed in spaces of homogeneous Whitney arrays 
${\rm HWA}_{m-1}[{\mathbb X}]$ modeled on normed function spaces ${\mathbb X}$ belonging to the hierarchy of good spaces on $\partial\Omega$
(see \S\ref{Sec:HGS}). This introduces a novel functional-analytic dimension to the problem. In particular, any Generalized Banach Function Space ${\mathbb X}$ 
(cf. \cite[\S5.1]{GHA.II}) for which the Hardy-Littlewood maximal operator is bounded both on ${\mathbb X}$ and on its K\"othe dual ${\mathbb X}'$ 
is admissible. Such a framework already offers a plethora of examples, including Lebesgue spaces, sums and intersections of Lebesgue spaces, Muckenhoupt weighted 
Lebesgue spaces, Lorentz spaces, Morrey spaces, Herz spaces, and rearrangement-invariant Banach function spaces with Boyd indices in $(1,\infty)$.

\section{Background Material}
\label{Sec:2}

We begin by clarifying notation and terminology used in the formulation of Theorem~\ref{DInRBVP-tk-INTRO}.
We let ${\mathcal{L}}^n$ denote the Lebesgue measure 
in ${\mathbb{R}}^n$, and let ${\mathcal{H}}^{n-1}$ stand for the $(n-1)$-dimensional Hausdorff measure in ${\mathbb{R}}^n$. 
An {\it Ahlfors} {\it regular} {\it domain}, as defined in \cite[Definition~5.9.15, p.\,451]{GHA.I}, is an open set 
$\Omega\subseteq{\mathbb{R}}^n$ whose topological boundary $\partial\Omega$ is an Ahlfors regular set, in the sense of 
L.~Ahlfors \cite{Ahlfors1935} and G.~David \cite{David1984}, which satisfies 
\begin{align}\label{aakudgha.skdn.Cluj.Napoca}
&\hskip -0.10in
\mathcal{H}^{n-1}\Bigg(\Big\{x\in\partial\Omega:\,\limsup_{r\rightarrow 0^{+}}
\frac{\mathcal{L}^n\big(\Omega\cap B(x,r)\big)}{r^n}=0\text{ or} 
\\[0pt]
&\hskip 1.20in
\limsup_{r\rightarrow 0^{+}} 
\frac{\mathcal{L}^n\big((\mathbb{R}^n\setminus\Omega)\cap B(x,r)\big)}{r^n}=0\Big\}\Bigg)=0.
\nonumber
\end{align}
Condition \eqref{aakudgha.skdn.Cluj.Napoca} prevents $\partial\Omega$ from developing ``too many'' cusps, 
and also precludes $\Omega$ from having ``significant'' cracks. Any Ahlfors regular domain has an outward unit normal vector 
$\nu$, in the sense of De Giorgi-Federer (hence, geometric measure theoretic), which is well defined at $\mathcal{H}^{n-1}$-a.e. point 
on $\partial\Omega$ (see \cite[p.\,451]{GHA.I}). 

Given an Ahlfors regular domain $\Omega\subseteq\mathbb{R}^{n}$ and having picked an aperture parameter $\kappa>0$, 
define the nontangential approach regions 
\begin{equation}\label{NT-FF1}
\Gamma_{\kappa}(x):=\big\{y\in\Omega:\,|x-y|<(1+\kappa)\,{\rm dist}\,(y,\partial\Omega)\big\}\,\,\text{ for each }\,\,x\in\partial\Omega.
\end{equation}
For a Lebesgue measurable function $u:\Omega\to{\mathbb{C}}$, the {\tt nontangential} {\tt maximal} {\tt function} of $u$ 
with aperture $\kappa$ is defined as 
\begin{equation}\label{LDG-2Rd.NNN}
{\mathcal{N}}_{\kappa}u:\partial\Omega\longrightarrow[0,\infty],\quad
({\mathcal{N}}_{\kappa}u)(x):=\|u\|_{L^\infty(\Gamma_{\kappa}(x),{\mathcal{L}}^n)}\,\,\text{ for each }\,x\in\partial\Omega.
\end{equation}
Also, whenever $x\in\partial\Omega$ is such that $x\in\overline{\Gamma_{\kappa}(x)}$ and there exists a 
Lebesgue measurable set $N(x)\subset\Gamma_\kappa(x)$ satisfying ${\mathcal{L}}^n(N(x))=0$ for which the limit 
\begin{equation}\label{van-hTF}
\lim\limits_{(\Gamma_\kappa(x)\setminus N(x))\ni y\to x}u(y)\,\,\text{ exists in }\,\,{\mathbb{C}},
\end{equation}
its value is denoted by $\big(u\big|^{{}^{\kappa\text{\rm -nt}}}_{\partial\Omega}\big)(x)$ 
and is referred to as the {\tt nontangential} {\tt limit} of $u$ at $x$.

Continue to assume that  $\Omega\subseteq\mathbb{R}^{n}$ is an Ahlfors regular domain. Abbreviate $\sigma:={\mathcal{H}}^{n-1}\lfloor\partial\Omega$ 
and denote by $\nu=(\nu_1,\dots,\nu_n)$ the geometric measure-theoretic outward unit normal to $\Omega$. Following \cite{HoMiTa10}, for $j,k\in\{1,\dots,n\}$
the tangential derivative operator $\partial_{\tau_{jk}}$ acts on $\varphi\in\mathscr{C}^1$ near $\partial\Omega$ by
\begin{equation}\label{def-TAU-D}
\partial_{\tau_{jk}}\varphi:=\nu_j\big(\partial_k\varphi\big)\big|_{\partial\Omega}-\nu_k\big(\partial_j\varphi\big)\big|_{\partial\Omega}
\,\,\text{ at $\sigma$-a.e. point on }\,\,\partial\Omega,
\end{equation}
and extends, via the integration-by-parts identity $\int_{\partial\Omega}(\partial_{\tau_{jk}}\varphi)\psi\,d\sigma
=-\int_{\partial\Omega}\varphi(\partial_{\tau_{jk}}\psi)\,d\sigma$ (valid for $\psi\in\mathscr{C}^1_c(\mathbb{R}^n)$
by the De~Giorgi--Federer Divergence Theorem), to a distribution-valued operator on $L^1_{\rm loc}(\partial\Omega,\sigma)$.
As in \cite{GHA.II}, write $L^1_{1,{\rm loc}}(\partial\Omega,\sigma)$ for the local Sobolev space of order one on
$\partial\Omega$, consisting of $f\in L^1_{\rm loc}(\partial\Omega,\sigma)$ for which each $\partial_{\tau_{jk}}f$
is (canonically identified --\,cf. \cite[Corollary~3.7.3, p.\,283]{GHA.I}\,-- with) a function in $L^1_{\rm loc}(\partial\Omega,\sigma)$.

To introduce ``compatibility conditions'' of the sort discussed in the next definition, 
recall the elementary multi-indices $e_j:=(\delta_{jk})_{1\leq k\leq n}\in{\mathbb{N}}_0^n$, for $1\leq j\leq n$. 

\begin{definition}\label{def-CC}
Let $n,m\in\mathbb{N}$ with $n\geq 2$, and let $\Omega\subseteq\mathbb{R}^{n}$ be an Ahlfors regular domain. Set $\sigma:={\mathcal{H}}^{n-1}\lfloor\partial\Omega$, and let $\nu=(\nu_1,\dots,\nu_n)$ denote the geometric measure-theoretic outward unit normal to $\Omega$. 
Given a family of functions $\dot{f}=\{f_{\gamma}\}_{|\gamma|\leq m-1}$ with components in $L^1_{\rm loc}(\partial\Omega,\sigma)$, simply write 
$\dot{f}\in({\rm CC})_{m-1}$ to indicate that for every multi-index $\gamma\in\mathbb{N}^n_0$ with $|\gamma|\leq m-2$ and all $j,k\in\{1,\dots,n\}$ one has
\begin{equation}\label{CC}
f_\gamma \in L^1_{1,{\rm loc}}(\partial\Omega,\sigma)\,\,\text{ and }\,\,\partial_{\tau_{jk}}f_{\gamma}=\nu_jf_{\gamma+e_k}-\nu_kf_{\gamma+e_j}
\,\,\text{ at $\sigma$-a.e. point on }\partial\Omega.
\end{equation}

In the same setting, also define the space of Whitney arrays styled after $L_{\rm comp}^{1}$, the space of integrable functions with compact support, 
\begin{equation}\label{WAajasnfksjdnf}
{\rm WA}_{m-1}[L_{\rm comp}^{1}(\partial\Omega,\sigma)]:=\Big\{\dot{f}\in({\rm CC})_{m-1}:\,f_{\gamma}\in L_{\rm comp}^{1}(\partial\Omega,\sigma)
\,\,\text{ for each $|\gamma|\leq m-1$}\Big\}.
\end{equation}
\end{definition}

\begin{definition}\label{def:HWA-X}
Fix $n,m\in{\mathbb{N}}$ with $n\geq 2$. Let $\Omega\subseteq\mathbb{R}^{n}$ be an Ahlfors regular domain and 
abbreviate $\sigma:={\mathcal{H}}^{n-1}\lfloor\partial\Omega$. Also, let $\mathbb{X}\subseteq L^1_{\rm loc}(\partial\Omega,\sigma)$ be a normed vector space. 
In this setting, define the ${\mathbb{X}}$-{\tt based} {\tt homogeneous} {\tt Whitney} {\tt array} {\tt space}
\begin{align}\label{HWA.X}
{\rm HWA}_{m-1}\big[{\mathbb{X}}\big]:=\Big\{\dot{f}= & \{f_\gamma\}_{|\gamma|\leq m-1}\in({\rm CC})_{m-1}:\,
f_\gamma\in{\mathbb{X}}\,\,\text{ if }\,\,|\gamma|=m-1\,\text{ and}
\nonumber\\[0pt]
& f_\gamma\in L^1\Big(\partial\Omega,\frac{\sigma(x)}{1+|x|^{n+m-2-|\gamma|}}\Big)\,\,\text{ if $|\gamma|\leq m-2$}\Big\},
\end{align}
and equip it with the seminorm defined for each $\dot{f}\in{\rm HWA}_{m-1}\big[{\mathbb{X}}\big]$ by
\begin{equation}\label{eq:HWA-X-norm}
\|\dot{f}\|_{{\rm HWA}_{m-1}[{\mathbb{X}}]}:=\sum_{\gamma\in{\mathbb{N}}_0^n,\,|\gamma|=m-1}\|f_\gamma\|_{{\mathbb{X}}}.
\end{equation}

Setting, for each $\dot{f},\dot{g}\in{\rm HWA}_{m-1}\big[{\mathbb{X}}\big]$, 
\begin{equation}\label{eq:EQUIV-HWA}
\dot{f}\sim\dot{g}\,\overset{def}{\Longleftrightarrow}\,f_\gamma=g_\gamma\,\,\text{ whenever }\,\,|\gamma|=m-1
\end{equation}
defines an equivalence relation on ${\rm HWA}_{m-1}\big[{\mathbb{X}}\big]$.
For each $\dot{f}\in{\rm HWA}_{m-1}\big[{\mathbb{X}}\big]$ denote 
by $[\dot{f}]$ the equivalence class of $\dot{f}$, i.e., $[\dot{f}]:=\big\{\dot{g}\in{\rm HWA}_{m-1}\big[{\mathbb{X}}\big]:\,\dot{f}\sim\dot{g}\big\}$,
introduce the space of equivalence classes
\begin{equation}\label{eq:EQUIV-CLASS.DDD}
{\rm HWA}_{m-1}\big[{\mathbb{X}}\big]\big/\sim\,:=\Big\{[\dot{f}]:\,\dot{f}\in{\rm HWA}_{m-1}\big[{\mathbb{X}}\big]\Big\},
\end{equation}
and for each $\dot{f}=\{f_\gamma\}_{|\gamma|\leq m-1}\in{\rm HWA}_{m-1}\big[{\mathbb{X}}\big]$ {\rm (}unambiguously{\rm )} define
\begin{equation}\label{eq:EQUIV-CLASS.NNN}
\big\|[\dot{f}]\big\|_{{\rm HWA}_{m-1}[{\mathbb{X}}]/\sim}:=\sum_{\gamma\in{\mathbb{N}}_0^n,\,|\gamma|=m-1}\|f_\gamma\|_{{\mathbb{X}}}.
\end{equation}

Also consider ${\mathbb{X}}_1$-{\tt based} {\tt homogeneous} {\tt Whitney} {\tt array} {\tt space}
${\rm HWA}_{m-1}\big[{\mathbb{X}}_1\big]$ defined as above in relation to the Sobolev space
\begin{align}\label{INH.X1}
{\mathbb{X}}_1:=\Big\{f\in{\mathbb{X}}:\,\partial_{\tau_{jk}}f\in{\mathbb{X}}\,\text{ for }\,\,1\leq j,k\leq n\Big\},
\end{align}
equipped with the norm ${\mathbb{X}}_1\ni f\mapsto\|f\|_{{\mathbb{X}}_1}:=\|f\|_{\mathbb{X}}+\sum_{j,k=1}^n\|\partial_{\tau_{jk}}f\|_{\mathbb{X}}$.
\end{definition}

Given any $\ell\in{\mathbb{Z}}$, the symbol ${\mathcal{P}}_\ell$ is reserved for the space of all polynomials of degree $\leq\ell$ in $\mathbb{R}^n$ 
(with the understanding that ${\mathcal{P}}_\ell:=\{0\}$ if $\ell<0$). To formally define the Taylor polynomial functor (used in \eqref{eq:U-INT-REP-FORM}), 
fix a reference point $x_\ast\in{\mathbb{R}}^n$. For every smooth function $w$ defined in an open subset of ${\mathbb{R}}^n$ (containing $x_\ast$) then 
introduce\footnote{with the convention that a sum over an empty set of indices is interpreted as zero}
\begin{equation}\label{mxcvbprm3fdh.PP-INTRO}
{\mathbb{P}}_{\ell,x_\ast}[w](x):=
\sum_{|\eta|\leq\ell}\frac{1}{\eta!}(x-x_\ast)^{\eta}\big(\partial^{\eta}w\big)(x_\ast)
\,\text{ at each }\,x\in\mathbb{R}^{n}. 
\end{equation}
Thus, ${\mathbb{P}}_{\ell,x_\ast}[w]$ is simply the $\ell$-th order Taylor polynomial of $w$ (with base point $x_\ast$).
Consequently, ${\mathbb{P}}_{\ell,x_*}$ reproduces ${\mathcal{P}}_\ell$, i.e.,
\begin{equation}\label{eq:P-reproduces}
{\mathbb{P}}_{\ell,x_*}[P]=P\,\,\,\text{ for all }\,P\in{\mathcal{P}}_\ell.
\end{equation}
Also, Leibniz formula shows that for any two sufficiently 
smooth functions $w_1,w_2$ we have
\begin{align}\label{eq:leibniz-P}
{\mathbb{P}}_{\ell,x_*}[w_1w_2](x) 
=\sum_{|a|+|b|\leq\ell}\frac{(x-x_*)^a}{a!}(\partial^a w_1)(x_*)\frac{(x-x_*)^b}{b!}(\partial^b w_2)(x_*)\,\text{ for all }\,x\in{\mathbb{R}}^n.
\end{align}
Together with \eqref{eq:P-reproduces}, this shows that for every $P\in{\mathcal{P}}_\ell$, smooth $w$, and $x\in{\mathbb{R}}^n$ we have
\begin{equation}\label{eq:2bis}
{\mathbb{P}}_{\ell,x_*}[wP](x)=w(x_*)\,P(x)+\sum_{\substack{|a|+|b|\leq\ell\\ |a|>0}}
\frac{(x-x_*)^a}{a!}(\partial^a w)(x_*)\frac{(x-x_*)^b}{b!}(\partial^b P)(x_*).
\end{equation}

Retaining the geometric setting of Definition~\ref{def:HWA-X}, if for each $\ell\in{\mathbb{Z}}$ we define
\begin{equation}\label{eq:dot-P}
\dt{\mathcal{P}}_\ell:=\big\{\dot{P}=\big\{(\partial^\gamma P)\big|_{\partial\Omega}\big\}_{|\gamma|\leq m-1}:\,P\in{\mathcal{P}}_\ell\big\}
\end{equation}
then, as is apparent from definitions,
\begin{equation}\label{eq:incl}
\dt{\mathcal{P}}_{m-2}\subseteq{\rm HWA}_{m-1}\big[{\mathbb{X}}\big]\,\text{ and }\,\|\dot{f}\|_{{\rm HWA}_{m-1}[{\mathbb{X}}]}=0
\,\text{ for each }\,\dot{f}\in\dt{\mathcal{P}}_{m-2}.
\end{equation}

\section{Whitney arrays styled after the Hierarchy of Good Spaces}
\label{Sec:HGS}

We begin by recalling a number of definitions and results from \cite[\S5.1]{GHA.II} and \cite[\S9.1]{MiMiMiTa16}.

\begin{definition}\label{CIgsgC.Gad-DEF}
Let $({\mathscr{X}},\mathfrak{M},\mu)$ be a sigma-finite measure space and recall that $\mathscr{M}({\mathscr{X}},\mu)$ stands for 
the collection of all scalar-valued $\mu$-measurable functions on ${\mathscr{X}}$. A {\tt Generalized} {\tt Banach} {\tt Function} {\tt Space} 
{\rm (GBFS for short)} on ${\mathscr{X}}$ is a linear subspace ${\mathbb{X}}$ of ${\mathscr{M}}({\mathscr{X}},\mu)$ 
equipped with a norm $\|\cdot\|_{\mathbb{X}}$ satisfying the following three properties{\rm :}
\begin{enumerate}
\item[(1)] $[${\tt Order} {\tt Ideal}/{\tt Lattice} {\tt Property}$]$ If the functions $f\in{\mathbb{X}}$ and $g\in{\mathscr{M}}({\mathscr{X}},\mu)$ satisfy 
$|g|\leq|f|$ at $\mu$-a.e. point in ${\mathscr{X}}$ then $g\in{\mathbb{X}}$ and $\|g\|_{\mathbb{X}}\leq\|f\|_{\mathbb{X}}$.
\item[(2)] $[${\tt Fatou} {\tt Property}$]$ If the sequence $\{f_j\}_{j\in{\mathbb{N}}}\subseteq{\mathbb{X}}$ and the function 
$f\in{\mathscr{M}}({\mathscr{X}},\mu)$ satisfy $0\leq f_j\nearrow f$ pointwise $\mu$-a.e. on ${\mathscr{X}}$ as $j\to\infty$ and 
$\sup\limits_{j\in{\mathbb{N}}}\|f_j\|_{\mathbb{X}}<\infty$ it follows that $f\in{\mathbb{X}}$ and 
$\|f\|_{\mathbb{X}}=\sup\limits_{j\in{\mathbb{N}}}\|f_j\|_{\mathbb{X}}$.
\item[(3)] $[${\tt Richness} {\tt Property}$]$ There exists a sequence of sets $\{X_j\}_{j\in{\mathbb{N}}}\subseteq{\mathfrak{M}}$
with the property that $\bigcup_{j\in{\mathbb{N}}}X_j={\mathscr{X}}$ and ${\mathbf{1}}_{X_j}\in{\mathbb{X}}$ for every 
$j\in{\mathbb{N}}$.
\end{enumerate}
\end{definition}

If the condition imposed in item {\rm (3)} above is strengthened to the demand that ${\mathbf{1}}_{E}\in{\mathbb{X}}$ for every 
$E\in{\mathfrak{M}}$ with $\mu(E)<\infty$, then the space $\mathbb{X}$ is called a (classical) Banach function space on ${\mathscr{X}}$.
The latter concept appears in, e.g., \cite[Definition~1.1, p.\,2]{BeSha}, and is very popular in the mathematical literature. 
However, for the goals we have in mind, this is prohibitively restrictive since it excludes some basic function spaces, such as  
Muckenhoupt weighted Lebesgue spaces.

The associated space of a Generalized Banach Function Space ${\mathbb{X}}$ on ${\mathscr{X}}$, also known as K\"othe dual, is denoted by 
$\mathbb{X}'$ and consists of those measurable functions that pair ``well'' (in an integral sense on ${\mathscr{X}}$) with all functions 
in ${\mathbb{X}}$:
\begin{equation}\label{assoc_norm}
\mathbb{X}':=\Big\{g\in\mathscr{M}({\mathscr{X}},\mu):\,\int_{\mathscr{X}}|fg|\,d\mu<\infty\,\text{ for each }\,f\in\mathbb{X}\Big\}.
\end{equation}
If for each $g\in\mathbb{X}'$ one defines
\begin{equation}\label{assoc_norm.NORM}
\|g\|_{\mathbb{X}'}:=\sup\Big\{\int_{\mathscr{X}}|fg|\,d\mu:\,f\in\mathbb{X},\;\|f\|_{\mathbb{X}}\leq 1\Big\}, 
\end{equation}
then $\big(\mathbb{X}',\|\cdot\|_{\mathbb{X}'}\big)$ is itself a Generalized Banach Function Space on ${\mathscr{X}}$ (cf. \cite[\S5.1]{GHA.II}).

Any given Ahlfors regular set $\Sigma\subseteq{\mathbb{R}}^n$ becomes a measure space when equipped with the measure $\sigma:={\mathcal{H}}^{n-1}\lfloor\Sigma$. 
In such a setting, it also makes sense to consider the Hardy-Littlewood operator $\mathcal{M}_\Sigma$, acting on each $\sigma$-measurable function 
$f:\Sigma\to{\mathbb{C}}$ via
\begin{equation}\label{eq:HL_MaxOP}
(\mathcal{M}_\Sigma f)(x):=\sup_{r>0}\fint_{\Sigma\cap B(x,r)}|f|\,d\sigma\,\,\text{ at each point }\,\,x\in\Sigma.
\end{equation}
A Generalized Banach Function Space ${\mathbb{X}}$ on $\Sigma$ is called {\rm HA}-{\tt friendly} (harmonic-analysis-friendly) provided 
the Hardy-Littlewood operator $\mathcal{M}_\Sigma$ is bounded both on ${\mathbb{X}}$ and its K\"othe dual ${\mathbb{X}}'$.
As formalized in the definition below, which was first introduced in \cite{MMM.HOPT}, the class of {\rm HA}-friendly {\rm GBFS} is part of the foundational layer of an inductive construction that produces, through sums, intersections, completions, and interpolation, an 
infinite hierarchy of successive layers. The functional-analytic operations used to move from one level to the next are specifically 
chosen to interact naturally with the central objects of this framework, namely Calder\'on-Zygmund integral operators. Indeed, these 
operators are known to act boundedly on Muckenhoupt weighted Lebesgue spaces, and this stability extends, by means of extrapolation, 
to {\rm HA}-friendly {\rm GBFS} (see \cite[\S5.2]{GHA.II}). The inductive procedure then propagates these mapping properties throughout 
the entire hierarchy generated by the framework.

\begin{definition}\label{DEF.HGS}
Fix $n\in{\mathbb{N}}$ with $n\geq 2$. Let $\Sigma\subseteq\mathbb{R}^n$ be a closed Ahlfors regular set and abbreviate 
$\sigma:=\mathcal{H}^{n-1}\lfloor\Sigma$. Define the {\tt hierarchy} {\tt of} {\tt good} {\tt spaces} {\tt on} $\Sigma$ as the union 
\begin{equation}\label{eq:LEVELS}
\bigcup_{k=1}^\infty{\mathscr{L}}_k
\end{equation}
of {\tt levels} ${\mathscr{L}}_1,{\mathscr{L}}_2,\dots$ defined inductively as follows. First, ${\mathscr{L}}_1$ is taken to be the collection of 
Generalized Banach Function Spaces ${\mathbb{X}}$ on $\Sigma$ which are {\rm HA}-friendly and their ``cores,'' i.e., 
\begin{equation}\label{eq:TYRFFRFDGF8g56.iii}
\mathring{\mathbb{X}}:=\overline{L^\infty_{\rm comp}(\Sigma,\sigma)}^{\|\cdot\|_{\mathbb{X}}}.
\end{equation}
Next, assuming ${\mathscr{L}}_k$ has been already defined for some $k\in{\mathbb{N}}$, take ${\mathscr{L}}_{k+1}$ to be the collection 
\begin{align}\label{eq:LEVELS.2}
{\mathscr{L}}_{k+1}:=\Big\{{\mathbb{X}}+{\mathbb{Y}},\,\,\,{\mathbb{X}}\cap{\mathbb{Y}},\,\,\,
\overline{\mathbb{X}\cap\mathbb{Y}}{}^{\|\cdot\|_{\mathbb{X}}},\,\,\,({\mathbb{X}},{\mathbb{Y}})_{\theta,q}:\,
{\mathbb{X}},{\mathbb{Y}}\in{\mathscr{L}}_k,\,\,\,\theta\in(0,1),\,\,\,q\in[1,\infty]\Big\},
\end{align}
where ${\mathbb{X}}+{\mathbb{Y}}$ and ${\mathbb{X}}\cap{\mathbb{Y}}$ are equipped with the usual norm ``sum'' and ``intersection'' norms, 
the space $\overline{\mathbb{X}\cap\mathbb{Y}}{}^{\|\cdot\|_{\mathbb{X}}}$ is equipped with the norm inherited from ${\mathbb{X}}$, and 
$({\mathbb{X}},{\mathbb{Y}})_{\theta,q}$, the intermediate space produced by the real interpolation method, is equipped with the usual norm 
induced by Peetre's {\rm K}-functional {\rm (}cf., e.g., \cite{BeLo76}{\rm )}.
\end{definition}

We now record a key result concerning the functional analytic properties of the brand of spaces of 
homogeneous Whitney arrays introduced in Definition~\ref{def:HWA-X}. Its proof in \cite{MMM.HOPT} makes use
of the full force of the Calder\'on-Zygmund theory for multi-layer potential operators in {\rm NTA} domains with 
Ahlfors regular boundaries developed there. 

\begin{theorem}\label{thm:HWA-X}
Fix $n,m\in{\mathbb{N}}$ with $n\geq 2$. Let $\Omega\subseteq\mathbb{R}^{n}$ be a two-sided {\rm NTA} domain with an unbounded 
Ahlfors regular boundary, and abbreviate $\sigma:={\mathcal{H}}^{n-1}\lfloor\partial\Omega$. Also, let ${\mathbb{X}}$ be a normed 
space belonging to the hierarchy of good spaces on $\partial\Omega$. Then the following properties are true{\rm :}

\vskip 0.08in
(1) For each $\dot{f}\in{\rm HWA}_{m-1}\big[{\mathbb{X}}\big]$ one has 
$\|\dot{f}\|_{{\rm HWA}_{m-1}[{\mathbb{X}}]}=0$ if and only if $\dot{f}\in\dt{\mathcal{P}}_{m-2}$. 
In addition, for each $\dot{f},\dot{g}\in{\rm HWA}_{m-1}\big[{\mathbb{X}}\big]$ one has
$\dot{f}\sim\dot{g}$ if and only if $\dot{f}-\dot{g}\in\dt{\mathcal{P}}_{m-2}$.
In particular, ${\rm HWA}_{m-1}\big[{\mathbb{X}}\big]\big/\sim\,\,=\,{\rm HWA}_{m-1}\big[{\mathbb{X}}\big]\Big/\dt{\mathcal{P}}_{m-2}$.

\vskip 0.08in
(2) The quotient space $\big({\rm HWA}_{m-1}\big[{\mathbb{X}}\big]\big/\sim\,,\,\|\cdot\|_{{\rm HWA}_{m-1}[{\mathbb{X}}]/\sim}\big)$ 
is complete, hence Banach. Likewise, the quotient space 
$\big({\rm HWA}_{m-1}\big[{\mathbb{X}}_1\big]\big/\sim\,,\,\|\cdot\|_{{\rm HWA}_{m-1}[{\mathbb{X}}_1]/\sim}\big)$ 
is also Banach.
\end{theorem}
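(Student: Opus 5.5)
Part (1) reduces to a uniqueness statement for Whitney arrays, and part (2) to completeness of the top-order components together with a reconstruction of the lower-order ones.

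For part (1), the direction ``$\dot f\in\dt{\mathcal{P}}_{m-2}\Rightarrow\|\dot f\|=0$'' is already recorded in \eqref{eq:incl}. For the converse, suppose $\dot f\in{\rm HWA}_{m-1}[{\mathbb{X}}]$ has $f_\gamma=0$ for all $|\gamma|=m-1$. The plan is a downward induction on $|\gamma|$ through the compatibility conditions \eqref{CC}.
\begin{enumerate}
\item For $|\gamma|=m-2$, \eqref{CC} gives $\partial_{\tau_{jk}}f_\gamma=0$ for all $j,k$.
\item On a two-sided NTA domain with Ahlfors regular boundary, $\partial\Omega$ is connected. Vanishing of all tangential derivatives of an $L^1_{1,{\rm loc}}$ function then forces $f_\gamma$ to be constant, say $f_\gamma=c_\gamma$. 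I expect to get this either from a boundary Poincaré inequality (available since such boundaries support one) or by extending $f_\gamma$ via a harmonic double-layer-type extension.
\item At the next level down, the data are now $\partial_{\tau_{jk}}f_\gamma=\nu_j c_{\gamma+e_k}-\nu_k c_{\gamma+e_j}$. These are exactly the tangential derivatives of the boundary trace of the degree-$1$ polynomial $\sum_j c_{\gamma+e_j}x_j$, so $f_\gamma$ minus that trace has vanishing tangential derivatives and is again constant.
\item Iterating, one builds $P\in{\mathcal{P}}_{m-2}$ with $\dot f=\dot P$. The top-order entries match automatically, since $\partial^\gamma P=0$ for $|\gamma|=m-1$.
\end{enumerate}
The equivalence-relation statement follows by applying this to $\dot f-\dot g$, which is linear and stays in $({\rm CC})_{m-1}$. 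The identification of quotients is then immediate.

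For part (2), take a Cauchy sequence of classes $[\dot f^{(i)}]$. Since ${\mathbb{X}}$ is complete (all spaces in the hierarchy are Banach), the top components satisfy $f^{(i)}_\gamma\to f_\gamma$ in ${\mathbb{X}}$ for $|\gamma|=m-1$. The task is to produce lower-order components $f_\gamma$, $|\gamma|\leq m-2$, lying in the weighted $L^1$ spaces of \eqref{HWA.X} and satisfying \eqref{CC}.

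My plan is to reconstruct lower-order entries from top-order ones by a normalized integration operator. Concretely, I would use the multi-layer potential theory of \cite{MMM.HOPT}: solve the ${\mathbb{X}}$-Dirichlet-type problem, or use a Whitney extension $u$ with $\mathcal{N}(\nabla^{m-1}u)$ controlled in ${\mathbb{X}}$. I would then normalize by subtracting the Taylor polynomial ${\mathbb{P}}_{m-2,x_\ast}$ at the exterior reference point and take nontangential traces of $\partial^\gamma u$. This should give a bounded linear right-inverse
\[
R:\{(f_\gamma)_{|\gamma|=m-1}\text{ admissible}\}\to{\rm HWA}_{m-1}[{\mathbb{X}}],
\]
with the lower-order traces controlled in the weighted $L^1$ norms by the ${\mathbb{X}}$-norm of the top entries. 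The required estimate is a fractional-integration-type bound using $\sigma$-integrability of $(1+|x|)^{-(n-1)-\epsilon}$ times ${\mathbb{X}}'$-type functions.

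Applying $R$ to the sequence, $R(\text{top}(\dot f^{(i)}))$ is Cauchy in a genuine normed sense, including the lower entries. Passing to the limit in $L^1_{\rm loc}$ preserves \eqref{CC}, since tangential derivatives are defined distributionally and are continuous under $L^1_{\rm loc}$ convergence. This yields a limit array, and its class is the limit of the sequence. For ${\mathbb{X}}_1$ the same argument applies; the additional tangential derivatives converge in ${\mathbb{X}}$ and the distributional identities pass to the limit.

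The main obstacle is the construction and weighted boundedness of $R$: showing that the lower-order components recovered from top-order data in ${\mathbb{X}}$ lie in $L^1(\partial\Omega,\sigma/(1+|x|^{n+m-2-|\gamma|}))$ with norm bounds. This is where the Calderón–Zygmund theory for multi-layers on NTA domains and the unboundedness of $\partial\Omega$ (to fix normalization at $x_\ast$) enter.
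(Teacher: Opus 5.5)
Your Part (1) is fine in outline. The downward induction through \eqref{CC}, which peels off a homogeneous polynomial at each level, is a legitimate and fairly elementary route. One caveat: connectedness of $\partial\Omega$ is not what makes the constancy lemma work. For the exterior of two closed tangent balls the boundary is connected, yet the indicator of one sphere has $\partial_{\tau_{jk}}=0$ for all $j,k$, by the divergence theorem in that ball. What does the work is the two-sided NTA geometry, either through a Poincar\'e inequality on $\partial\Omega$ or through a double-layer argument using that both $\Omega_+$ and $\Omega_-$ are connected.

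The genuine gap is in Part (2). Completeness amounts exactly to showing that the ${\mathbb{X}}$-limit $h$ of the top parts $h^{(i)}:=(f^{(i)}_\gamma)_{|\gamma|=m-1}$ is again the top part of some array in ${\rm HWA}_{m-1}[{\mathbb{X}}]$. You delegate precisely this to a bounded right-inverse $R$ that is never constructed, and none of your suggested routes produces it.
\begin{itemize}
\item The ${\mathbb{X}}$-Dirichlet problem is not available here, since no smallness of $\|\nu\|_{{\rm BMO}(\partial\Omega,\sigma)}$ is assumed. Moreover, in this paper its solvability is deduced from the very completeness at stake (the Neumann series behind \eqref{Adwsfgg}), so this route is circular.
\item A one-sided trace of $\dot{\mathfrak{D}}_{\Delta^m}$ yields arrays whose top part is $\tfrac12 h$ plus the principal-value integrals in \eqref{mainPV}, not $h$. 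Inverting that again requires small BMO.
\item A Whitney-type extension with $\mathcal{N}_\kappa(\nabla^{m-1}u)$ controlled in ${\mathbb{X}}$ on such rough domains is nowhere available, and would itself be a substantial result.
\item The weighted bounds on the lower entries are left unproved.
\end{itemize}
Note also that such a bounded $R$ is, a posteriori, a consequence of the theorem (open mapping theorem on a suitably normalized subspace). Making it your first step means proving something stronger than the statement.

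No quantitative right-inverse is needed. By the very definition of $\sim$, the quotient is isometric to the set of admissible top parts in ${\mathbb{X}}^N$, so it suffices to show that this set is closed. This can be done qualitatively with two-sided jumps.
\begin{enumerate}
\item By Theorem~\ref{Jksjadfhsgh}, in both $\Omega_+$ and $\Omega_-$ the field $\nabla^{m-1}\dot{\mathfrak{D}}_{\Delta^m}\dot f^{(i)}$ is given by the integrals in \eqref{Sadgdsfdfga}. These involve only $h^{(i)}$ and, by \eqref{MKJydxd}, converge locally uniformly with all derivatives to the same integrals with $h$ in place of $h^{(i)}$.
\item Subtract from $\dot{\mathfrak{D}}_{\Delta^m}\dot f^{(i)}$ its Taylor polynomial of order $m-2$ at a fixed point of the connected open set $\Omega_\pm$, and pass to the limit. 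This produces $u_\pm\in\mathscr{C}^\infty(\Omega_\pm)$ whose $\nabla^{m-1}$ equals these limit integrals.
\item The Calder\'on--Zygmund results used in \eqref{Sfdfe}--\eqref{fwergerg}, applied on each side, give $\mathcal{N}_\kappa(\nabla^{m-1}u_\pm)\in{\mathbb{X}}$ and a.e.\ existence of the top-order traces.
\item Apply Theorem~\ref{thm:HWA-X.aaa} in $\Omega_+$ and in $\Omega_-$. The latter is also two-sided NTA with the same boundary, and \eqref{CC} is invariant under $\nu\mapsto-\nu$. This places ${\rm Tr}^{{}^{\kappa\text{\rm -nt}}}_{m-1}u_\pm$ in ${\rm HWA}_{m-1}[{\mathbb{X}}]$, with the lower-order weighted integrability included and no quantitative bound required.
\item The jump formula of \cite[Theorem~5.2.2]{GHA.IV}, combined with \eqref{eq:ugt-igFRD}, shows that ${\rm Tr}^{{}^{\kappa\text{\rm -nt}}}_{m-1}u_+-{\rm Tr}^{{}^{\kappa\text{\rm -nt}}}_{m-1}u_-$ has top part exactly $h$.
\end{enumerate}
The ${\mathbb{X}}_1$ case is identical, using the bound behind \eqref{Ttilde-Nontangnbdds} and item (2) of Theorem~\ref{thm:HWA-X.aaa}. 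For comparison, the paper itself does not prove this theorem; it defers to \cite{MMM.HOPT}, saying only that the argument rests on this multi-layer Calder\'on--Zygmund machinery.
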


The theorem below, proved in \cite{MMM.HOPT}, contains boundary multi-trace results involving the spaces of homogeneous Whitney arrays 
from Definition~\ref{def:HWA-X}. This shows that the spaces of boundary data in the higher-order Dirichlet and Regularity problems in 
Theorem~\ref{DInRBVP-tk-INTRO} are optimal. 

\begin{theorem}\label{thm:HWA-X.aaa}
Fix integers $n,m\in\mathbb{N}$ with $n\geq 2$. Let $\Omega\subseteq\mathbb{R}^{n}$ be a two-sided {\rm NTA} domain whose boundary is 
unbounded and Ahlfors regular. Define $\sigma:=\mathcal{H}^{n-1}\lfloor\partial\Omega$ and pick an aperture parameter $\kappa\in(0,\infty)$. 
Also, let ${\mathbb{X}}$ be a normed space belonging to the hierarchy of good spaces on $\partial\Omega$. Finally, select a function 
$u\in\mathscr{C}^{\infty}(\Omega)$. Then the following properties are true.

(1) Under the assumption that  
\begin{equation}\label{jgjg3434.akshg}
\begin{array}{c}
\mathcal{N}_{\kappa}(\nabla^{m-1}u)\in\mathbb{X}\,\,\text{ and $(\partial^\gamma u)\big|_{\partial\Omega}^{{}^{\kappa\text{\rm -nt}}}$ 
exists $\sigma$-a.e. on $\partial\Omega$}
\\[4pt]
\text{for each multi-index $\gamma\in{\mathbb{N}}_0^n$ of length $|\gamma|=m-1$},
\end{array}
\end{equation}
one concludes that the nontangential trace $(\partial^\gamma u)\big|_{\partial\Omega}^{{}^{\kappa\text{\rm -nt}}}$ exists 
at $\sigma$-a.e. point on $\partial\Omega$ for each multi-index $\gamma\in{\mathbb{N}}_0^n$ of length $|\gamma|\leq m-1$, one has the membership
\begin{equation}\label{eq:TRACE.mmm}
{\rm Tr}^{{}^{\kappa\text{\rm -nt}}}_{m-1}u:=\Big\{(\partial^\gamma u)\big|_{\partial\Omega}^{{}^{\kappa\text{\rm -nt}}}\Big\}_{|\gamma|\leq m-1}
\in{\rm HWA}_{m-1}\big[{\mathbb{X}}\big],
\end{equation}
and there exists some constant $C=C(\Omega,{\mathbb{X}})\in(0,\infty)$ independent of $u$ for which 
\begin{equation}\label{eq:TRACE.mmm.EST}
\Big\|{\rm Tr}^{{}^{\kappa\text{\rm -nt}}}_{m-1}u\Big\|_{{\rm HWA}_{m-1}[{\mathbb{X}}]}
\leq C\big\|\mathcal{N}_{\kappa}(\nabla^{m-1}u)\big\|_{\mathbb{X}}.
\end{equation}

(2) If in place of \eqref{jgjg3434.akshg} one now assumes that $\mathcal{N}_{\kappa}(\nabla^{m-1}u)\in\mathbb{X}$ and 
$\mathcal{N}_{\kappa}(\nabla^{m}u)\in\mathbb{X}$, then the nontangential trace $(\partial^\gamma u)\big|_{\partial\Omega}^{{}^{\kappa\text{\rm -nt}}}$ 
exists at $\sigma$-a.e. point on $\partial\Omega$ for each multi-index $\gamma\in{\mathbb{N}}_0^n$ of length $|\gamma|\leq m-1$, one has the membership
${\rm Tr}^{{}^{\kappa\text{\rm -nt}}}_{m-1}u\in{\rm HWA}_{m-1}\big[{\mathbb{X}_{1}}\big]$,
and there exists some constant $C=C(\Omega,{\mathbb{X}})\in(0,\infty)$ independent of $u$ with the property that
\begin{equation}\label{eq:TRACE.mmm.EST-X1}
\Big\|{\rm Tr}^{{}^{\kappa\text{\rm -nt}}}_{m-1}u\Big\|_{{\rm HWA}_{m-1}[{\mathbb{X}}_1]}
\leq C\big\|\mathcal{N}_{\kappa}(\nabla^{m-1}u)\big\|_{\mathbb{X}}+C\big\|\mathcal{N}_{\kappa}(\nabla^{m}u)\big\|_{\mathbb{X}}.
\end{equation}
\end{theorem}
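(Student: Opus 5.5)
We argue by induction on the order of the derivatives, descending from $|\gamma|=m-1$ to $|\gamma|=0$. The idea is to recover lower-order traces from higher-order ones by integrating along nontangential paths. This uses that a two-sided NTA domain is uniformly connected, with Harnack chains inside nontangential cones.

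\textbf{Existence of traces.} For item (1), fix $\gamma$ with $|\gamma|=m-2$. Since $\mathcal{N}_\kappa(\nabla^{m-1}u)\in\mathbb{X}\subseteq L^1_{\rm loc}$, the function $\mathcal{N}_\kappa(\nabla^{m-1}u)$ is finite $\sigma$-a.e. Hence $\nabla(\partial^\gamma u)$ is bounded in $\Gamma_\kappa(x)$ for $\sigma$-a.e. $x$. We then invoke the standard Fatou-type lemma for Ahlfors regular NTA domains: if $w\in\mathscr{C}^1(\Omega)$ and $\mathcal{N}_\kappa(\nabla w)$ is locally integrable (or finite a.e.), then $w|^{\kappa\text{-nt}}_{\partial\Omega}$ exists $\sigma$-a.e. and $\mathcal{N}_\kappa w\in L^1_{\rm loc}$. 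Concretely, the corkscrew and Harnack-chain conditions let one join points of $\Gamma_\kappa(x)$ near $x$ by paths of length comparable to their distance to $x$. Along such paths $w$ is Lipschitz with constant at most $\mathcal{N}_{\kappa'}(\nabla w)(x)$, for a slightly larger aperture $\kappa'$. The aperture independence of $L^1_{\rm loc}$ finiteness of $\mathcal{N}$ in the good-space setting then gives the Cauchy criterion. Iterating down to $|\gamma|=0$ yields all traces.

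\textbf{Compatibility conditions.} To obtain $({\rm CC})_{m-1}$, apply the tangential integration-by-parts formula with nontangential traces from \cite{GHA.I}. For $w=\partial^\gamma u$ with $\mathcal{N}_\kappa w,\ \mathcal{N}_\kappa(\nabla w)\in L^1_{\rm loc}$ and traces existing, that formula gives $\partial_{\tau_{jk}}(w|^{\kappa\text{-nt}}_{\partial\Omega})=\nu_j(\partial_k w)|^{\kappa\text{-nt}}_{\partial\Omega}-\nu_k(\partial_j w)|^{\kappa\text{-nt}}_{\partial\Omega}$. This is exactly \eqref{CC} with $f_{\gamma+e_k}=(\partial^{\gamma+e_k}u)|^{\kappa\text{-nt}}_{\partial\Omega}$.

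\textbf{Membership and estimates.} The top-order estimate $\|f_\gamma\|_{\mathbb{X}}\le\|\mathcal{N}_\kappa(\nabla^{m-1}u)\|_{\mathbb{X}}$ for $|\gamma|=m-1$ follows from the pointwise bound $|f_\gamma|\le\mathcal{N}_\kappa(\partial^\gamma u)$ and the lattice property. This gives \eqref{eq:TRACE.mmm.EST}, since the seminorm only involves top-order components.

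The remaining issue is the weighted integrability $f_\gamma\in L^1(\partial\Omega,\sigma/(1+|x|^{n+m-2-|\gamma|}))$ for $|\gamma|\le m-2$, and this is the main obstacle. The plan is to reduce it to a growth bound on $\mathcal{N}_\kappa(\nabla^{m-1}u)$. First, since $\mathbb{X}$ lies in the hierarchy of good spaces, one has $\mathbb{X}\hookrightarrow L^1(\partial\Omega,\sigma/(1+|x|)^{n-1+\varepsilon})$ for every $\varepsilon>0$. This follows level by level: for HA-friendly GBFS, from $\mathcal{M}_{\partial\Omega}$ acting on $\mathbb{X}'$ applied to a bump at the origin, whose maximal function decays like $(1+|x|)^{1-n}$, and one then propagates through sums, intersections, closures and interpolation. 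Next, we integrate from a fixed reference point along paths in $\Omega$ furnished by the NTA structure, using the unboundedness of $\partial\Omega$. This bounds $|f_\gamma(x)|$ by $C(1+|x|)^{m-1-|\gamma|}$ times an averaged/maximal quantity of $\mathcal{N}_\kappa(\nabla^{m-1}u)$. Plugging this into the weight $(1+|x|)^{-(n+m-2-|\gamma|)}$ leaves the integrability exponent $n-1$, and the needed extra decay is obtained via a dyadic-annulus decomposition. If that path estimate proves too delicate, the fallback is to modify the lower-order entries by a polynomial in $\dt{\mathcal{P}}_{m-2}$, which does not change the seminorm.

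Item (2) is the same argument one order higher, with $\mathcal{N}_\kappa(\nabla^m u)\in\mathbb{X}$ giving $\partial_{\tau_{jk}}f_\gamma\in\mathbb{X}$ through the formula above, hence $f_\gamma\in\mathbb{X}_1$ together with \eqref{eq:TRACE.mmm.EST-X1}.
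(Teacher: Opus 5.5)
The paper does not prove this statement; it quotes it from \cite{MMM.HOPT}. Your proposal therefore has to stand on its own, and it has a genuine gap.

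The existence of the lower-order traces, the compatibility conditions, and the top-order estimate \eqref{eq:TRACE.mmm.EST} are essentially routine along the lines you describe, modulo point (a) below. The one substantive part of the membership \eqref{eq:TRACE.mmm} is $f_\gamma\in L^1\big(\partial\Omega,\sigma(x)/(1+|x|^{n+m-2-|\gamma|})\big)$ for $|\gamma|\leq m-2$. You leave this unproved, and your plan for it does not close, for two reasons.

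\textbf{The embedding exponent.} These weights are calibrated exactly to a Hardy-type inequality whose input is the \emph{sharp} embedding $\mathbb{X}\hookrightarrow L^1\big(\partial\Omega,\sigma(x)/(1+|x|^{n-1})\big)$. This is \eqref{MKJydxd}, taken from \cite[Proposition~2.48]{MMM.HOPT}. Starting from your embedding with exponent $n-1+\varepsilon$, the best any Hardy argument can give is integrability against $(1+|x|)^{-(n+m-2-|\gamma|+\varepsilon)}$. No dyadic decomposition recovers the lost $\varepsilon$. Your own remark that $\mathcal{M}_{\partial\Omega}\mathbf{1}_{\Delta}\approx(1+|x|)^{1-n}$ lies in $\mathbb{X}'$ already gives the sharp exponent, so that is what must be used.

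\textbf{Compressing the path integral.} Writing the bound as ``$(1+|x|)$ times a maximal quantity'' loses too much. Take $|\gamma|=m-2$ and $g:=\mathcal{N}_{\kappa'}(\nabla^{m-1}u)$. The natural NTA-path bound is $|f_\gamma(x)-f_\gamma(x_0)|\lesssim|x-x_0|\big(\mathcal{M}_{\partial\Omega}g(x)+\mathcal{M}_{\partial\Omega}g(x_0)\big)$. Its term $|x-x_0|\,\mathcal{M}_{\partial\Omega}g(x_0)$ leads to $\int_{\partial\Omega}(1+|x|)^{1-n}\,d\sigma(x)=+\infty$, since $\sigma(B(0,R))\approx R^{n-1}$. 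You have to keep the scale-by-scale structure instead:
\begin{itemize}
\item Along corkscrew points $A_j$ at scales $2^j$ above $x_0$, the increments of $\partial^\gamma u$ are $\lesssim 2^j\fint_{\Delta(x_0,C2^j)}g\,d\sigma$.
\item For $|x-x_0|\approx 2^k$, the descent into $\Gamma_\kappa(x)$ costs $\lesssim\sum_{r=2^i\leq 2^k}r\fint_{\Delta(x,Cr)}g\,d\sigma$.
\item Integrate both against $(1+|x|)^{-n}$ over dyadic annuli. Fubini plus a discrete Hardy summation then controls everything by $\int_{\partial\Omega}g(y)\,(1+|y|)^{1-n}\,d\sigma(y)$.
\item For $|\gamma|\leq m-3$, iterate this on interior values of $\nabla^{k}u$, along such chains and in truncated cones, expressed through $g$ itself.
\end{itemize}
None of this mechanism is in your sketch.

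Three further points.

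(a) Over the full, unbounded cones your Fatou-type lemma is false as stated. In the upper half-plane take $u={\rm Re}\,(z+i)^{1-a}$ with $1/p<a<1$. Then $u$ is harmonic and $\mathcal{N}_\kappa(\nabla u)\approx(1+|x|)^{-a}\in L^p(\mathbb{R})$, yet $u\to+\infty$ up every cone, so $\mathcal{N}_\kappa u\equiv+\infty$. Hence trace existence and the compatibility conditions must be argued with truncated maximal functions. That is harmless, since these are local statements. It also means the lower-order induction cannot pass through $\mathcal{N}_\kappa(\nabla^{k}u)$ for $k\leq m-2$.

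(b) The polynomial fallback is vacuous. By \eqref{eq:incl}, $\dot{\mathcal{P}}_{m-2}\subseteq{\rm HWA}_{m-1}[\mathbb{X}]$, so subtracting a polynomial array cannot repair a failure of weighted integrability. In any case the theorem concerns the actual trace array.

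(c) In item (2) you use, for $|\gamma|=m-1$, the identity $\partial_{\tau_{jk}}f_\gamma=\nu_j(\partial^{\gamma+e_k}u)\big|^{\kappa\text{-nt}}_{\partial\Omega}-\nu_k(\partial^{\gamma+e_j}u)\big|^{\kappa\text{-nt}}_{\partial\Omega}$. This presupposes that the traces of $\nabla^m u$ exist. That is not assumed, and it is not automatic: $u$ is an arbitrary $\mathscr{C}^\infty(\Omega)$ function, so no Fatou theorem applies. You need an a.e. bound $|\partial_{\tau_{jk}}f_\gamma|\lesssim\mathcal{N}_{\kappa'}(\nabla^m u)$ obtained without those traces, combined with aperture independence of $\|\mathcal{N}_\kappa(\cdot)\|_{\mathbb{X}}$.
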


\section{The Multi-Layer Operator Associated with the Polylaplacian}
\label{Sec:Multilayer}

An elementary but useful observation is that if $n,m\in\mathbb{N}$ and $U$ is an open set in 
$\mathbb{R}^{n}$ then
\begin{equation}\label{HSjhagsdfjhgsgf}
\parbox{9.8cm}{for each harmonic function $u:U\to\mathbb{C}$, and each polynomial
$P\in\mathcal{P}_{m-1}$, the function $uP$ is a null-solution of $\Delta^{m}$ in $U$.}
\end{equation}
This follows by induction on $m\in\mathbb{N}$ relying on the identity 
$\Delta(uP)=2\nabla u \cdot \nabla P + u(\Delta P)$ in $U$.

\begin{definition}\label{Desidhgkjh} 
Fix $n,m\in\mathbb{N}$ with $n\geq 2$, and let $\omega_{n-1}$ denote the surface area of the unit sphere in $\mathbb{R}^{n}$. 
Let $\Omega\subseteq\mathbb{R}^{n}$ be an Ahlfors regular domain. Denote by $\nu$ its geometric measure theoretic outward unit normal
and abbreviate $\sigma:=\mathcal{H}^{n-1}\lfloor{\partial\Omega}$. In this setting, for each family $\dot{f}=\{f_{\gamma}\}_{|\gamma|\leq m-1}$ 
of complex-valued functions with the property that
\begin{equation}\label{Asdsdgsdgdfg}
f_\gamma\in L_{\rm comp}^{1}(\partial\Omega,\sigma)
\,\,\text{ for each $\gamma\in{\mathbb{N}}_0^n$ with $|\gamma|\leq m-1$}, 
\end{equation}
at each $x\in\mathbb{R}^{n}\setminus\partial\Omega$ define
\begin{equation}\label{Qlsdfpgjsdoigj}
(\dot{\mathcal{D}}_{\Delta^{m}}\dot{f})(x):=\frac{1}{\omega_{n-1}}\sum_{|\gamma|\leq m-1}
\int_{\partial\Omega}\frac{\nu(y)\cdot (y-x)}{|x-y|^{n}}\frac{(x-y)^{\gamma}}{\gamma!}f_{\gamma}(y)\,d\sigma(y).
\end{equation}
\end{definition}
We shall refer to $\dot{\mathcal{D}}_{\Delta^{m}}$ as the {\tt boundary}-{\tt to}-{\tt domain} 
{\tt multi}-{\tt layer} {\tt potential} {\tt operator} associated with the set $\Omega$ and the polylaplacian $\Delta^m$.
From \eqref{Qlsdfpgjsdoigj} and \eqref{HSjhagsdfjhgsgf} it follows that 
\begin{equation}\label{Multilayerprojsnf}
\begin{array}{c}
\dot{\mathcal{D}}_{\Delta^{m}}\dot{f}\,\,\text{ belongs to ${\mathscr{C}}^\infty(\mathbb{R}^{n}\setminus\partial\Omega)$, 
and $\Delta^{m}(\dot{\mathcal{D}}_{\Delta^{m}}\dot{f})=0$ in $\mathbb{R}^{n}\setminus\partial\Omega$, }
\\[2pt]
\text{for each family $\dot{f}=\{f_\gamma\}_{|\gamma|\leq m-1}$ of functions as in \eqref{Asdsdgsdgdfg}}.
\end{array}
\end{equation}
Corresponding to $m=1$ we have $\dot{\mathcal{D}}_{\Delta^1}={\mathcal{D}}_\Delta$, the classical boundary-to-domain 
harmonic double layer potential operator. Furthermore, the boundary-to-domain multi-layer potential operator associated with the 
polylaplacian $\Delta^{m}$ ``reproduces" polynomials of degree $\leq m-1$ in the precise manner detailed below. 

\begin{proposition}\label{MAahsifuhfiouh} 
Fix $n,m\in\mathbb{N}$, with $n\geq 2$. Let $\Omega\subseteq\mathbb{R}^{n}$ be an 
Ahlfors regular domain with compact boundary. Denote by $\nu$ its geometric measure-theoretic outward unit normal, and 
set $\sigma:=\mathcal{H}^{n-1}\lfloor{\partial\Omega}$. Then for each $P\in\mathcal{P}_{m-1}$ one has
\begin{equation}\label{Afhgfdsffgh}
\dot{\mathcal{D}}_{\Delta^{m}}\dot{P}=
\left\{
\begin{array}{@{}l@{\quad}l@{\qquad}l@{}}
P & \text{in } \Omega, & \text{if } \Omega \text{ is bounded},
\\[6pt]
0 & \text{in } \mathbb{R}^{n}\setminus\overline{\Omega}, & \text{if } \Omega \text{ is bounded},
\\[6pt]
0 & \text{in } \Omega, & \text{if } \Omega \text{ is unbounded},
\\[6pt]
-P & \text{in } \mathbb{R}^{n}\setminus\overline{\Omega}, & \text{if } \Omega \text{ is unbounded}.
\end{array}
\right.
\end{equation}
\end{proposition}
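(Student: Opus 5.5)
The key observation is that, for fixed $x\in\mathbb{R}^n\setminus\partial\Omega$, the sum over $\gamma$ in \eqref{Qlsdfpgjsdoigj} applied to $\dot{P}=\{(\partial^\gamma P)|_{\partial\Omega}\}_{|\gamma|\leq m-1}$ is a Taylor expansion. Since $P\in\mathcal{P}_{m-1}$, Taylor's formula with base point $y$ is exact:
\begin{equation*}
\sum_{|\gamma|\leq m-1}\frac{(x-y)^\gamma}{\gamma!}(\partial^\gamma P)(y)=P(x)\quad\text{for all }x,y\in\mathbb{R}^n.
\end{equation*}
Hence, for each $x\in\mathbb{R}^n\setminus\partial\Omega$,
\begin{equation*}
(\dot{\mathcal{D}}_{\Delta^m}\dot{P})(x)=P(x)\cdot\frac{1}{\omega_{n-1}}\int_{\partial\Omega}\frac{\nu(y)\cdot(y-x)}{|x-y|^n}\,d\sigma(y)=P(x)\,(\mathcal{D}_\Delta 1)(x).
\end{equation*}
This step is legitimate because $\partial\Omega$ is compact, so each $(\partial^\gamma P)|_{\partial\Omega}$ lies in $L^1_{\rm comp}(\partial\Omega,\sigma)$, and $x\notin\partial\Omega$ makes the kernel bounded on $\partial\Omega$. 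The problem is thereby reduced to computing the classical harmonic double layer applied to the constant function $1$.

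For that computation I will invoke the De Giorgi--Federer divergence theorem for Ahlfors regular domains, which is valid since such sets have locally finite perimeter and $\sigma$-a.e. defined normal $\nu$. Fix $x\notin\partial\Omega$ and work with the vector field $F(y):=\frac{y-x}{|y-x|^n}$, which is smooth and divergence-free on $\mathbb{R}^n\setminus\{x\}$.

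When $\Omega$ is bounded and $x\in\Omega$, I excise a small ball $B(x,\varepsilon)\subset\Omega$ and apply the divergence theorem on $\Omega\setminus\overline{B(x,\varepsilon)}$. The flux of $F$ through $\partial\Omega$ then equals its flux outward through the sphere, namely $\omega_{n-1}$, so $\mathcal{D}_\Delta 1(x)=1$. When $\Omega$ is bounded and $x\notin\overline\Omega$, $F$ is smooth on a neighborhood of $\overline\Omega$, so the flux is $0$.

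When $\Omega$ is unbounded, its complement $\mathbb{R}^n\setminus\overline{\Omega}$ is a bounded open set with the same boundary and inner normal $-\nu$. The Ahlfors regularity condition \eqref{aakudgha.skdn.Cluj.Napoca} is symmetric in $\Omega$ and its complement, so the measure-theoretic normals match up to sign. Applying the bounded case to the complement therefore gives $\mathcal{D}_\Delta 1=0$ in $\Omega$ and $-1$ in $\mathbb{R}^n\setminus\overline\Omega$.

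The one point needing care is justifying the divergence theorem on the truncated or complementary set in the rough setting. I will cite the De Giorgi--Federer version from \cite{GHA.I}, which applies to sets of locally finite perimeter with compact boundary. For the complementary domain I need the reduced boundary of $\mathbb{R}^n\setminus\overline\Omega$ to agree with that of $\Omega$, up to $\mathcal{H}^{n-1}$-null sets, with opposite normal. This follows because $\partial\Omega$ has $\mathcal{L}^n$-measure zero (being Ahlfors regular), so $\Omega^c$ and $\mathbb{R}^n\setminus\overline\Omega$ differ by a null set. Multiplying the resulting values of $\mathcal{D}_\Delta 1$ by $P(x)$ yields the four cases of \eqref{Afhgfdsffgh}.
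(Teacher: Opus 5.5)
Your proposal is correct and follows the paper's proof. The exact Taylor expansion of $P$ at base point $y$ reduces $\dot{\mathcal{D}}_{\Delta^m}\dot{P}$ to $P(x)\,(\mathcal{D}_\Delta 1)(x)$, and Gauss' solid angle theorem then finishes the argument. The paper cites that theorem outright, whereas you sketch its proof via the De Giorgi--Federer divergence theorem: ball excision for bounded $\Omega$, and passage to the bounded complement $\mathbb{R}^n\setminus\overline{\Omega}$ with normal $-\nu$ in the unbounded case. That sketch is sound.
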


\begin{proof} 
Fix $P\in\mathcal{P}_{m-1}$ and $x\in\mathbb{R}^{n}\setminus\partial\Omega$. Then \eqref{Qlsdfpgjsdoigj} and Taylor's formula give
\begin{align*}
(\dot{\mathcal{D}}_{\Delta^{m}}\dot{P})(x)=\frac{1}{\omega_{n-1}}\int_{\partial\Omega}
\frac{\nu(y)\cdot (y-x)}{|x-y|^{n}}\Bigg(\sum_{|\gamma|\leq m-1}\frac{(x-y)^{\gamma}}{\gamma!}
(\partial^\gamma P)(y)\Bigg)d\sigma(y)
=P(x)(\mathcal{D}_{\Delta}1)(x).
\end{align*}
Now \eqref{Afhgfdsffgh} follows from this and Gauss' solid angle theorem \cite[Theorema Quartum, p.\,9]{Gauss}, asserting that 
$\mathcal{D}_{\Delta}1=1$ in $\Omega$ and $\mathcal{D}_{\Delta}1=0$ in $\mathbb{R}^{n}\setminus\overline{\Omega}$ if $\Omega$ is bounded, 
while $\mathcal{D}_{\Delta}1=0$ in $\Omega$ and $\mathcal{D}_{\Delta}1=-1$ in $\mathbb{R}^{n}\setminus\overline{\Omega}$ if $\Omega$ is 
unbounded (cf., e.g., \cite[Proposition~1.3.6, (1.3.46), pp.\,33--34]{GHA.IV}, presently used with $L:=\Delta$). 
\end{proof}

\begin{theorem}\label{Sfnsmdnf} 
Fix $n,m\in\mathbb{N}$, with $n\geq 2$. Let $\Omega\subseteq\mathbb{R}^{n}$ be an Ahlfors regular domain. 
Denote by $\nu$ its geometric measure-theoretic outward unit normal, and set $\sigma:=\mathcal{H}^{n-1}\lfloor{\partial\Omega}$. 

Then for each Whitney array $\dot{f}=\{f_{\gamma}\}_{|\gamma|\leq m-1}\in{\rm WA}_{m-1}[L_{\rm comp}^{1}(\partial\Omega,\sigma)]$, 
each multi-index $\lambda\in\mathbb{N}_{0}^{n}$ with $|\lambda|=m-1$, and each point $x\in\mathbb{R}^{n}\setminus\partial\Omega$ one has
\begin{align}\label{Eqwnwiefn}
\partial^{\lambda}(\dot{\mathcal{D}}_{\Delta^{m}}\dot{f})(x)&=\frac{1}{\omega_{n-1}}\sum_{\mu\leq\lambda}\sum_{|\gamma|=|\mu|}
\frac{m}{|\mu|+1}\binom{\lambda}{\mu}\int_{\partial\Omega} \nu(y)\cdot (y-x) \times
\nonumber\\
&\hspace{3cm}
\times\partial_{x}^{\mu}\bigg[\frac{1}{|x-y|^{n}}\bigg]\frac{(x-y)^{\gamma}}{\gamma!} f_{\gamma+\lambda-\mu}(y)\,d\sigma(y).
\end{align}
\end{theorem}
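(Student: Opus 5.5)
The plan is to prove \eqref{Eqwnwiefn} by direct differentiation under the integral sign, followed by integration by parts on $\partial\Omega$ driven by the compatibility conditions \eqref{CC}. Since $x\notin\partial\Omega$ and each $f_\gamma\in L^1_{\rm comp}(\partial\Omega,\sigma)$, the kernel in \eqref{Qlsdfpgjsdoigj} is smooth in $x$ and bounded, together with all its $x$-derivatives, uniformly for $y\in{\rm supp}\,f_\gamma$. Hence $\partial^\lambda$ may be taken inside the integral.

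First, to see where the top-order components come from, I will organize the integrand as
\[
\frac{\nu(y)\cdot(y-x)}{|x-y|^n}\,F(x,y),\qquad F(x,y):=\sum_{|\gamma|\leq m-1}\frac{(x-y)^\gamma}{\gamma!}f_\gamma(y).
\]
Here $F$ is the formal Taylor expansion of the Whitney array. The algebraic point is that, by \eqref{CC}, the tangential derivative $\partial_{\tau_{jk}}$ in $y$ of $F(x,\cdot)$ telescopes. All lower-order contributions cancel, and only terms of the form $\frac{(x-y)^\gamma}{\gamma!}(\nu_j f_{\gamma+e_k}-\nu_k f_{\gamma+e_j})$ with $|\gamma|=m-2$ survive, together with the tangential derivative of the top block, which is treated only in the weak sense.

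Second, I will use the classical identity for the $x$-derivatives of the harmonic double layer kernel. Writing $\partial_{x_j}$ of $\nu(y)\cdot(y-x)|x-y|^{-n}$ as a sum over $k$ of $\partial_{\tau_{jk}}$, acting in $y$, applied to $\partial_k$ of the fundamental solution, transfers a derivative from the kernel onto $F$. This transfer goes through the integration-by-parts formula for $\partial_{\tau_{jk}}$, which is valid against $L^1_{1,{\rm loc}}$ functions with compact support. To apply it with $\psi$ smooth, I will insert a cutoff near ${\rm supp}\,f_\gamma$ that is harmless because $x$ stays away from $\partial\Omega$.

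Third, I will run an induction on $|\lambda|$, passing from $\lambda$ to $\lambda+e_j$, with the case $\lambda=0$ being \eqref{Qlsdfpgjsdoigj} itself. In the inductive step, $\partial_{x_j}$ falls either on $\partial^\mu_x[|x-y|^{-n}]$, on $(x-y)^\gamma/\gamma!$, lowering $\gamma$ by $e_j$, or on $\nu(y)\cdot(y-x)$, producing $-\nu_j$. The $-\nu_j$ terms are rewritten by the tangential integration by parts above and recombined with the $(x-y)^{\gamma-e_j}$ terms through \eqref{CC}, so that the index shifts to $\gamma+\lambda+e_j-\mu$. Euler's homogeneity relation for $\partial^\mu[|z|^{-n}]$ (degree $-n-|\mu|$) and Pascal's rule $\binom{\lambda}{\mu}+\binom{\lambda}{\mu-e_j}=\binom{\lambda+e_j}{\mu}$ should then reproduce the coefficients $\frac{m}{|\mu|+1}\binom{\lambda+e_j}{\mu}$.

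The main obstacle is the combinatorial bookkeeping in this last step, in particular producing the weight $\frac{m}{|\mu|+1}$, which is not visible from naive Leibniz expansion. To pin it down I would first test the formula on $\dot f=\dot P$ with $P\in\mathcal P_{m-1}$, using Proposition~\ref{MAahsifuhfiouh} locally (the formula is local in $x$, so compactness of the boundary can be arranged by the cutoff) together with the pointwise identity from its proof. This consistency check fixes the constants, and I would then verify that the general inductive recombination matches them.
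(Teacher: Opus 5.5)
Your induction on $|\lambda|$ at fixed $m$ has no valid inductive hypothesis. Read with $|\lambda|<m-1$, identity \eqref{Eqwnwiefn} is false. For $\lambda=0$ it would say $\dot{\mathcal{D}}_{\Delta^m}\dot f=m\,\mathcal{D}_\Delta f_{(0,\dots,0)}$. This already fails for $\dot f=\dot P$ with $P\equiv 1$ on a ball: there Proposition~\ref{MAahsifuhfiouh} gives $1$, not $m$, in $\Omega$ once $m\geq 2$.

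Yet your inductive step differentiates an expression of the shape \eqref{Eqwnwiefn} and is meant to ``reproduce the coefficients $\frac{m}{|\mu|+1}\binom{\lambda+e_j}{\mu}$''. Your base case \eqref{Qlsdfpgjsdoigj} has a different shape, since it involves all components and not only those of order $|\lambda|$. To run your scheme you would need an explicit intermediate formula for $0<|\lambda|<m-1$, involving all components of order $\geq|\lambda|$ with undetermined weights, and you never give one.

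The proposed way of getting the coefficients also fails as stated. Pascal's rule cannot merge the $\mu$ and $\mu-e_j$ contributions, because their weights depend on $|\mu|$. The factor $m$ does not come from the homogeneity of $\partial^\mu[|z|^{-n}]$. Testing on $\dot P$ only recovers the normalization $\int_{S^{n-1}}k_{\lambda\beta}\,d\mathcal{H}^{n-1}=\delta_{\lambda\beta}$, which is exactly how the paper uses it for \eqref{eq:ugt-igFRD}; it does not determine the individual weights. A consistency check is not a derivation.

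The missing idea is to induct on $m$ instead, keeping $|\lambda|=m-1$ throughout. Set $\vec k=\nabla E_\Delta$ and $Q(x,y):=\sum_{|\gamma|=m}\frac{(x-y)^\gamma}{\gamma!}f_\gamma(y)$. The paper proves, for $\dot f\in{\rm WA}_{m}[L^1_{\rm comp}(\partial\Omega,\sigma)]$,
\begin{align*}
\partial_{x_\ell}(\dot{\mathcal{D}}_{\Delta^{m+1}}\dot f)(x)
&=(\dot{\mathcal{D}}_{\Delta^{m}}\dot f^{(e_\ell)})(x)-\partial_{x_\ell}\Big[\int_{\partial\Omega}\nu(y)\cdot\vec k(x-y)\,Q(x,y)\,d\sigma(y)\Big]
\\
&\qquad+\frac{m}{\omega_{n-1}}\int_{\partial\Omega}\frac{\nu_\ell(y)}{|x-y|^n}\,Q(x,y)\,d\sigma(y).
\end{align*}
Here $\dot f^{(e_\ell)}:=\{f_{\gamma+e_\ell}\}_{|\gamma|\leq m-1}$ again satisfies \eqref{CC}, so the induction hypothesis applies to it.

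The crucial technical point is to split off the top block before integrating by parts. The tangential integration by parts, based on ${\rm div}\,\vec k=0$, is applied only to the part with $|\gamma|\leq m-1$, whose components lie in $L^1_{1,{\rm loc}}$. After the telescoping, all lower-order terms cancel and only $\partial_{\tau_{j\ell}}f_\gamma$ with $|\gamma|=m-1$ survive. Then \eqref{CC} turns these into top-order functions, and Euler's identity for the degree-$m$ homogeneous polynomial $z\mapsto\sum_{|\gamma|=m}\frac{z^\gamma}{\gamma!}f_\gamma(y)$ produces the factor $m$. Your ``tangential derivative of the top block, treated in the weak sense'' is exactly what must be avoided, since top-order components are merely in $L^1$.

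The weights then come from symmetrizing, $\partial^\eta=\sum_{\lambda+e_\ell=\eta}\frac{\eta_\ell}{m}\partial^\lambda\partial_{x_\ell}$, together with $\eta_\ell\binom{\eta-e_\ell}{\mu}=(\eta_\ell-\mu_\ell)\binom{\eta}{\mu}$. These convert the inductive weights into $\frac{m-|\mu|}{|\mu|+1}$. The Leibniz expansion of the top-block term contributes $1$ more, giving $\frac{m+1}{|\mu|+1}$, while the $\nu_\ell$-terms cancel.
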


\begin{proof} 
Let $E_\Delta$ denote the canonical fundamental solution for the Laplacian (cf., e.g., \cite[Theorem~7.2, pp.\,235--236]{DM}).
Consider $\vec{k}=(k_{j})_{1\leq j\leq n}\in\big[{\mathscr{C}}^{\infty}(\mathbb{R}^{n}\setminus\{0\})\big]^{n}$ given by 
\begin{equation}\label{EW34dfghh}
\vec{k}(z):=\frac{1}{\omega_{n-1}}\frac{z}{|z|^{n}}=\nabla E_\Delta(z)\,\,\,\,\text{for every $z\in\mathbb{R}^{n}\setminus\{0\}$},    
\end{equation}
so for each $\dot{f}=\{f_{\gamma}\}_{|\gamma|\leq m-1}\in{\rm WA}_{m-1}[L_{\rm comp}^{1}(\partial\Omega,\sigma)]$ and 
$x\in\mathbb{R}^{n}\setminus\partial\Omega$, we may recast \eqref{Qlsdfpgjsdoigj} as 
\begin{equation}\label{JHSakglksjdgh_1}
(\dot{\mathcal{D}}_{\Delta^m}\dot{f})(x)=-\int_{\partial\Omega} \nu(y)\cdot\vec{k}(x-y)
\Bigg(\sum_{|\gamma|\leq m-1}\frac{(x-y)^{\gamma}}{\gamma!}f_{\gamma}(y)\Bigg)d\sigma(y).
\end{equation}

The proof of \eqref{Eqwnwiefn} proceeds by induction on $m\in\mathbb{N}$. Corresponding to $m=1$,
formula \eqref{Eqwnwiefn} becomes precisely the familiar expression for the boundary-to-domain harmonic double layer potential operator 
${\mathcal{D}}_\Delta$ associated with the set $\Omega$.

Next, suppose that \eqref{Eqwnwiefn} holds for some $m\in\mathbb{N}$ with the goal of establishing \eqref{Eqwnwiefn} for $m+1$. To this end, fix 
$\dot{f}=\{f_{\gamma}\}_{|\gamma|\leq m}\in{\rm WA}_{m}[L_{\rm comp}^{1}(\partial\Omega,\sigma)]$, a multi-index 
$\eta\in\mathbb{N}_{0}^{n}$ with $|\eta|=m$, and $x\in\mathbb{R}^{n}\setminus\partial\Omega$. We need to show that 
\begin{align}\label{Induchypsdf}
\partial^{\eta}(\dot{\mathcal{D}}_{\Delta^{m+1}}\dot{f})(x)&=\frac{1}{\omega_{n-1}}
\sum_{\mu\leq\eta}\sum_{|\gamma|=|\mu|}\frac{m+1}{|\mu|+1}\binom{\eta}{\mu}\int_{\partial\Omega} \nu(y)\cdot (y-x) \times 
\nonumber\\
&\hspace{3cm}\times\partial_{x}^{\mu}\bigg[\frac{1}{|x-y|^{n}}\bigg]\frac{(x-y)^{\gamma}}{\gamma!}f_{\gamma+\eta-\mu}(y)d\sigma(y).
\end{align}
Keeping in mind that $\sum_{\ell=1}^{n}\eta_{\ell}=m$ we may write
\begin{equation}\label{Jsjkahfjkhasdg}
\partial^{\eta}(\dot{\mathcal{D}}_{\Delta^{m+1}}\dot{f})(x)
=\sum_{\lambda+e_{\ell}=\eta}\frac{\eta_{\ell}}{m}\,\partial_{x}^{\lambda}\partial_{x_{\ell}}
(\dot{\mathcal{D}}_{\Delta^{m+1}}\dot{f})(x).
\end{equation}
We evaluate the right-hand side of \eqref{Jsjkahfjkhasdg} in three steps.

\vskip 0.1in
{\bf Step one:} {\tt Treat $\partial_{x_{\ell}}(\dot{\mathcal{D}}_{\Delta^{m+1}}\dot{f})(x)$ for $\ell\in\{1,\dots,n\}$.} 
It follows from the Leibniz product rule and \eqref{JHSakglksjdgh_1} (written with $m$ replaced by $m+1$) that
\begin{align}\label{Aqassdg}
\partial_{x_{\ell}}(\dot{\mathcal{D}}_{\Delta^{m+1}}\dot{f})(x)&=-\int_{\partial\Omega} \nu(y)\cdot\partial_{x_{\ell}}\vec{k}(x-y)\Bigg(\sum_{|\gamma|\leq m}\frac{(x-y)^{\gamma}}{\gamma!} f_{\gamma}(y) \Bigg)d\sigma(y)\nonumber\\[6pt]
&\qquad-\int_{\partial\Omega} \nu(y)\cdot\vec{k}(x-y)\,\partial_{x_{\ell}}\Bigg(\sum_{|\gamma|\leq m}\frac{(x-y)^{\gamma}}{\gamma!} f_{\gamma}(y) \Bigg)d\sigma(y).
\end{align}
Regarding the last line above, for each $r\in\{0,1,\dots,m\}$ at $\sigma$-a.e. $y\in\partial\Omega$ we may write
\begin{align}\label{Mskjalkfsjdg}
\partial_{x_{\ell}}\Bigg(\sum_{|\gamma|=r}\frac{(x-y)^{\gamma}}{\gamma!} 
f_{\gamma}(y) \Bigg)&=\sum_{\substack{|\gamma|=r\\ e_{\ell}
\leq\gamma}}\frac{(x-y)^{\gamma-e_{\ell}}}{(\gamma-e_{\ell})!}f_{\gamma}(y)
=\sum_{|\gamma|=r-1}\frac{(x-y)^{\gamma}}{\gamma!}f_{\gamma+e_{\ell}}(y).
\end{align}
Make use of this (summing up over $r$) to conclude that the last line of \eqref{Aqassdg} is equal to
\begin{align}\label{Kaposkaoskf}
-\int_{\partial\Omega} \nu(y)\cdot\vec{k}(x-y)\Bigg(\sum_{|\gamma|\leq m-1}\frac{(x-y)^{\gamma}}{\gamma!} 
f_{\gamma+e_{\ell}}(y)\Bigg)d\sigma(y).
\end{align}
Set $\dot{f}^{(e_{\ell})}:=\{f_{\gamma+e_{\ell}}\}_{|\gamma|\leq m-1}$. Since the components of
$\dot{f}$ are in $L_{\rm comp}^{1}(\partial\Omega,\sigma)$, so are those of $\dot{f}^{(e_{\ell})}$. 
These memberships combined with \eqref{JHSakglksjdgh_1} imply that 
\begin{equation}\label{Ghsjagjhgfdg}
\text{the last line in \eqref{Aqassdg} is equal to $(\dot{\mathcal{D}}_{\Delta^{m}}\dot{f}^{(e_{\ell})})(x)$.}
\end{equation}

To treat the first line on the right-hand side of \eqref{Aqassdg}, start by writing it as $A+B$, where
\begin{align}\label{Kasgjhgasdf_A}
A:=&-\int_{\partial\Omega}\nu(y)\cdot\partial_{x_{\ell}}\vec{k}(x-y)
\Bigg(\sum_{|\gamma|\leq m-1}\frac{(x-y)^{\gamma}}{\gamma!}f_{\gamma}(y) \Bigg)d\sigma(y),
\\[6pt]
B:=&-\int_{\partial\Omega}\nu(y)\cdot\partial_{x_{\ell}}\vec{k}(x-y)
\Bigg(\sum_{|\gamma|=m}\frac{(x-y)^{\gamma}}{\gamma!}f_{\gamma}(y)\Bigg)d\sigma(y).
\label{Kasgjhgasdf_B}
\end{align}
Observing that $\operatorname{div}\vec{k}=0$ in $\mathbb{R}^{n}\setminus\{0\}$, we may invoke \cite[(5.1.198), p.\,564]{GHA.IV} 
to rewrite \eqref{Kasgjhgasdf_A} as 
\begin{equation}\label{Yqwdsdfsdg}
A=-\sum_{j=1}^{n}\int_{\partial\Omega} k_{j}(x-y)\,\partial_{\tau_{j\ell}(y)}
\Bigg(\sum_{|\gamma|\leq m-1}\frac{(x-y)^{\gamma}}{\gamma!} f_{\gamma}(y)\Bigg)d\sigma(y).
\end{equation}
We next claim that at $\sigma$-a.e. point $y\in\partial\Omega$ we have 
\begin{equation}\label{MNnhsabdjhfjkh}
\partial_{\tau_{j\ell}(y)}\Bigg(\sum_{|\gamma|\leq m-1}\frac{(x-y)^{\gamma}}{\gamma!}f_{\gamma}(y)\Bigg)
=\sum_{|\gamma|=m-1}\frac{(x-y)^{\gamma}}{\gamma!}\partial_{\tau_{j\ell}(y)}\big[f_{\gamma}(y)\big].
\end{equation}
To justify this claim, first use \cite[Proposition~11.1.15, p.\,700]{GHA.II} and the compatibility conditions \eqref{CC} to write, 
for each $\gamma\in\mathbb{N}_{0}^{n}$ with $|\gamma|\leq m-1$ and $\sigma$-a.e. point $y\in\partial\Omega$, 
\begin{align}\label{Pmasfnshdfn}
\partial_{\tau_{j\ell}(y)}\Bigg(\frac{(x-y)^{\gamma}}{\gamma!}f_{\gamma}(y) \Bigg)
&=-\Bigg(\nu_{j}(y)\frac{(x-y)^{\gamma-e_{\ell}}}{(\gamma-e_{\ell})!}\mathbf{1}_{\{e_{\ell}
\leq\gamma\}}-\nu_{\ell}(y)\frac{(x-y)^{\gamma-e_{j}}}{(\gamma-e_{j})!}\mathbf{1}_{\{e_{j}\leq\gamma\}}\Bigg)f_{\gamma}(y)
\nonumber\\[6pt]
&\qquad+\frac{(x-y)^{\gamma}}{\gamma!}\partial_{\tau_{j\ell}(y)}\big[f_{\gamma}(y)\big].
\end{align}
Focusing on the terms in the first line on the right-hand side of \eqref{Pmasfnshdfn}, the compatibility 
conditions satisfied by the array $\dot{f}$ (cf. \eqref{CC}) allow us to write
\begin{align}\label{Qewwydgfuysdgf}
&-\sum_{|\gamma|\leq m-1}\Bigg(\nu_{j}(y)\frac{(x-y)^{\gamma-e_{\ell}}}{(\gamma-e_{\ell})!}\mathbf{1}_{\{e_{\ell}
\leq\gamma\}}-\nu_{\ell}(y)\frac{(x-y)^{\gamma-e_{j}}}{(\gamma-e_{j})!}\mathbf{1}_{\{e_{j}\leq\gamma\}}\Bigg)f_{\gamma}(y)
\nonumber\\[4pt]
&\quad=-\sum_{|\gamma|\leq m-2}\frac{(x-y)^{\gamma}}{\gamma!}
\Big(\nu_{j}(y)f_{\gamma+e_{\ell}}(y)-\nu_{\ell}(y)f_{\gamma+e_{j}}(y)\Big)
\nonumber\\[4pt]
&\quad=-\sum_{|\gamma|\leq m-2}\frac{(x-y)^{\gamma}}{\gamma!}\partial_{\tau_{j\ell}(y)}\big[f_{\gamma}(y)\big].
\end{align}
Summing up the identity in \eqref{Pmasfnshdfn} over all $\gamma\in\mathbb{N}_{0}^{n}$ with $|\gamma|\leq m-1$
and making use of \eqref{Qewwydgfuysdgf} yields \eqref{MNnhsabdjhfjkh}.
In turn, \eqref{MNnhsabdjhfjkh} may be used in \eqref{Yqwdsdfsdg} to obtain
\begin{equation}\label{SUIhaiushdgsh}
A=-\sum_{j=1}^{n}\int_{\partial\Omega} k_{j}(x-y)
\Bigg(\sum_{|\gamma|=m-1}\frac{(x-y)^{\gamma}}{\gamma!}\partial_{\tau_{j\ell}(y)}\big[f_{\gamma}(y)\big]\Bigg)d\sigma(y).
\end{equation}
Invoking once again the compatibility conditions \eqref{CC}, we further compute 
\begin{align}\label{NJKShgsiudhgisuhdghjjj}
A &=-\sum_{j=1}^{n}\int_{\partial\Omega} k_{j}(x-y)
\Bigg(\sum_{|\gamma|=m-1}\frac{(x-y)^{\gamma}}{\gamma!}
\big[\nu_{j}(y)f_{\gamma+e_{\ell}}(y)-\nu_{\ell}(y)f_{\gamma+e_{j}}(y)\big]\Bigg)d\sigma(y)
\nonumber\\[4pt]
&=-\int_{\partial\Omega}\nu(y)\cdot\vec{k}(x-y)
\Bigg(\sum_{|\gamma|=m-1}\frac{(x-y)^{\gamma}}{\gamma!}f_{\gamma+e_{\ell}}(y)\Bigg)d\sigma(y)\nonumber\\[4pt]
&\quad+\sum_{j=1}^{n}\int_{\partial\Omega}\nu_{\ell}(y)k_{j}(x-y)
\Bigg(\sum_{|\gamma|=m-1}\frac{(x-y)^{\gamma}}{\gamma!}f_{\gamma+e_{j}}(y)\Bigg)d\sigma(y).
\end{align}
For the first term on the right-most side of \eqref{NJKShgsiudhgisuhdghjjj}, employ \eqref{Mskjalkfsjdg} (with $r=m$) in reverse to conclude that 
\begin{align}\label{Tyuwegfjhgwe}
&-\int_{\partial\Omega}\nu(y)\cdot\vec{k}(x-y)
\Bigg(\sum_{|\gamma|=m-1}\frac{(x-y)^{\gamma}}{\gamma!}f_{\gamma+e_{\ell}}(y)\Bigg)d\sigma(y)
\nonumber\\[4pt]
&\quad=-\int_{\partial\Omega}\nu(y)\cdot\vec{k}(x-y)\,\partial_{x_{\ell}}
\Bigg(\sum_{|\gamma|=m}\frac{(x-y)^{\gamma}}{\gamma!}f_{\gamma}(y)\Bigg)d\sigma(y).
\end{align}
For the last line in \eqref{NJKShgsiudhgisuhdghjjj}, use that 
$k_{j}(z)=\frac{1}{\omega_{n-1}}\frac{z_{j}}{|z|^{n}}$ and \cite[Lemma~2.22, p.\,1270]{HLMMM} to write 
\begin{align}\label{swopeopjweg}
&\sum_{j=1}^{n}\int_{\partial\Omega}\nu_{\ell}(y)k_{j}(x-y)
\Bigg(\sum_{|\gamma|=m-1}\frac{(x-y)^{\gamma}}{\gamma!}f_{\gamma+e_{j}}(y)\Bigg)d\sigma(y)
\nonumber\\[4pt]
&\quad=\frac{1}{\omega_{n-1}}\int_{\partial\Omega}\frac{\nu_{\ell}(y)}{|x-y|^{n}}
\Bigg(\sum_{j=1}^{n}\sum_{|\gamma|=m-1}\frac{(x-y)^{\gamma+e_{j}}}{\gamma!}f_{\gamma+e_{j}}(y)\Bigg)d\sigma(y)
\nonumber\\[4pt]
&\quad=\frac{m}{\omega_{n-1}}\int_{\partial\Omega}\frac{\nu_{\ell}(y)}{|x-y|^{n}}
\Bigg(\sum_{|\gamma|=m}\frac{(x-y)^{\gamma}}{\gamma!}f_{\gamma}(y)\Bigg)d\sigma(y).
\end{align}
Altogether, \eqref{NJKShgsiudhgisuhdghjjj}, \eqref{Tyuwegfjhgwe}, and \eqref{swopeopjweg} imply
\begin{align}\label{Mewqrwefg}
A=&-\int_{\partial\Omega}\nu(y)\cdot\vec{k}(x-y)\,\partial_{x_{\ell}}
\Bigg(\sum_{|\gamma|=m}\frac{(x-y)^{\gamma}}{\gamma!}f_{\gamma}(y)\Bigg)d\sigma(y)
\nonumber\\[4pt]
&\qquad+\frac{m}{\omega_{n-1}}\int_{\partial\Omega}\frac{\nu_{\ell}(y)}{|x-y|^{n}}
\Bigg(\sum_{|\gamma|=m}\frac{(x-y)^{\gamma}}{\gamma!}f_{\gamma}(y)\Bigg)d\sigma(y).
\end{align}
Recalling \eqref{Kasgjhgasdf_B}, from \eqref{Mewqrwefg} and Leibniz's product rule we obtain that $A+B$,
which is equal to the first line on the right-hand side of \eqref{Aqassdg}, is in fact equal to 
\begin{align}\label{Pasdnjksdfhbj}
A+B=&-\partial_{x_{\ell}}\Bigg[\int_{\partial\Omega}\nu(y)\cdot\vec{k}(x-y)
\Bigg(\sum_{|\gamma|=m}\frac{(x-y)^{\gamma}}{\gamma!}f_{\gamma}(y)\Bigg)d\sigma(y)\Bigg]
\nonumber\\[4pt]
&\qquad+\frac{m}{\omega_{n-1}}\int_{\partial\Omega}\frac{\nu_{\ell}(y)}{|x-y|^{n}}
\Bigg(\sum_{|\gamma|=m}\frac{(x-y)^{\gamma}}{\gamma!}f_{\gamma}(y)\Bigg)d\sigma(y).
\end{align}
In concert, \eqref{Aqassdg}, \eqref{Ghsjagjhgfdg}, and \eqref{Pasdnjksdfhbj} imply that for each $\ell\in\{1,\dots,n\}$ we have
\begin{align}\label{JKSnakjfhkjhdg}
\partial_{x_{\ell}}(\dot{\mathcal{D}}_{\Delta^{m+1}}\dot{f})(x)
&=(\dot{\mathcal{D}}_{\Delta^{m}}\dot{f}^{(e_{\ell})})(x)-\partial_{x_{\ell}}\Bigg[\int_{\partial\Omega}\nu(y)\cdot\vec{k}(x-y)
\Bigg(\sum_{|\gamma|=m}\frac{(x-y)^{\gamma}}{\gamma!}f_{\gamma}(y)\Bigg)\,d\sigma(y)\Bigg]\nonumber\\[6pt]
&\qquad+\frac{m}{\omega_{n-1}}\int_{\partial\Omega}\frac{\nu_{\ell}(y)}{|x-y|^{n}}
\Bigg(\sum_{|\gamma|=m}\frac{(x-y)^{\gamma}}{\gamma!}f_{\gamma}(y)\Bigg)d\sigma(y).
\end{align}

\vskip 0.1in
{\bf Step two:} {\tt Computing $\partial_{x}^{\lambda}\partial_{x_{\ell}}(\dot{\mathcal{D}}_{\Delta^{m+1}}\dot{f})(x)$ for 
$\lambda\in\mathbb{N}_{0}^{n}$ and $\ell\in\operatorname{supp}\eta$ with $\lambda+e_{\ell}=\eta$.} Note that these conditions 
imply $|\lambda|=m-1$. Applying $\partial_{x}^{\lambda}$ to both sides of \eqref{JKSnakjfhkjhdg} gives 
\begin{align}\label{NJKAshakjfhkshdg}
&\partial_{x}^{\lambda}\partial_{x_{\ell}}(\dot{\mathcal{D}}_{\Delta^{m+1}}\dot{f})(x)
\nonumber\\
&\quad=\partial^{\lambda}(\dot{\mathcal{D}}_{\Delta^{m}}\dot{f}^{(e_{\ell})})(x)-\partial_{x}^{\lambda}\partial_{x_{\ell}}
\Bigg[\int_{\partial\Omega}\nu(y)\cdot\vec{k}(x-y)
\Bigg(\sum_{|\gamma|=m}\frac{(x-y)^{\gamma}}{\gamma!}f_{\gamma}(y)\Bigg)d\sigma(y)\Bigg]
\nonumber\\[6pt]
&\qquad+\frac{m}{\omega_{n-1}}\partial_{x}^{\lambda}\Bigg[\int_{\partial\Omega}\frac{\nu_{\ell}(y)}{|x-y|^{n}}
\Bigg(\sum_{|\gamma|=m}\frac{(x-y)^{\gamma}}{\gamma!}f_{\gamma}(y)\Bigg)d\sigma(y)\Bigg].
\end{align}
The idea now is to invoke the induction hypothesis for $\partial^{\lambda}(\dot{\mathcal{D}}_{\Delta^{m}}\dot{f}^{(e_{\ell})})$.
As a prerequisite, we need to check that $\dot{f}^{(e_{\ell})}\in{\rm WA}_{m-1}[L_{\rm comp}^{1}(\partial\Omega,\sigma)]$, which presently 
comes down to verifying that $\dot{f}^{(e_{\ell})}\in({\rm CC})_{m-1}$. The latter membership follows by 
observing that under the current assumptions for each $\gamma\in\mathbb{N}_{0}^{n}$ with $|\gamma|\leq m-2$, 
and each $j,k\in\{1,\dots,n\}$, we have
\begin{align}\label{Qdmdskvmdsfg}
\partial_{\tau_{jk}}f_{\gamma}^{(e_{\ell})}=\partial_{\tau_{jk}}f_{\gamma+e_\ell}
=\nu_{j}f_{\gamma+e_{\ell}+e_{k}}-\nu_{k}f_{\gamma+e_{\ell}+e_{j}}
=\nu_{j}f_{\gamma+e_{k}}^{(e_{\ell})}-\nu_{k}f_{\gamma+e_{j}}^{(e_{\ell})}.
\end{align}
Granted this, the induction hypothesis for the current multi-index $\lambda$ allows us to write
\begin{align}\label{Wqposdkfposdkf-A}
\partial^{\lambda}\big(\dot{\mathcal{D}}_{\Delta^{m}}\dot{f}^{(e_{\ell})}\big)(x)
&=\frac{1}{\omega_{n-1}}\sum_{\mu\leq\lambda}\sum_{|\gamma|=|\mu|}
\frac{m}{|\mu|+1}\binom{\lambda}{\mu}\int_{\partial\Omega}\nu(y)\cdot (y-x)\times 
\nonumber\\[4pt]
&\hspace{3cm}\times\partial_{x}^{\mu}\bigg[\frac{1}{|x-y|^{n}}\bigg]\frac{(x-y)^{\gamma}}{\gamma!}\, 
f^{(e_{\ell})}_{\gamma+\lambda-\mu}(y)\,d\sigma(y).
\end{align}
Recalling that $\eta=\lambda+e_{\ell}$, hence $f^{(e_{\ell})}_{\gamma+\lambda-\mu}=f_{\gamma+\lambda-\mu+e_{\ell}}=f_{\gamma+\eta-\mu}$, we therefore obtain
\begin{align}\label{Wqposdkfposdkf-B}
\partial^{\lambda}\big(\dot{\mathcal{D}}_{\Delta^{m}}\dot{f}^{(e_{\ell})}\big)(x)
&=\frac{1}{\omega_{n-1}}\sum_{\mu\leq\eta-e_{\ell}}\sum_{|\gamma|=|\mu|}\frac{m}{|\mu|+1}\binom{\eta-e_{\ell}}{\mu}
\int_{\partial\Omega}\nu(y)\cdot (y-x)\times 
\nonumber\\[4pt]
&\hspace{3cm}\times\partial_{x}^{\mu}\bigg[\frac{1}{|x-y|^{n}}\bigg]\frac{(x-y)^{\gamma}}{\gamma!}\, 
f_{\gamma+\eta-\mu}(y)\,d\sigma(y).
\end{align}
Returning with \eqref{Wqposdkfposdkf-B} to \eqref{NJKAshakjfhkshdg} and also bearing in mind that 
$\partial_{x}^{\lambda}\partial_{x_{\ell}}=\partial^{\eta}_x$, we conclude that 
\begin{align}\label{Ttyudfghvbnm}
\partial_{x}^{\lambda}\partial_{x_{\ell}}(\dot{\mathcal{D}}_{\Delta^{m+1}}\dot{f})(x)
&=\frac{1}{\omega_{n-1}}\sum_{\mu\leq\eta-e_{\ell}}\sum_{|\gamma|=|\mu|}\frac{m}{|\mu|+1}\binom{\eta-e_{\ell}}{\mu}
\int_{\partial\Omega} \nu(y)\cdot(y-x)\times
\nonumber\\[2pt]
&\hspace{3cm}\times\partial_{x}^{\mu}\bigg[\frac{1}{|x-y|^{n}}\bigg]\frac{(x-y)^{\gamma}}{\gamma!}\, 
f_{\gamma+\eta-\mu}(y)\,d\sigma(y)
\nonumber\\[6pt]
&\qquad-\partial_{x}^{\eta}\Bigg[\int_{\partial\Omega}\nu(y)\cdot\vec{k}(x-y)\sum_{|\gamma|=m}\frac{(x-y)^{\gamma}}{\gamma!}
f_{\gamma}(y)d\sigma(y)\Bigg]
\nonumber\\[6pt]
&\qquad+\frac{m}{\omega_{n-1}}\,\partial_{x}^{\lambda}\Bigg[\int_{\partial\Omega}\frac{\nu_{\ell}(y)}{|x-y|^{n}}
\Bigg(\sum_{|\gamma|=m}\frac{(x-y)^{\gamma}}{\gamma!}f_{\gamma}(y)\Bigg)\,d\sigma(y)\Bigg].
\end{align}

\vskip 0.1in
{\bf Step three:} {\tt Computing $\partial^{\eta}(\dot{\mathcal{D}}_{\Delta^{m+1}}\dot{f})(x)$.} The idea is to combine
\eqref{Jsjkahfjkhasdg} and \eqref{Ttyudfghvbnm}: multiplying both sides of \eqref{Ttyudfghvbnm} by 
$\displaystyle\frac{\eta_{\ell}}{m}$ and summing over all decompositions $\lambda+e_{\ell}=\eta$ yields
\begin{equation}\label{JJJsknsdlhsd}
\partial^{\eta}(\dot{\mathcal{D}}_{\Delta^{m+1}}\dot{f})(x)=I_1+I_2+I_3
\end{equation}
where
\begin{align}\label{Afdsdgdfhdh}
I_1:=&\frac{1}{\omega_{n-1}}\sum_{\lambda+e_{\ell}=\eta}\frac{\eta_{\ell}}{m}\sum_{\mu\leq\eta-e_{\ell}}
\sum_{|\gamma|=|\mu|}\frac{m}{|\mu|+1}\binom{\eta-e_{\ell}}{\mu}\int_{\partial\Omega} \nu(y)\cdot (y-x)\times 
\nonumber\\[4pt]
&\hspace{4cm}\times\partial_{x}^{\mu}\bigg[\frac{1}{|x-y|^{n}}\bigg]\frac{(x-y)^{\gamma}}{\gamma!}\, 
f_{\gamma+\eta-\mu}(y)\,d\sigma(y),
\end{align}
\begin{align}\label{Afdsdgdfhdh_2}
I_2:=-\sum_{\lambda+e_{\ell}=\eta}\frac{\eta_{\ell}}{m}\,
\partial_{x}^{\eta}\Bigg[\int_{\partial\Omega}\nu(y)\cdot\vec{k}(x-y)\sum_{|\gamma|=m}\frac{(x-y)^{\gamma}}{\gamma!}
f_{\gamma}(y)\,d\sigma(y)\Bigg],
\end{align}
and 
\begin{align}\label{Afdsdgdfhdh_3}
I_3:=\frac{m}{\omega_{n-1}}\sum_{\lambda+e_{\ell}=\eta}\frac{\eta_{\ell}}{m}\,
\partial_{x}^{\lambda}\Bigg[\int_{\partial\Omega}\frac{\nu_{\ell}(y)}{|x-y|^{n}}
\Bigg(\sum_{|\gamma|=m}\frac{(x-y)^{\gamma}}{\gamma!}f_{\gamma}(y)\Bigg)\,d\sigma(y)\Bigg].
\end{align}

First focus on $I_1$. The summand in this term is independent of $\lambda\in\mathbb{N}_{0}^{n}$. Moreover, note that 
$\ell\in\operatorname{supp}\eta$ and for each $\mu\leq\eta-e_{\ell}$ we have 
$\eta_{\ell}\binom{\eta-e_{\ell}}{\mu}=(\eta_{\ell}-\mu_{\ell})\binom{\eta}{\mu}$. In light of this identity
and the fact that $\eta_{\ell}-\mu_{\ell}=0$ if $\mu=\eta$, we may extend the range of $\mu$ in the second sum in 
\eqref{Afdsdgdfhdh} to $\mu\leq\eta$. As a consequence, the expression in \eqref{Afdsdgdfhdh} becomes
\begin{align}\label{LPskashfskjdhglksjg}
I_1=&\frac{1}{\omega_{n-1}}\sum_{\ell\in\operatorname{supp}\eta}\sum_{\mu\leq\eta}
\sum_{|\gamma|=|\mu|}\frac{(\eta_{\ell}-\mu_{\ell})}{|\mu|+1}\binom{\eta}{\mu}
\int_{\partial\Omega}\nu(y)\cdot (y-x)\times 
\nonumber\\[4pt]
&\hspace{3cm}\times\partial_{x}^{\mu}\bigg[\frac{1}{|x-y|^{n}}\bigg]\frac{(x-y)^{\gamma}}{\gamma!}\, 
f_{\gamma+\eta-\mu}(y)\,d\sigma(y).
\end{align}
Upon noting that $\sum_{\ell\in\operatorname{supp}\eta}(\eta_{\ell}-\mu_{\ell})=m-|\mu|$ for $\mu\leq\eta$ further leads to  
\begin{align}\label{AMmsadklajsklfj}
I_1=&\frac{1}{\omega_{n-1}}\sum_{\mu\leq\eta}\sum_{|\gamma|=|\mu|}\frac{m-|\mu|}{|\mu|+1}\binom{\eta}{\mu}
\int_{\partial\Omega} \nu(y)\cdot (y-x) \times 
\nonumber\\[4pt]
&\hspace{3cm}\times\partial_{x}^{\mu}\bigg[\frac{1}{|x-y|^{n}}\bigg]\frac{(x-y)^{\gamma}}{\gamma!}\, 
f_{\gamma+\eta-\mu}(y)\,d\sigma(y).
\end{align}

In relation to $I_2$ from \eqref{Afdsdgdfhdh_2}, first note that $\sum_{\lambda+e_{\ell}=\eta}\eta_{\ell}=m$. Also recalling 
\eqref{EW34dfghh}, differentiating under the integral sign, and applying the Leibniz product rule produces 
\begin{align}\label{SKLjkljgj}
I_2 &=\frac{1}{\omega_{n-1}}\sum_{j=1}^{n}\sum_{\lambda\leq\eta}\binom{\eta}{\lambda}\int_{\partial\Omega}\nu_{j}(y)
\partial_{x}^{\eta-\lambda}\big[(y_{j}-x_{j})\big]\times
\nonumber\\[4pt]
&\hspace{3.8cm}
\times\partial_{x}^{\lambda}
\Bigg(\frac{1}{|x-y|^{n}}\sum_{|\gamma|=m}\frac{(x-y)^{\gamma}}{\gamma!}f_{\gamma}(y)\Bigg)\,d\sigma(y).
\end{align}
The presence of $\partial_{x}^{\eta-\lambda}\big[(y_{j}-x_{j})\big]$ forces $\eta-\lambda$ to be either $0$ or $e_j$, so 
\eqref{SKLjkljgj} further simplifies as
\begin{align}\label{Mjsaajkshfjhg}
I_2=&\frac{1}{\omega_{n-1}}\int_{\partial\Omega}\nu(y)\cdot (y-x)\, \partial_{x}^{\eta}
\Bigg(\frac{1}{|x-y|^{n}}\sum_{|\gamma|=m}\frac{(x-y)^{\gamma}}{\gamma!}f_{\gamma}(y)\Bigg)\,d\sigma(y)
\nonumber\\[4pt]
&\quad-\frac{1}{\omega_{n-1}}\sum_{\lambda+e_{j}=\eta}\eta_{j}\int_{\partial\Omega}\nu_{j}(y)\,\partial_{x}^{\lambda}
\Bigg(\frac{1}{|x-y|^{n}}\sum_{|\gamma|=m}\frac{(x-y)^{\gamma}}{\gamma!}f_{\gamma}(y)\Bigg)\,d\sigma(y).
\end{align}

Finally, upon differentiating under the integral sign in \eqref{Afdsdgdfhdh_3} we obtain 
\begin{align}\label{Frdghsfdg}
I_3=\frac{1}{\omega_{n-1}}\sum_{\lambda+e_{\ell}=\eta}\eta_{\ell}\int_{\partial\Omega}\nu_{\ell}(y)\,\partial_{x}^{\lambda}
\Bigg(\frac{1}{|x-y|^{n}}\sum_{|\gamma|=m}\frac{(x-y)^{\gamma}}{\gamma!}f_{\gamma}(y)\Bigg)\,d\sigma(y),
\end{align}
which is the opposite of the second line in \eqref{Mjsaajkshfjhg}. Bearing this in mind, from 
\eqref{Afdsdgdfhdh}, \eqref{AMmsadklajsklfj}, \eqref{Mjsaajkshfjhg}, and \eqref{Frdghsfdg} we conclude that 
\begin{align}\label{Yygfgfgbf}
\partial^{\eta}(\dot{\mathcal{D}}_{\Delta^{m+1}}\dot{f})(x)
&=\frac{1}{\omega_{n-1}}\sum_{\mu\leq\eta}\sum_{|\gamma|=|\mu|}\frac{m-|\mu|}{|\mu|+1}\binom{\eta}{\mu}
\int_{\partial\Omega} \nu(y)\cdot (y-x)\times 
\nonumber\\[4pt]
&\hspace{3cm}\times\partial_{x}^{\mu}\bigg[\frac{1}{|x-y|^{n}}\bigg]\frac{(x-y)^{\gamma}}{\gamma!}\,f_{\gamma+\eta-\mu}(y)\,d\sigma(y)
\nonumber\\[6pt]
&+\frac{1}{\omega_{n-1}}\int_{\partial\Omega}\nu(y)\cdot (y-x)\,\partial_{x}^{\eta}
\Bigg(\frac{1}{|x-y|^{n}}\sum_{|\gamma|=m}\frac{(x-y)^{\gamma}}{\gamma!}f_{\gamma}(y)\Bigg)\,d\sigma(y).
\end{align}
By the Leibniz product rule, the last line on the right-hand side of \eqref{Yygfgfgbf} may be expressed as 
\begin{align}\label{Qhdjkslasgh}
&\frac{1}{\omega_{n-1}}\int_{\partial\Omega}\nu(y)\cdot (y-x)\,\partial_{x}^{\eta}\Bigg(\frac{1}{|x-y|^{n}}
\sum_{|\gamma|=m}\frac{(x-y)^{\gamma}}{\gamma!}f_{\gamma}(y)\Bigg)d\sigma(y)\nonumber\\[4pt]
&\qquad\qquad
=\frac{1}{\omega_{n-1}}\sum_{\mu\leq\eta}\binom{\eta}{\mu}\int_{\partial\Omega} \nu(y)\cdot (y-x) \times
\nonumber\\[4pt]
&\hspace{3cm}\times\partial_{x}^{\mu}\bigg[\frac{1}{|x-y|^{n}}\bigg]\,\partial_{x}^{\eta-\mu}
\Bigg(\sum_{|\gamma|=m}\frac{(x-y)^{\gamma}}{\gamma!}\,f_{\gamma}(y)\Bigg)d\sigma(y).
\end{align}
In addition, iterating \eqref{Mskjalkfsjdg} gives that for each $\mu\leq\eta$ and at $\sigma$-a.e. point $y\in\partial\Omega$ we have
\begin{align*}
\partial_{x}^{\eta-\mu}\Bigg(\sum_{|\gamma|=m}\frac{(x-y)^{\gamma}}{\gamma!}\, f_{\gamma}(y)\Bigg)
=\sum_{|\gamma|=m-|\eta-\mu|}\frac{(x-y)^{\gamma}}{\gamma!}\, f_{\gamma+\eta-\mu}(y)
=\sum_{|\gamma|=|\mu|}\frac{(x-y)^{\gamma}}{\gamma!}\, f_{\gamma+\eta-\mu}(y),
\end{align*}
where we have used that $|\eta-\mu|=m-|\mu|$. Thus, the right-hand side of \eqref{Qhdjkslasgh}, 
which corresponds to the last line on the right-hand side of \eqref{Yygfgfgbf}, is equal to
\begin{align}\label{SAfdsdgsfdgdfsgh}
\frac{1}{\omega_{n-1}}\sum_{\mu\leq\eta}\sum_{|\gamma|=|\mu|}\binom{\eta}{\mu}\int_{\partial\Omega} \nu(y)\cdot (y-x)
\partial_{x}^{\mu}\bigg[\frac{1}{|x-y|^{n}}\bigg]\frac{(x-y)^{\gamma}}{\gamma!}\,f_{\gamma+\eta-\mu}(y)\,d\sigma(y).
\end{align}
Now \eqref{Induchypsdf} follows by adding the first line on the right-hand side of \eqref{Yygfgfgbf} to the expression 
in \eqref{SAfdsdgsfdgdfsgh}. This finishes the proof of Theorem~\ref{Sfnsmdnf}.
\end{proof}

\begin{theorem}\label{Thjdsbgkjbsdgkbh}
Fix $m,n\in\mathbb{N}$ with $n\geq 2$ and let $\Omega\subseteq\mathbb{R}^{n}$ be an Ahlfors regular domain.
Denote by $\nu$ its geometric measure-theoretic outward unit normal, and set $\sigma:=\mathcal{H}^{n-1}\lfloor{\partial\Omega}$. 
Then the family $\big\{k_{\lambda\beta}\big\}_{|\beta|=|\lambda|=m-1}$ from \eqref{Kehsdbg-THM} has the properties listed in 
\eqref{HShagdfsdg-THM}-\eqref{eq:ugt-igFRD}, and for each Whitney array $\dot{f}\in{\rm WA}_{m-1}[L_{\rm comp}^{1}(\partial\Omega,\sigma)]$ and 
each multi-index $\lambda\in\mathbb{N}_{0}^{n}$ with $|\lambda|=m-1$ one has
\begin{equation}\label{Thm-MSafddfgdsa}
\partial^{\lambda}\big(\dot{\mathcal{D}}_{\Delta^{m}}\dot{f}\big)(x)=\sum_{|\beta|=m-1}\int_{\partial\Omega}
\nu(y)\cdot (y-x)\, k_{\lambda\beta}(x-y)f_{\beta}(y)\,d\sigma(y)\,\text{ for all $x\in\mathbb{R}^{n}\setminus\partial\Omega$.}
\end{equation}
\end{theorem}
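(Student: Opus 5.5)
The plan is to derive the representation formula \eqref{Thm-MSafddfgdsa} directly from Theorem~\ref{Sfnsmdnf} by reindexing, and then to check the kernel properties \eqref{HShagdfsdg-THM}--\eqref{eq:ugt-igFRD} one by one.

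\textbf{Reindexing.} In \eqref{Eqwnwiefn} I introduce the new index $\beta:=\gamma+\lambda-\mu$.
\begin{itemize}
\item Since $|\gamma|=|\mu|$ and $|\lambda|=m-1$, this gives $|\beta|=m-1$.
\item Conversely, for $|\beta|=m-1$ and $\mu\leq\lambda$, the multi-index $\gamma=\beta+\mu-\lambda$ lies in $\mathbb{N}_0^n$ exactly when $\lambda\leq\mu+\beta$. It then automatically satisfies $|\gamma|=|\mu|$.
\end{itemize}
So the double sum $\sum_{\mu\leq\lambda}\sum_{|\gamma|=|\mu|}$ becomes $\sum_{|\beta|=m-1}\sum_{\mu\leq\lambda\leq\mu+\beta}$. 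With $z:=x-y$, the integrand becomes $\nu(y)\cdot(y-x)\,k_{\lambda\beta}(x-y)f_\beta(y)$, where $k_{\lambda\beta}$ is given by \eqref{Kehsdbg-THM}. Interchanging the finite sums with the integral proves \eqref{Thm-MSafddfgdsa}.

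\textbf{Elementary kernel properties.} Smoothness on $\mathbb{R}^n\setminus\{0\}$ is clear from \eqref{Kehsdbg-THM}.
\begin{itemize}
\item Homogeneity: $\partial^\mu[|z|^{-n}]$ is homogeneous of degree $-n-|\mu|$, and $z^{\beta+\mu-\lambda}$ has degree $|\mu|$. Each term is therefore homogeneous of degree $-n$.
\item Evenness: the parities of the two factors are $(-1)^{|\mu|}$ and $(-1)^{|\mu|}$, so each product is even.
\end{itemize}

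\textbf{Normalization \eqref{eq:ugt-igFRD}.} I plan to use Proposition~\ref{MAahsifuhfiouh} instead of computing spherical moments. Fix $\beta$ with $|\beta|=m-1$, a point $x\in\mathbb{R}^n$ and $r>0$. Take $\Omega:=B(x,r)$, a bounded Ahlfors regular domain, and $P(z):=(z-x)^\beta/\beta!\in\mathcal{P}_{m-1}$. Then $\dot P\in{\rm WA}_{m-1}[L^1_{\rm comp}]$, since traces of derivatives of a smooth function satisfy \eqref{CC}. Its top-order components are $(\partial^{\beta'}P)|_{\partial\Omega}=\delta_{\beta\beta'}$.

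Proposition~\ref{MAahsifuhfiouh} gives $\dot{\mathcal{D}}_{\Delta^m}\dot P=P$ in $\Omega$, hence $\partial^\lambda(\dot{\mathcal{D}}_{\Delta^m}\dot P)(x)=\delta_{\lambda\beta}$. On the other hand, \eqref{Thm-MSafddfgdsa} evaluated at the center, where $\nu(y)\cdot(y-x)=r$, gives
\begin{equation*}
\delta_{\lambda\beta}=\int_{\partial B(x,r)}r\,k_{\lambda\beta}(x-y)\,d\sigma(y)=\int_{S^{n-1}}k_{\lambda\beta}\,d\mathcal{H}^{n-1}.
\end{equation*}
The last equality uses the homogeneity of degree $-n$ together with evenness.

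\textbf{Polyharmonicity of $z\,k_{\lambda\beta}(z)$.} This is the step I expect to be the main obstacle. Each component $z_j k_{\lambda\beta}(z)$ is a finite sum of terms $z_j\,\partial^\mu[|z|^{-n}]\,z^{\alpha}$ with $|\alpha|=|\mu|\leq m-1$. The first thing I would try is the commutator identity
\begin{equation*}
z_j\,\partial^\mu[|z|^{-n}]=\partial^\mu\big[z_j|z|^{-n}\big]-\mu_j\,\partial^{\mu-e_j}[|z|^{-n}].
\end{equation*}
Here $\partial^\mu[z_j|z|^{-n}]$ is harmonic, being a derivative of $\omega_{n-1}\partial_jE_\Delta$. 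Multiplied by a polynomial of degree $\leq m-1$, it is annihilated by $\Delta^m$ by \eqref{HSjhagsdfjhgsgf}.

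The leftover terms $\partial^{\mu-e_j}[|z|^{-n}]\,z^\alpha$ are not individually polyharmonic. My plan for them is to collect them across all $\mu$ in \eqref{Kehsdbg-THM} and show that they reassemble into sums of the same harmonic-times-polynomial type. I would do this using $|z|^{-n}z_i=\omega_{n-1}\partial_iE_\Delta$ after multiplying by a suitable $z_i$, and Euler's identity for homogeneous functions.

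If that bookkeeping becomes unwieldy, I would switch to a structural argument. The vector function $z\,k_{\lambda\beta}(z)$ is the $y$-kernel of $\partial^\lambda_x$ applied to the polyharmonic kernels $\frac{y-x}{|x-y|^n}\frac{(x-y)^\gamma}{\gamma!}$, modulo the tangential integrations by parts used in Theorem~\ref{Sfnsmdnf}. These integrations by parts only move $x$-independent tangential derivatives onto $\dot f$, so each intermediate kernel arising in the proof stays $\Delta^m_x$-null.
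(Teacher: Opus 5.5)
\textbf{Parts that are correct.} Your reindexing and your checks of smoothness, homogeneity and evenness match the paper. Your normalization argument is also correct, but it takes a different route from the paper. You apply \eqref{Thm-MSafddfgdsa} to $\dot P$, with $P(z)=(z-x)^\beta/\beta!$, on the ball $B(x,r)$, and evaluate at its center, where $\nu(y)\cdot(y-x)\equiv r$. This yields $\int_{S^{n-1}}k_{\lambda\beta}\,d\mathcal{H}^{n-1}=\delta_{\lambda\beta}$ using only Proposition~\ref{MAahsifuhfiouh} and scaling. The paper instead passes to a UR domain and compares two computations of the jump of $\partial^\lambda\dot{\mathcal{D}}_{\Delta^m}\dot P$ across the boundary: one from the jump formula for chord-dot-normal singular integrals, the other from Proposition~\ref{MAahsifuhfiouh}, which predicts the jump $\partial^\lambda P$. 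Your route avoids boundary traces and jump relations entirely, so it is more elementary.

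\textbf{The gap: polyharmonicity of $z\,k_{\lambda\beta}(z)$.} You leave this unproved, and your stated reason for needing cross-$\mu$ bookkeeping is false. The leftover terms $\mu_j z^\alpha\partial^{\mu-e_j}[|z|^{-n}]$ \emph{are} individually annihilated by $\Delta^m$. The missing observation is that such a term has exactly the shape of the one you started from: a monomial of degree one more than the order of the derivative falling on $|z|^{-n}$ (here $|\alpha|=|\mu-e_j|+1$).

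So one proves, by induction on $|\mu|$, that $\Delta^m_z\big(z^\omega\partial^\mu_z[|z|^{-n}]\big)=0$ in $\mathbb{R}^n\setminus\{0\}$ whenever $|\omega|=|\mu|+1\leq m$. For $|\mu|=0$, the function $z_i|z|^{-n}$ is harmonic. For the inductive step, pick $j\in\operatorname{supp}\omega$ and apply your own commutator identity to write the function as
$z^{\omega-e_j}\partial^\mu[z_j|z|^{-n}]-\mu_j z^{\omega-e_j}\partial^{\mu-e_j}[|z|^{-n}]$.
The first piece is a harmonic function times a polynomial of degree $|\mu|\leq m-1$, hence null by \eqref{HSjhagsdfjhgsgf}. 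The second piece falls under the induction hypothesis. Applying this with $\omega=\beta+\mu-\lambda+e_i$ handles every summand of $z_ik_{\lambda\beta}(z)$; this is the paper's argument.

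Your fallback ``structural'' argument does not close the gap either. Knowing that $x\mapsto\partial^\lambda(\dot{\mathcal{D}}_{\Delta^m}\dot f)(x)$ is polyharmonic for every Whitney array says nothing about the individual kernels $z\,k_{\lambda\beta}(z)$, for three reasons. First, kernels paired against $\nu(y)$ and arrays satisfying \eqref{CC} are far from unique; the tangential integration by parts in the proof of Theorem~\ref{Sfnsmdnf} trades one kernel for another. Second, the top-order components $f_\beta$ are tied together by \eqref{CC}, so you cannot localize in $\beta$. Third, the grouping into the $k_{\lambda\beta}$ arises from Leibniz splittings and reindexing, and the nullity of those individual pieces is precisely what has to be checked.
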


\begin{proof} 
Fix $\lambda\in\mathbb{N}_{0}^{n}$ with $|\lambda|=m-1$ and observe that, for each $\mu\in\mathbb{N}_{0}^{n}$
such that $\mu\leq\lambda$, the mapping $\gamma\mapsto\lambda-\mu+\gamma$ from  
$\{\gamma\in\mathbb{N}_{0}^{n}:|\gamma|=|\mu|\}$ into $\{\beta\in\mathbb{N}_{0}^{n}:\,|\beta|=m-1$ and 
$\lambda\leq\mu+\beta\}$ is a bijection. As a consequence, upon introducing the multi-index
\begin{equation}\label{Nsjhafjhdsga}
\beta:=\gamma+\lambda-\mu\,\,\,\,\text{for each $\gamma\in\mathbb{N}_{0}^{n}$ with $|\gamma|=|\mu|$},
\end{equation}
we may rewrite \eqref{Eqwnwiefn} as 
\begin{align}\label{Safkdsfgsdfgsdf}
\partial^{\lambda}(\dot{\mathcal{D}}_{\Delta^{m}}\dot{f})(x)
&=\frac{1}{\omega_{n-1}}\sum_{|\beta|=m-1}\,\sum_{\mu\leq\lambda\leq\mu+\beta}\frac{m}{|\mu|+1}\binom{\lambda}{\mu}
\int_{\partial\Omega} \nu(y)\cdot (y-x)\times 
\nonumber\\
&\hspace{2cm}\times\partial_{x}^{\mu}\bigg[\frac{1}{|x-y|^{n}}\bigg]
\frac{(x-y)^{\beta+\mu-\lambda}}{(\beta+\mu-\lambda)!}\,f_{\beta}(y)\,d\sigma(y)
\end{align}
for each $x\in\mathbb{R}^{n}\setminus\partial\Omega$ and each 
$\dot{f}=\{f_{\gamma}\}_{|\gamma|\leq m-1}\in{\rm WA}_{m-1}[L_{\rm comp}^{1}(\partial\Omega,\sigma)]$.
Now \eqref{Thm-MSafddfgdsa} follows from \eqref{Safkdsfgsdfgsdf} upon recalling \eqref{Kehsdbg-THM}. 

To show that the properties in \eqref{HShagdfsdg-THM} are satisfied, fix $\lambda,\beta\in\mathbb{N}_{0}^{n}$ 
with $|\lambda|=|\beta|=m-1$. An inspection of \eqref{Kehsdbg-THM} reveals that $k_{\lambda\beta}$ belongs to 
$\mathscr{C}^{\infty}(\mathbb{R}^{n}\setminus\{0\})$, is even, and positive homogeneous of degree $-n$. 
Let us check that $z\,k_{\lambda\beta}(z)$ is a null-solution of $\Delta^{m}$ in $\mathbb{R}^{n}\setminus\{0\}$. 
Given \eqref{Kehsdbg-THM}, it suffices to show that $z$ times each summand in $k_{\lambda\beta}(z)$ has this property. 
That is, we need to prove that for each $\mu\in{\mathbb{N}}_0^n$ with $\mu\leq\lambda\leq\mu+\beta$ and each $j\in\{1,\dots,n\}$, we have 
\begin{equation}\label{Pihbn}
\Delta^m_z\Bigg(z^{\beta-\lambda+\mu+e_j}\partial^{\mu}_z\bigg[\frac{1}{|z|^{n}}\bigg]\Bigg)=0\,
\text{ in }\,\mathbb{R}^{n}\setminus\{0\}.
\end{equation}
The latter is a consequence of the following claim:
\begin{equation}\label{bGDSWw}
\begin{array}{c}
\text{if $\mu,\omega\in{\mathbb{N}}_0^n$ satisfy $|\mu|\leq m-1$ and $|\omega|=|\mu|+1$}
\\[4pt]
\displaystyle
\text{then 
$\Delta_z^m\Bigg(z^\omega\partial_z^\mu\bigg[\frac{1}{|z|^{n}}\bigg]\Bigg)=0$ in $\mathbb{R}^{n}\setminus\{0\}$.
}
\end{array}
\end{equation}
To see why \eqref{bGDSWw} is true, fix $\mu,\omega$ as in \eqref{bGDSWw}, pick $j\in{\rm supp}\,\omega$ and set 
$\widetilde{\omega}:=\omega-e_j$. Then Leibniz's formula gives
\begin{equation}\label{Ksjdhfgksjdhg.bb}
z^\omega\,\partial^{\mu}_z\bigg[\frac{1}{|z|^{n}}\bigg]
=z^{\widetilde{\omega}}\partial^{\mu}_z\bigg[\frac{z_{j}}{|z|^{n}}\bigg]
-\mu_{j}z^{\widetilde{\omega}}\,\partial^{\mu-e_{j}}_z\bigg[\frac{1}{|z|^{n}}\bigg]
\,\,\,\,\text{in $\mathbb{R}^{n}\setminus\{0\}$.}
\end{equation}
Now \eqref{bGDSWw} follows from \eqref{Ksjdhfgksjdhg.bb} and \eqref{HSjhagsdfjhgsgf} by induction on $|\mu|$ upon noting that 
$z^{\widetilde{\omega}}\in\mathcal{P}_{m-1}$ and $\frac{z_{j}}{|z|^{n}}$ is harmonic in $\mathbb{R}^{n}\setminus\{0\}$. 

To complete the proof of the theorem, it remains to verify that \eqref{eq:ugt-igFRD} also holds. Since this is a purely algebraic 
condition, independent of the underlying domain, we may strengthen the original assumptions by assuming that $\Omega$ is a {\rm UR} domain with compact boundary.
Denote $\Omega_{+}:=\Omega$ and $\Omega_{-}:=\mathbb{R}^{n}\setminus\overline{\Omega}$ and fix $\lambda\in\mathbb{N}_{0}^{n}$ 
with $|\lambda|=m-1$ along with some $P\in\mathcal{P}_{m-1}$. On the one hand, \eqref{HShagdfsdg-THM} and \eqref{Thm-MSafddfgdsa}
allow us to invoke \cite[(5.2.138), p.\,624]{GHA.IV} to obtain
\begin{equation}\label{Qwefhsdhfihgh}
(\partial^{\lambda}\dot{\mathcal{D}}_{\Delta^{m}}\dot{P})\big|_{\partial\Omega_{+}}^{{}^{\kappa\text{\rm -nt}}}
-(\partial^{\lambda}\dot{\mathcal{D}}_{\Delta^{m}}\dot{P})\big|_{\partial\Omega_{-}}^{{}^{\kappa\text{\rm -nt}}}
=\sum_{|\beta|=m-1}\big(\partial^{\beta}P\big)\int_{S^{n-1}}k_{\lambda\beta}\,d\mathcal{H}^{n-1}
\end{equation}
at $\sigma$-a.e. point on $\partial\Omega$. On the other hand, Proposition~\ref{MAahsifuhfiouh} ensures that
\begin{equation}\label{QWyshdfghjg}
(\partial^{\lambda}\dot{\mathcal{D}}_{\Delta^{m}}\dot{P})\big|_{\partial\Omega_{+}}^{{}^{\kappa\text{\rm -nt}}}
-(\partial^{\lambda}\dot{\mathcal{D}}_{\Delta^{m}}\dot{P})\big|_{\partial\Omega_{-}}^{{}^{\kappa\text{\rm -nt}}}
=\partial^{\lambda}P
\quad\text{at $\sigma$-a.e. point on $\partial\Omega$.}
\end{equation}
Comparing \eqref{Qwefhsdhfihgh} with \eqref{QWyshdfghjg} yields
\begin{equation}\label{Uyugsgshdgf}
\partial^{\lambda}P=\sum_{|\beta|=m-1}\big(\partial^{\beta}P\big)\int_{S^{n-1}}k_{\lambda\beta}\,d\mathcal{H}^{n-1}
\quad\text{at $\sigma$-a.e. point on $\partial\Omega$.}
\end{equation}
Specializing \eqref{Uyugsgshdgf} to the case when $P(x):=x^{\lambda}$ forces 
$\int_{S^{n-1}}k_{\lambda\beta}\,d\mathcal{H}^{n-1}=\delta_{\lambda\beta}$, as wanted.
\end{proof}

\begin{lemma}\label{NShhgsfhgiguh}
Fix $n\in\mathbb{N}$ with $n\geq 2$, and recall that $E_\Delta$ stands for the canonical fundamental solution for the Laplacian in $\mathbb{R}^{n}$. 
For each $\gamma\in\mathbb{N}_{0}^{n}$, define the vector-valued function
\begin{equation}\label{JSKLasdglskgkg}
\vec{F}_{\gamma}(x):=\frac{1}{\omega_{n-1}}\frac{x}{|x|^{n}}\frac{x^{\gamma}}{\gamma!}
=(\nabla E_\Delta)(x)\,\frac{x^{\gamma}}{\gamma!}\,\,\text{ for every }\,\,x\in\mathbb{R}^{n}\setminus\{0\}.
\end{equation}

Then the following are true.
\begin{enumerate}
\item[(i)] Given any $\gamma\in\mathbb{N}_{0}^{n}$, one has 
$\vec{F}_{\gamma}\in[\mathscr{C}^{\infty}(\mathbb{R}^{n}\setminus\{0\})]^{n}$, the components of $\vec{F}_{\gamma}$ are positive homogeneous 
of degree $1-n+|\gamma|$, and $\Delta^{m}\vec{F}_{\gamma}=0$ in $\mathbb{R}^{n}\setminus\{0\}$ whenever $|\gamma|\leq m-1$.

\vskip 0.08in
\item[(ii)] For each $\gamma\in\mathbb{N}_{0}^{n}$ and each multi-index $\alpha\in\mathbb{N}_{0}^{n}$ satisfying 
$|\alpha|\geq|\gamma|+1-n$ one has
\begin{equation}\label{SAakdfgsdgh}
\big|\partial^{\alpha}\vec{F}_{\gamma}(x)\big|
\lesssim\frac{1}{1+|x|^{n-1-|\gamma|+|\alpha|}}
\,\,\,\,\text{uniformly for $x\in\mathbb{R}^{n}$ away from the origin.}
\end{equation}
\end{enumerate}
\end{lemma}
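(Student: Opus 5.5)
For part (i), I will read the smoothness and homogeneity straight off the formula. On $\mathbb{R}^n\setminus\{0\}$, each component $\frac{1}{\omega_{n-1}}\frac{x_j}{|x|^n}\frac{x^\gamma}{\gamma!}$ is a product of smooth functions. The factor $x_j|x|^{-n}$ is homogeneous of degree $1-n$ and $x^\gamma$ is homogeneous of degree $|\gamma|$, so the product has degree $1-n+|\gamma|$. For the polyharmonicity I will invoke \eqref{HSjhagsdfjhgsgf}. Each component $\partial_j E_\Delta$ is harmonic in $\mathbb{R}^n\setminus\{0\}$, and $x^\gamma/\gamma!\in\mathcal{P}_{m-1}$ whenever $|\gamma|\leq m-1$. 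Hence each component of $\vec{F}_\gamma$ is a null-solution of $\Delta^m$ in $\mathbb{R}^n\setminus\{0\}$.

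For part (ii), I will use homogeneity. Since $\vec F_\gamma$ is smooth and homogeneous of degree $d:=1-n+|\gamma|$, each derivative $\partial^\alpha\vec F_\gamma$ is smooth and homogeneous of degree $d-|\alpha|$ on $\mathbb{R}^n\setminus\{0\}$. This gives $|\partial^\alpha\vec F_\gamma(x)|\leq C_\alpha|x|^{-(n-1-|\gamma|+|\alpha|)}$ with $C_\alpha:=\sup_{S^{n-1}}|\partial^\alpha\vec F_\gamma|<\infty$. Write $s:=n-1-|\gamma|+|\alpha|$; the hypothesis $|\alpha|\geq|\gamma|+1-n$ says exactly that $s\geq 0$.

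The phrase ``uniformly for $x$ away from the origin'' I will interpret as $|x|\geq\varepsilon$ for a fixed $\varepsilon>0$, with the implicit constant allowed to depend on $\varepsilon$, $\alpha$, $\gamma$, $n$. I then split into two regions:
\begin{itemize}
\item For $|x|\geq 1$: $1+|x|^s\leq 2|x|^s$, so $|x|^{-s}\leq 2(1+|x|^s)^{-1}$.
\item For $\varepsilon\leq|x|\leq1$: $|x|^{-s}\leq\varepsilon^{-s}$ (this is where $s\geq0$ is used), while $(1+|x|^s)^{-1}\geq 1/2$. So $|x|^{-s}\leq 2\varepsilon^{-s}(1+|x|^s)^{-1}$.
\end{itemize}
Combining the two regions yields \eqref{SAakdfgsdgh}.

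The only step needing any care is this region $\varepsilon\leq|x|\leq1$: one must observe that the condition on $|\alpha|$ is precisely what prevents the bound $|x|^{-s}$ from blowing up at infinity relative to $1$ (when $s<0$ the right side $\frac{1}{1+|x|^s}$ would tend to $1$ while $|x|^{-s}$ grows). Otherwise the argument is routine.
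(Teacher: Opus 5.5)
Your proof is correct and takes the paper's route: the paper derives $\Delta^m\vec F_\gamma=0$ from \eqref{HSjhagsdfjhgsgf} exactly as you do, and calls the smoothness, homogeneity and decay claims standard, which is the homogeneity-plus-splitting argument you supply. One small correction to your closing commentary: the hypothesis $s\geq 0$ is genuinely needed only for $|x|\geq 1$, in the step $1+|x|^s\leq 2|x|^s$. On the annulus $\varepsilon\leq|x|\leq 1$ the estimate holds for every real $s$ by compactness, so the delicate step is at infinity, not in that region.
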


\begin{proof}
That $\Delta^{m}\vec{F}_{\gamma}=0$ in $\mathbb{R}^{n}\setminus\{0\}$ whenever $|\gamma|\leq m-1$ is seen from \eqref{JSKLasdglskgkg}
and \eqref{HSjhagsdfjhgsgf}. The justification of all other claims is standard.
\end{proof}

\begin{lemma}\label{Propsdewhfgoejhg} 
Let $n\in\mathbb{N}$ with $n\geq 2$ and fix a reference point $x_{\ast}\in\mathbb{R}^{n}$.
For each $\gamma\in\mathbb{N}_{0}^{n}$ set 
\begin{equation}\label{Qwfghjfhjf}
\begin{array}{c}
\displaystyle\vec{H}_{\gamma}(x,y):=\vec{F}_{\gamma}(x-y)
-{\mathbb{P}}_{m-2,x_\ast}\big[\vec{F}_{\gamma}(\cdot-y)\big](x)
\\ \noalign{\vskip 4pt}
\text{for all points $x,y\in\mathbb{R}^{n}$ such that $y\notin\{x,x_\ast\}$}
\end{array}
\end{equation}
where $\vec{F}_{\gamma}$ is as in \eqref{JSKLasdglskgkg} and ${\mathbb{P}}_{m-2,x_\ast}$ is the 
functor defined in \eqref{mxcvbprm3fdh.PP-INTRO} with $\ell:=m-2$. 

Then for each $\gamma\in\mathbb{N}_{0}^{n}$ with $|\gamma|\leq m-1$ and each $y\in\mathbb{R}^{n}\setminus\{x_{\ast}\}$, 
\begin{equation}\label{Mjashfgshjjj}
\Delta^{m}\big[\vec{H}_{\gamma}(\cdot,y)\big]=0\,\text{ in }\,\mathbb{R}^{n}\setminus\{y\}.
\end{equation}

Also, if $\gamma\in\mathbb{N}_{0}^{n}$ has $|\gamma|\leq n+m-2$, then for each $\alpha\in\mathbb{N}_{0}^{n}$ and $x\in\mathbb{R}^{n}$ one has
\begin{equation}\label{Wfwgdh}
\begin{array}{c}
\displaystyle\Big|\partial_{x}^{\alpha}\big[\vec{H}_{\gamma}(x,y)\big]\Big|
\lesssim_{x,x_{\ast}}\frac{1}{\big(1+|y|\big)^{n-1-|\gamma|+\max\{m-1,\,|\alpha|\}}}
\\ \noalign{\vskip 6pt}
\text{uniformly for $y\in\mathbb{R}^{n}$ away from $x$ and $x_{\ast}$.}
\end{array}
\end{equation}
\end{lemma}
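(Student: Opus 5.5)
The plan has two parts, matching the two assertions of the lemma.

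\emph{The null-solution property \eqref{Mjashfgshjjj}.} Fix $y\neq x_\ast$ and $|\gamma|\leq m-1$. By Lemma~\ref{NShhgsfhgiguh}(i), the function $x\mapsto\vec{F}_\gamma(x-y)$ is smooth in $\mathbb{R}^n\setminus\{y\}$ and annihilated by $\Delta^m$ there. Since $y\neq x_\ast$, it is smooth near $x_\ast$, so ${\mathbb{P}}_{m-2,x_\ast}[\vec{F}_\gamma(\cdot-y)]$ is well defined. This Taylor term is a polynomial of degree $\leq m-2$ in $x$, hence it is killed by $\Delta^{m}$ (its degree is $<2m$). Subtracting the two pieces gives \eqref{Mjashfgshjjj}.

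\emph{The decay estimate \eqref{Wfwgdh}.} Fix $x$ and $x_\ast$, and let $|y|\to\infty$; for bounded $y$ away from $x,x_\ast$ the estimate is trivial by continuity and compactness, so only large $|y|$ matters. I would split into two cases.

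\textbf{Case $|\alpha|\geq m-1$.} Here $\partial_x^\alpha$ kills the Taylor polynomial, which has degree $\leq m-2$. We are left with $\partial^\alpha\vec{F}_\gamma(x-y)$. Since $|\alpha|\geq m-1\geq |\gamma|+1-n$ (recall $|\gamma|\leq n+m-2$), Lemma~\ref{NShhgsfhgiguh}(ii) applies and gives the bound $(1+|x-y|)^{-(n-1-|\gamma|+|\alpha|)}\approx(1+|y|)^{-(n-1-|\gamma|+|\alpha|)}$.

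\textbf{Case $|\alpha|\leq m-2$.} Differentiating the Taylor polynomial commutes with truncation: $\partial^\alpha{\mathbb{P}}_{m-2,x_\ast}[w]={\mathbb{P}}_{m-2-|\alpha|,x_\ast}[\partial^\alpha w]$. Hence $\partial_x^\alpha\vec H_\gamma(x,y)$ is the Taylor remainder of order $k:=m-2-|\alpha|$ of $g:=(\partial^\alpha\vec{F}_\gamma)(\cdot-y)$ about $x_\ast$, evaluated at $x$. For $|y|$ large, the segment $[x_\ast,x]$ stays at distance $\approx|y|$ from $y$. The integral form of the remainder bounds it by
\begin{equation*}
C|x-x_\ast|^{k+1}\sup_{z\in[x_\ast,x]}\big|\nabla^{k+1}g(z)\big|
=C\sup_{z\in[x_\ast,x]}\big|\nabla^{m-1}\vec F_\gamma(z-y)\big| .
\end{equation*}
This derivative has total order $|\alpha|+k+1=m-1$, which again satisfies $m-1\geq|\gamma|+1-n$. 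So Lemma~\ref{NShhgsfhgiguh}(ii) yields the bound $(1+|y|)^{-(n-1-|\gamma|+m-1)}$, which is the claimed exponent with $\max\{m-1,|\alpha|\}=m-1$.

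The main obstacle is the bookkeeping in the second case. One must check that the remainder structure matches the $\max$ in the exponent, and that the hypothesis $|\gamma|\leq n+m-2$ is exactly what licenses Lemma~\ref{NShhgsfhgiguh}(ii) with derivative order $m-1$. One must also confirm that the implicit constants depend only on $x$, $x_\ast$, $\alpha$, $\gamma$. The rest follows directly from the stated lemma.
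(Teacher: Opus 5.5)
Your proposal is correct and follows essentially the same route as the paper. The paper also gets the null-solution property from Lemma~\ref{NShhgsfhgiguh}(i) plus the fact that the Taylor term has degree at most $m-2$, and it proves the decay estimate via $\partial^\alpha{\mathbb{P}}_{m-2,x_\ast}={\mathbb{P}}_{m-2-|\alpha|,x_\ast}\partial^\alpha$ and the same two cases. In the case $|\alpha|\leq m-2$ it uses the integral-form Taylor remainder, which involves only derivatives of order $m-1$, with $y$ kept away from the segment $[x,x_\ast]$.
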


\begin{proof} 
The identity in \eqref{Mjashfgshjjj} is a consequence of item {\it (i)} in Lemma~\ref{NShhgsfhgiguh} and the 
fact that ${\mathbb{P}}_{m-2,x_\ast}\big[\vec{F}_{\gamma}(\cdot-y)\big]$ is a polynomial of degree $\leq m-2$.
To justify \eqref{Wfwgdh}, fix $\gamma\in\mathbb{N}_{0}^{n}$ with $|\gamma|\leq n+m-2$. Also, let $\alpha\in\mathbb{N}_{0}^{n}$ 
and $x\in\mathbb{R}^{n}$ be arbitrary, then consider $y\in\mathbb{R}^{n}$ away from $x$ and $x_{\ast}$. The definition in \eqref{Qwfghjfhjf} entails 
\begin{equation}\label{Qwfghjfhjf-nn}
\partial_{x}^{\alpha}\big[\vec{H}_{\gamma}(x,y)\big]=(\partial^{\alpha}\vec{F}_{\gamma})(x-y)
-{\mathbb{P}}_{m-2-|\alpha|,x_\ast}\big[(\partial^{\alpha}\vec{F}_{\gamma})(\cdot-y)\big](x).
\end{equation}
If $|\alpha|\geq m-1$ then the last term above drops out, and the desired estimate is provided by item {\it (ii)} in Lemma~\ref{NShhgsfhgiguh}.
Consider now $|\alpha|\leq m-2$. Adjusting constants, it suffices to treat the case when $y$ stays away from the line segment $[x,x_\ast]$. 
Then from \eqref{Qwfghjfhjf-nn} and Taylor's formula with integral remainder we see that 
\begin{align}\label{Qwfghjfhjf-nn.2}
\partial_{x}^{\alpha}\big[\vec{H}_{\gamma}(x,y)\big] &=\sum_{|\beta|=m-1-|\alpha|}\frac{m-1-|\alpha|}{\beta!}
\int_{0}^{1}(1-t)^{m-2-|\alpha|}(x-x_{\ast})^{\beta}\times 
\nonumber\\
&\qquad\qquad\times\big(\partial^{\alpha+\beta}\vec{F}_{\gamma}\big)\big(tx+(1-t)x_{\ast}-y\big)\,dt, 
\end{align}
and the desired estimate is once again provided by item {\it (ii)} in Lemma~\ref{NShhgsfhgiguh}.
\end{proof}

\begin{definition}\label{modified-mahdhasd} 
Fix $n,m\in\mathbb{N}$ with $n\geq 2$ and suppose $\Omega\subseteq\mathbb{R}^{n}$ is an Ahlfors regular domain. Abbreviate 
$\sigma:=\mathcal{H}^{n-1}\lfloor{\partial\Omega}$ and denote by $\nu$ the geometric measure theoretic outward unit normal to 
$\Omega$. Having fixed a point $x_\ast\in{\mathbb{R}}^n\setminus\partial\Omega$, 
recall the piece of notation introduced in \eqref{Qwfghjfhjf}. In this context, for each family 
$\dot{f}=\{f_{\gamma}\}_{|\gamma|\leq m-1}$ of complex-valued functions with the property that
\begin{equation}\label{QWdfwsfdfg}
f_\gamma\in L^1\bigg(\partial\Omega,\frac{\sigma(x)}{1+|x|^{n+m-2-|\gamma|}}\bigg)
\,\,\text{ for each $\gamma\in{\mathbb{N}}_0^n$ with $|\gamma|\leq m-1$}, 
\end{equation}
define 
\begin{align}\label{modfeiugfh}
(\dot{\mathfrak{D}}_{\Delta^{m}}\,\dot{f})(x):=-\sum_{|\gamma|\leq m-1}\,\,\,
\int\limits_{\partial\Omega}\nu(y)\cdot\vec{H}_{\gamma}(x,y)\, f_{\gamma}(y)d\sigma(y)\,\text{ for each }\,x\in\mathbb{R}^{n}\setminus\partial\Omega.
\end{align}
\end{definition}

Call $\dot{\mathfrak{D}}_{\Delta^{m}}$ the {\tt boundary}-{\tt to}-{\tt domain} {\tt modified} {\tt multi}-{\tt layer} 
{\tt potential} {\tt operator} associated with the polylaplacian $\Delta^m$ and the set $\Omega$. For each family $\dot{f}=\{f_\gamma\}_{|\gamma|\leq m-1}$ 
of functions as in \eqref{QWdfwsfdfg}, the estimate in \eqref{Wfwgdh} ensures that $\dot{\mathfrak{D}}_{\Delta^{m}}\dot{f}$ is well defined via 
absolutely convergent integrals. Moreover, one may differentiate $\dot{\mathfrak{D}}_{\Delta^{m}}\dot{f}$ arbitrarily many times under 
the integral sign, since this only improves the decay of the integral kernel. Together with \eqref{Mjashfgshjjj}, this shows that
\begin{equation}\label{modified-multiprojsdg}
\dot{\mathfrak{D}}_{\Delta^{m}}\dot{f}\in{\mathscr{C}}^\infty(\mathbb{R}^{n}\setminus\partial\Omega)\,\,\text{ and }\,\,
\Delta^{m}(\dot{\mathfrak{D}}_{\Delta^{m}}\dot{f})=0\,\,\text{ in }\,\,\mathbb{R}^{n}\setminus\partial\Omega.
\end{equation}

\begin{theorem}\label{Jksjadfhsgh} 
Fix $n,m\in\mathbb{N}$ with $n\geq 2$. Let $\Omega\subseteq\mathbb{R}^{n}$ be an Ahlfors regular domain, denote by $\nu$ its 
geometric measure theoretic outward unit normal, and abbreviate $\sigma:=\mathcal{H}^{n-1}\lfloor{\partial\Omega}$. In this context, 
let $\dot{\mathfrak{D}}_{\Delta^{m}}$ be the boundary-to-domain modified multi-layer potential operator associated with the set $\Omega$ 
and the polylaplacian $\Delta^m$ as in Definition~\ref{modified-mahdhasd}. Finally, bring in the family 
$\{k_{\lambda\gamma}\}_{|\lambda|=|\gamma|=m-1}$ of functions from \eqref{Kehsdbg-THM} which satisfies 
\eqref{HShagdfsdg-THM}-\eqref{eq:ugt-igFRD} {\rm(}cf. Theorem~\ref{Thjdsbgkjbsdgkbh}{\rm )}.

Then for each family $\dot{f}=\{f_{\gamma}\}_{|\gamma|\leq m-1}$ of complex-valued functions with the property that
\begin{equation}\label{Wdsdghhjjj}
\dot{f}\in({\rm CC})_{m-1}\,\,\text{ and }\,\,f_\gamma\in L^1\bigg(\partial\Omega,\frac{\sigma(x)}{1+|x|^{n+m-2-|\gamma|}}\bigg)
\,\,\text{ for each $\gamma\in{\mathbb{N}}_0^n$ with $|\gamma|\leq m-1$}, 
\end{equation}
each multi-index $\lambda\in\mathbb{N}_{0}^{n}$ with $|\lambda|=m-1$, and at each point 
$x\in\mathbb{R}^{n}\setminus\partial\Omega$, one has
\begin{equation}\label{Sadgdsfdfga}
\partial^{\lambda}\big(\dot{\mathfrak{D}}_{\Delta^{m}}\dot{f}\big)(x)
=\sum_{|\gamma|=m-1}\int_{\partial\Omega}\nu(y)\cdot(y-x)\, k_{\lambda\gamma}(x-y)\,f_{\gamma}(y)\,d\sigma(y).
\end{equation}
\end{theorem}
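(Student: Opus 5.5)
The plan is to reduce \eqref{Sadgdsfdfga} to the compactly supported case treated in Theorem~\ref{Thjdsbgkjbsdgkbh}, via a truncation argument that preserves the compatibility conditions.

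\textbf{Step 1: the Taylor correction disappears.} Fix $\lambda$ with $|\lambda|=m-1$ and $x\in\mathbb{R}^n\setminus\partial\Omega$. The polynomial ${\mathbb{P}}_{m-2,x_\ast}[\vec F_\gamma(\cdot-y)]$ has degree $\leq m-2$, so $\partial^\lambda_x$ annihilates it. Estimate \eqref{Wfwgdh} with $|\alpha|=m-1$ gives the decay $(1+|y|)^{-(n+m-2-|\gamma|)}$, which is exactly the weight in \eqref{Wdsdghhjjj}. This justifies differentiating under the integral sign, and yields
\begin{equation*}
\partial^\lambda(\dot{\mathfrak{D}}_{\Delta^m}\dot f)(x)=-\sum_{|\gamma|\leq m-1}\int_{\partial\Omega}\nu(y)\cdot(\partial^\lambda\vec F_\gamma)(x-y)\,f_\gamma(y)\,d\sigma(y).
\end{equation*}
For compactly supported arrays, the same expression equals $\partial^\lambda(\dot{\mathcal{D}}_{\Delta^m}\dot f)(x)$, by \eqref{JHSakglksjdgh_1}.

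\textbf{Step 2: truncate the array.} Pick $\psi\in\mathscr{C}^\infty_c(\mathbb{R}^n)$ with $\psi\equiv1$ on $B(0,1)$ and $\operatorname{supp}\psi\subseteq B(0,2)$, and set $\psi_R:=\psi(\cdot/R)$. Define the Leibniz-type truncated array
\begin{equation*}
f^R_\gamma:=\sum_{\delta\leq\gamma}\binom{\gamma}{\delta}\big(\partial^{\gamma-\delta}\psi_R\big)\big|_{\partial\Omega}\,f_\delta,\qquad |\gamma|\leq m-1,
\end{equation*}
which mimics the jet of the product $\psi_R\cdot u$. Its components lie in $L^1_{\rm comp}$. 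I expect the main obstacle to be checking that $\dot f^R\in({\rm CC})_{m-1}$. For this I would use the product rule for $\partial_{\tau_{jk}}$ (\cite[Proposition~11.1.15, p.\,700]{GHA.II}), the identity $\partial_{\tau_{jk}}(\partial^\theta\psi_R)=\nu_j\partial^{\theta+e_k}\psi_R-\nu_k\partial^{\theta+e_j}\psi_R$, and the conditions \eqref{CC} for $\dot f$. The resulting terms then regroup via Pascal's rule $\binom{\gamma}{\delta}+\binom{\gamma}{\delta-e_k}=\binom{\gamma+e_k}{\delta}$, exactly as in the verification that jets of products are compatible. Granted this, Theorem~\ref{Thjdsbgkjbsdgkbh} and Step 1 give
\begin{equation*}
-\sum_{|\gamma|\leq m-1}\int_{\partial\Omega}\nu\cdot(\partial^\lambda\vec F_\gamma)(x-y)f^R_\gamma\,d\sigma=\sum_{|\gamma|=m-1}\int_{\partial\Omega}\nu(y)\cdot(y-x)k_{\lambda\gamma}(x-y)f^R_\gamma(y)\,d\sigma(y).
\end{equation*}

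\textbf{Step 3: let $R\to\infty$ by dominated convergence.} For fixed $y$, we have $f^R_\gamma(y)\to f_\gamma(y)$, since every term with $\delta\neq\gamma$ vanishes once $R>|y|$. Moreover, those terms are supported in $R\leq|y|\leq2R$ and satisfy $|\partial^{\gamma-\delta}\psi_R|\lesssim R^{-|\gamma-\delta|}\lesssim|y|^{-|\gamma-\delta|}$.

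On the left side, by Lemma~\ref{NShhgsfhgiguh}(ii) each $\delta$-contribution is bounded by
\begin{equation*}
|y|^{-(n+m-2-|\gamma|)}\,|y|^{-(|\gamma|-|\delta|)}\,|f_\delta|=|y|^{-(n+m-2-|\delta|)}\,|f_\delta|,
\end{equation*}
which is integrable by \eqref{Wdsdghhjjj}. On the right side, $|(y-x)k_{\lambda\gamma}(x-y)|\lesssim|y|^{1-n}$ for large $|y|$, and with $|\gamma|=m-1$ the $\delta$-term is bounded by $|y|^{1-n-(m-1-|\delta|)}|f_\delta|$. This is again the admissible weight. Near $x$ there is no issue, since $x\notin\partial\Omega$.

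Passing to the limit and combining with Step 1 yields \eqref{Sadgdsfdfga}.
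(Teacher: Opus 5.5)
Your proposal is correct and follows the paper's proof essentially step by step. It uses the same Leibniz-type truncation $\dot f^R$ (identical to the paper's), the same reduction to Theorem~\ref{Thjdsbgkjbsdgkbh} for compactly supported arrays after the Taylor correction is killed by $\partial^\lambda$, and dominated convergence on both sides with the same weights. Your explicit check that $\dot f^R\in({\rm CC})_{m-1}$, via the product rule for $\partial_{\tau_{jk}}$ and Pascal's rule, fills in a step the paper only asserts.
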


\begin{proof}
Fix a cut-off function $\theta\in\mathscr{C}_c^\infty(\mathbb{R}^n)$ with $0\leq\theta\leq1$, ${\rm supp}\,\theta\subseteq B(0,2)$,
and $\theta\equiv1$ on $B(0,1)$. For each $R>0$ define $\theta_R(x):=\theta(x/R)$ for every $x\in\mathbb{R}^{n}$.
Then $\theta_R\in\mathscr{C}_c^\infty(\mathbb{R}^n)$ with $0\leq\theta_{R}\leq 1$, $\operatorname{supp}\,\theta_R\subseteq B(0,2R)$, 
$\theta_R\equiv1$ on $B(0,R)$, and
\begin{equation}\label{Dfedfnjsdnf}
\big|\partial^\alpha\theta_R(x)\big|\lesssim R^{-|\alpha|}\,{\mathbf{1}}_{|x|\sim R},\,
\text{ for every }\,\alpha\in\mathbb{N}_0^n\,\text{ with }\,|\alpha|>0.
\end{equation}
Fix $\dot f=\{f_\gamma\}_{|\gamma|\leq m-1}$ as in \eqref{Wdsdghhjjj} and for each $R>0$ 
define $\dot{f}^{R}:=\big\{f^R_\gamma\big\}_{|\gamma|\leq m-1}$ by setting
\begin{equation}\label{Mshfggfgtdsdfgx}
f_{\gamma}^{R}:=\theta_{R}f_\gamma+\sum_{\substack{\eta+\delta=\gamma\\ |\eta|>0}}
\frac{\gamma!}{\eta!\,\delta!}(\partial^\eta\theta_R)f_{\delta}\,\text{ for each }\,|\gamma|\leq m-1.
\end{equation}
Observe that \eqref{Wdsdghhjjj} and the properties of $\theta_{R}$ guarantee 
$\dot{f}^{R}\in{\rm WA}_{m-1}[L_{\rm comp}^{1}(\partial\Omega,\sigma)]$ (recall \eqref{WAajasnfksjdnf}).
To proceed, fix $\lambda\in\mathbb{N}_0^n$ with $|\lambda|=m-1$, along with some point $x\in\mathbb{R}^{n}\setminus\partial\Omega$. 
Differentiating under the integral sign allows us to write 
\begin{align}\label{ASdwfwewq}
\partial^\lambda\big(\dot{\mathfrak{D}}_{\Delta^{m}}\dot{f}\big)(x)&=-\int_{\partial\Omega}\sum_{|\gamma|\leq m-1}\nu(y)\cdot
\partial_{x}^\lambda[\vec{H}_{\gamma}(x,y)]\,f_\gamma(y)\,d\sigma(y)\nonumber\\
&=-\int_{\partial\Omega}\sum_{|\gamma|\leq m-1}\nu(y)\cdot
(\partial^\lambda\vec{F}_{\gamma})(x-y)\,f_\gamma(y)\,d\sigma(y).
\end{align}
We make the claim that
\begin{equation}\label{Sccdswedc}
\partial^\lambda\big(\dot{\mathfrak{D}}_{\Delta^{m}}\dot{f}\big)(x)=-\lim_{R\to\infty}\int_{\partial\Omega}\sum_{|\gamma|\leq m-1}
\nu(y)\cdot(\partial^\lambda\vec{F}_{\gamma})(x-y)f_\gamma^R(y)\,d\sigma(y).
\end{equation}
Indeed, using the formula for $f_{\gamma}^{R}$ in \eqref{Mshfggfgtdsdfgx}, it is enough to show that 
\begin{align}\label{Msjhfdfsh}
&\lim_{R\to\infty}\int_{\partial\Omega}\sum_{|\gamma|\leq m-1}\nu(y)\cdot
(\partial^\lambda\vec{F}_{\gamma})(x-y)\,\theta_{R}(y)f_\gamma(y)\,d\sigma(y)
\nonumber \\[4pt]
&\qquad=\int_{\partial\Omega}\sum_{|\gamma|\leq m-1}\nu(y)\cdot
(\partial^\lambda\vec{F}_{\gamma})(x-y)\,f_\gamma(y)\,d\sigma(y),
\end{align}
and  
\begin{equation}\label{SAdfsfghbdfbh}
\lim_{R\to\infty}\int_{\partial\Omega}\sum_{|\gamma|\leq m-1}\,\sum_{\substack{\eta+\delta=\gamma\\|\eta|>0}}
\frac{\gamma!}{\eta!\,\delta!}\,\nu(y)\cdot(\partial^\lambda\vec F_\gamma)(x-y)\,(\partial^\eta\theta_R)(y)f_\delta(y)\,d\sigma(y)=0.
\end{equation}
To justify \eqref{Msjhfdfsh}, recall that $0\leq\theta_{R}\leq 1$ and bring in \eqref{SAakdfgsdgh} (with $\alpha:=\lambda$) 
to conclude that the integrand in the first line of \eqref{Msjhfdfsh} is bounded, uniformly with respect to $R$, by
\begin{equation}\label{Nsdfhsdhfgh}
\sum_{|\gamma|\leq m-1}\big|(\partial^\lambda\vec{F}_{\gamma})(x-\cdot)\big|\,|f_\gamma|
\lesssim\sum_{|\gamma|\leq m-1}\frac{|f_\gamma|}{1+|\cdot|^{n+m-2-|\gamma|}}\in L^1(\partial\Omega,\sigma),
\end{equation}
with the membership in \eqref{Nsdfhsdhfgh} guaranteed by \eqref{Wdsdghhjjj}. In view of the fact that $\lim\limits_{R\to\infty}\theta_R=1$ 
pointwise in $\mathbb{R}^{n}$, Lebesgue Dominated Convergence Theorem applies and gives \eqref{Msjhfdfsh}.

Next, \eqref{SAdfsfghbdfbh} also follows by invoking the Lebesgue Dominated Convergence Theorem. Indeed, the pointwise 
convergence to zero of the integrand in \eqref{SAdfsfghbdfbh} is a consequence of the support condition on $\partial^\eta\theta_R$. 
As for the uniform bound, for each $\gamma\in{\mathbb{N}}_0^n$ with $|\gamma|=m-1$, once again bring in \eqref{SAakdfgsdgh} 
(with $\alpha:=\lambda$) and \eqref{Dfedfnjsdnf} to estimate
\begin{align}\label{Sfwegerf}
\sum_{\substack{\eta+\delta=\gamma\\|\eta|>0}}\left|
(\partial^\lambda\vec{F}_{\gamma})(x-y)
(\partial^\eta\theta_R)(y)f_\delta(y)\right|
&\lesssim\sum_{\substack{\eta+\delta=\gamma\\|\eta|>0}}\frac{1}{1+|y|^{n+m-2-|\gamma|}}R^{-|\eta|}
{\mathbf{1}}_{|y|\sim R}|f_\delta(y)|
\nonumber\\[4pt]
&\lesssim{\mathbf{1}}_{|y|\sim R}\sum_{|\delta|\leq m-1}\frac{|f_\delta(y)|}{1+|y|^{n+m-2-|\delta|}}
\end{align}
for $\sigma$-a.e. $y\in\partial\Omega$. Recalling that $\dot{f}$ satisfies \eqref{Wdsdghhjjj}, we obtain that the sum on 
the last line of \eqref{Sfwegerf} belongs to $L^1(\partial\Omega,\sigma)$. This finishes the justification of \eqref{Sccdswedc}.

Moving on, since each $f_\gamma^R$ has compact support, we may write \eqref{Sccdswedc} as
\begin{equation}\label{Sfsghdfg}
\partial^\lambda\big(\dot{\mathfrak{D}}_{\Delta^{m}}\dot f\big)(x)
=-\lim_{R\to\infty}\bigg(\partial_x^\lambda\bigg[\int_{\partial\Omega}
\sum_{|\gamma|\leq m-1}\nu(y)\cdot\vec{F}_{\gamma}(x-y)\,f_\gamma^R(y)\,d\sigma(y)\bigg]\bigg).
\end{equation}
Bearing in mind \eqref{Qlsdfpgjsdoigj} and \eqref{JSKLasdglskgkg}, as well as the fact that 
$\theta_{R}\dot{f}\in{\rm WA}_{m-1}[L_{\rm comp}^{1}(\partial\Omega,\sigma)]$, we conclude from \eqref{Sfsghdfg} that
\begin{equation}\label{Edfswfewdf}
\partial^\lambda\big(\dot{\mathfrak{D}}_{\Delta^{m}}\dot f\big)(x)
=\lim_{R\to\infty}\partial^\lambda\big({\dot{\mathcal{D}}}_{\Delta^m}\dot{f}^{R}\big)(x).
\end{equation}
Therefore, it follows from Theorem~\ref{Thjdsbgkjbsdgkbh} that
\begin{equation}\label{Dwdsffdgg}
\partial^\lambda\big(\dot{\mathfrak{D}}_{\Delta^{m}}\dot f\big)(x)=\lim_{R\to\infty}\sum_{|\gamma|=m-1}\int_{\partial\Omega}
\nu(y)\cdot (y-x)\, k_{\lambda\gamma}(x-y)f_{\gamma}^{R}(y)\,d\sigma(y).
\end{equation}
Finally, we claim that
\begin{align}\label{Nshsdwsd}
&\lim_{R\to\infty}\sum_{|\gamma|=m-1}\int_{\partial\Omega}\nu(y)\cdot (y-x)\,k_{\lambda\gamma}(x-y)f_{\gamma}^{R}(y)\,d\sigma(y)
\nonumber\\
&\qquad=\sum_{|\gamma|=m-1}\int_{\partial\Omega}\nu(y)\cdot (y-x)\, k_{\lambda\gamma}(x-y)f_{\gamma}(y)\,d\sigma(y).
\end{align}
To verify \eqref{Nshsdwsd}, bring in \eqref{Mshfggfgtdsdfgx} and note that it suffices to show that
\begin{align}\label{sdffwedsf}
&\lim_{R\to\infty}\int_{\partial\Omega}\sum_{|\gamma|=m-1}\nu(y)\cdot
(y-x)\,k_{\lambda\gamma}(x-y)\,\theta_R(y)f_\gamma(y)\,d\sigma(y)
\nonumber\\[6pt]
&\qquad =\int_{\partial\Omega}\sum_{|\gamma|=m-1}\nu(y)\cdot(y-x)\, k_{\lambda\gamma}(x-y)f_{\gamma}(y)\,d\sigma(y)
\end{align}
as well as
\begin{align}\label{Cewcdfdsdf}
&\lim_{R\to\infty}\int_{\partial\Omega}\sum_{|\gamma|=m-1}\,\sum_{\substack{\eta+\delta=\gamma\\|\eta|>0}}
\frac{\gamma!}{\eta!\,\delta!}\,\nu(y)\cdot(y-x)\,k_{\lambda\gamma}(x-y)\,(\partial^\eta\theta_R)(y)\,f_\delta(y)\,d\sigma(y)=0.
\end{align}
Both are consequences of the Lebesgue Dominated Convergence Theorem. Indeed, as before the pointwise convergence follows from 
the properties of $\theta_R$. As for the domination uniform with respect to $R$, for the integrand in the first line of 
\eqref{sdffwedsf} we have
\begin{equation}\label{Sfsdfgsdfg}
\sum_{|\gamma|=m-1}\big|x-\cdot\big|\big|k_{\lambda\gamma}(x-\cdot)\big|\big|\,|\theta_R||f_\gamma|
\lesssim_{x}\sum_{|\gamma|=m-1}\frac{|f_\gamma|}{1+|\cdot|^{n-1}}\in L^1(\partial\Omega,\sigma),
\end{equation}
with the membership ensured by having $f_{\gamma}\in L^{1}\Big(\partial\Omega,\frac{\sigma(y)}{1+|y|^{n-1}}\Big)$ 
for every multi-index $\gamma\in\mathbb{N}_{0}^{n}$ satisfying $|\gamma|=m-1$. This proves \eqref{sdffwedsf}.

Regarding \eqref{Cewcdfdsdf}, note that for each $\gamma\in{\mathbb{N}}_0^n$ with $|\gamma|=m-1$ we have
(relying also on \eqref{Dfedfnjsdnf})
\begin{align}\label{Ddffhdhgbdwsgdf}
\sum_{\substack{\eta+\delta=\gamma\\|\eta|>0}}
\big|y-x\big| \big|k_{\lambda\gamma}(x-y)\big|\big|(\partial^\eta\theta_R)(y)\big| \big|f_\delta(y)\big|
&\lesssim_{x}\sum_{\substack{\eta+\delta=\gamma\\|\eta|>0}}\frac{1}{1+|y|^{n-1}}R^{-|\eta|}
{\mathbf{1}}_{|y|\sim R}|f_\delta(y)|
\nonumber\\[4pt]
&\lesssim{\mathbf{1}}_{|y|\sim R}\sum_{|\delta|\leq m-1}\frac{|f_\delta(y)|}{1+|y|^{n+m-2-|\delta|}}
\end{align}
for $\sigma$-a.e. $y\in\partial\Omega$. Once again, having $\dot{f}$ as in \eqref{Wdsdghhjjj} guarantees that the last 
line of \eqref{Ddffhdhgbdwsgdf} belongs to $L^1(\partial\Omega,\sigma)$. Therefore, the Lebesgue Dominated Convergence Theorem applies 
and gives \eqref{Cewcdfdsdf}. This finishes the justification of \eqref{Nshsdwsd}. In turn, \eqref{Nshsdwsd} and 
\eqref{Dwdsffdgg} yield \eqref{Sadgdsfdfga}.
\end{proof}

Recall that given $\Omega\subseteq{\mathbb{R}}^n$ we denote $\Omega_{+}=\Omega$ and 
$\Omega_{-}={\mathbb{R}}^n\setminus\overline{\Omega}$.

\begin{proposition}\label{prop:claim}
Let $n,m\in\mathbb{N}$ with $n\geq 2$ and suppose $\Omega\subseteq\mathbb{R}^{n}$ is an Ahlfors regular domain with unbounded boundary. 
Abbreviate $\sigma:=\mathcal{H}^{n-1}\lfloor{\partial\Omega}$, and denote by $\nu$ its geometric measure theoretic outward unit normal. 
Also, fix a reference point $x_{\ast}\in\mathbb{R}^{n}\setminus\partial\Omega$. Let $\dot{\mathfrak{D}}_{\Delta^{m}}$ be the boundary-to-domain 
modified multi-layer potential operator associated with the set $\Omega$ and the point $x_\ast$ as in Definition~\ref{modified-mahdhasd}.
Then for every $P\in{\mathcal{P}}_{m-2}$ and every $x\in{\mathbb{R}}^n\setminus\partial\Omega$ one has
\begin{equation}\label{eq:claim}
(\dot{\mathfrak{D}}_{\Delta^{m}}\dot{P})(x)
=\begin{cases}
\pm P(x), & \text{if }\,x\in\Omega_\pm\,\text{ and }\,x_*\in\Omega_\mp,
\\[4pt]
0, & \text{if }\,x,x_*\in\Omega_{+},\,\text{ or }\,x,x_*\in\Omega_{-}.
\end{cases}
\end{equation}
\end{proposition}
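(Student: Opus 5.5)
Since $P\in\mathcal{P}_{m-2}$, the top-order components $\partial^\gamma P$ with $|\gamma|=m-1$ vanish. Hence $\dot P$ has only components of order $\leq m-2$, and each is a polynomial, so it lies in the weighted $L^1$ space from \eqref{QWdfwsfdfg}. Indeed, $|\partial^\gamma P(y)|\lesssim 1+|y|^{m-2-|\gamma|}$, and Ahlfors regularity gives $\int_{\partial\Omega}(1+|y|)^{-(n-1)-\varepsilon}\,d\sigma<\infty$ for $\varepsilon>0$; here the weight exponent is $n+m-2-|\gamma|$, which leaves $\varepsilon=1$. Moreover $\dot P\in({\rm CC})_{m-1}$. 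Theorem~\ref{Jksjadfhsgh} then shows that $\partial^\lambda(\dot{\mathfrak{D}}_{\Delta^m}\dot P)=0$ in $\mathbb{R}^n\setminus\partial\Omega$ for every $|\lambda|=m-1$, because only the vanishing top components enter \eqref{Sadgdsfdfga}. Consequently, $\dot{\mathfrak{D}}_{\Delta^m}\dot P$ coincides on each connected component of $\mathbb{R}^n\setminus\partial\Omega$ with a polynomial of degree $\leq m-2$. I would check connectedness of $\Omega_\pm$ separately: $\Omega$ is a domain, while $\Omega_-$ may not be connected. To handle this, the plan below computes the function pointwise rather than only up to polynomials.

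The pointwise computation truncates as in the proof of Theorem~\ref{Jksjadfhsgh}. Let $\dot P^R$ be given by \eqref{Mshfggfgtdsdfgx}, i.e.\ the Whitney array of $\theta_R P$, which is $\dot{(\theta_R P)}$ by the Leibniz rule. Fix $x\in\mathbb{R}^n\setminus\partial\Omega$ and write
\[
(\dot{\mathfrak D}_{\Delta^m}\dot P)(x)=\lim_{R\to\infty}\Big[(\dot{\mathcal D}_{\Delta^m}\dot P^R)(x)-\mathbb{P}_{m-2,x_*}\big[\dot{\mathcal D}_{\Delta^m}\dot P^R\big](x)\Big].
\]
Here the Taylor polynomial is taken in the variable $x$ at $x_*$, which is legitimate because $x_*\notin\partial\Omega$ and $\dot P^R$ has compact support. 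The limit is justified by dominated convergence using \eqref{Wfwgdh} together with \eqref{Dfedfnjsdnf}, in the same way as \eqref{Msjhfdfsh}--\eqref{SAdfsfghbdfbh}. The required domination is $R^{-|\eta|}\mathbf{1}_{|y|\sim R}|\partial^\delta P|/(1+|y|)^{n-1-|\gamma|+m-1}$, which is controlled by the integrable weight.

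Next I would evaluate $\dot{\mathcal D}_{\Delta^m}\dot P^R$ near $x$ and $x_*$. By the Taylor identity in the proof of Proposition~\ref{MAahsifuhfiouh}, for $z\notin\partial\Omega$,
\[
(\dot{\mathcal D}_{\Delta^m}\dot P^R)(z)=\frac1{\omega_{n-1}}\int_{\partial\Omega}\frac{\nu(y)\cdot(y-z)}{|z-y|^n}\,T^{m-1}_y[\theta_RP](z)\,d\sigma(y),
\]
where $T^{m-1}_y$ denotes the Taylor polynomial of order $m-1$ at $y$. For $|z|<R$, Taylor's remainder gives $T^{m-1}_y[\theta_RP](z)=P(z)$ when $|y|<R$, and this expression vanishes when $|y|>2R$. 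The error therefore comes only from the annulus $|y|\sim R$, where it is $O(R^{m-1}\cdot R^{-(m-1)})$ with size factor at most of order $R^{m-2}R^{1-n}$. After differentiating in $z$ up to order $m-2$ and integrating over $\sigma(|y|\sim R)\sim R^{n-1}$, this error is $O(R^{-1})$; I expect this bookkeeping to be the main obstacle. Hence $\partial^\alpha_z(\dot{\mathcal D}_{\Delta^m}\dot P^R)(z)=\partial^\alpha[P\,\mathcal{D}_\Delta(\theta_R\text{-truncated }1)](z)+o(1)$. To get Gauss' theorem in the unbounded setting, I would instead apply Proposition~\ref{MAahsifuhfiouh} to the bounded Ahlfors regular domain $\Omega\cap B(0,\rho)$ (or its complement), with $\rho\sim R$ chosen generically. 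Alternatively, one can directly use that $\mathcal{D}_\Delta$ applied to a cutoff of $1$ equals $c_\pm+O(R^{-1})$ on $\Omega_\pm$, with $c_+=0$ and $c_-=-1$ as in Gauss' theorem for unbounded $\Omega$. Either way, $\partial^\alpha(\dot{\mathcal D}_{\Delta^m}\dot P^R)(z)\to c_\pm\,\partial^\alpha P(z)$ for $z\in\Omega_\pm$.

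To conclude, I plug this into the limit. Since $P\in\mathcal{P}_{m-2}$, we have $\mathbb{P}_{m-2,x_*}[c\,P]=c\,P$, so the limit equals $(c(x)-c(x_*))P(x)$. If $x,x_*$ lie in the same set $\Omega_\pm$ this is $0$. If $x\in\Omega_+$ and $x_*\in\Omega_-$ it is $0-(-1)=1$, giving $P(x)$; in the opposite case it gives $-P(x)$. This is exactly \eqref{eq:claim}.
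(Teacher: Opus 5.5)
Your preliminary steps are fine: $\dot P$ satisfies \eqref{QWdfwsfdfg}, $\dot P^R$ is the Whitney array of $\theta_RP$, and dominated convergence gives $(\dot{\mathfrak D}_{\Delta^m}\dot P)(x)=\lim_{R\to\infty}\big[(\dot{\mathcal D}_{\Delta^m}\dot P^R)(x)-\mathbb{P}_{m-2,x_*}[\dot{\mathcal D}_{\Delta^m}\dot P^R](x)\big]$. The gap is the next step. There you let $R\to\infty$ separately at $x$ and in the Taylor coefficients at $x_*$, and claim $\partial^\alpha(\dot{\mathcal D}_{\Delta^m}\dot P^R)(z)\to c_\pm\,\partial^\alpha P(z)$ with $c_+=0$, $c_-=-1$. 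This is false when $\partial\Omega$ is unbounded.

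\emph{The constants $c_\pm$ are wrong.} Proposition~\ref{MAahsifuhfiouh} requires a compact boundary. Already for a half-space, $\mathcal D_\Delta(\theta_R|_{\partial\Omega})$ tends to $\frac12$ on $\Omega_+$ and to $-\frac12$ on $\Omega_-$. In general (apply the divergence theorem to $\theta_R\nabla E_\Delta(\cdot-z)$ with radial $\theta$), it equals $\mathbf 1_\Omega(z)$ minus a weighted average over $r\in[R,2R]$ of $\mathcal H^{n-1}(\Omega\cap\partial B(0,r))/(\omega_{n-1}r^{n-1})$, up to $O(R^{-1})$. This need not converge at all. Passing to $\Omega\cap B(0,\rho)$ does not help, because the spherical piece $\Omega\cap\partial B(0,\rho)$ contributes exactly this term.

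\emph{The annulus error is not $O(R^{-1})$.} One has $T^{m-1}_y[\theta_RP](z)-\theta_R(y)P(z)=\sum_{0<|\eta|\le m-1}\frac{(z-y)^\eta}{\eta!}(\partial^\eta\theta_R)(y)\,T^{m-1-|\eta|}_y[P](z)$. For $|y|\sim R$, the truncated Taylor polynomials $T^{k}_y[P](z)$ with $k<\deg P$ have size $R^{\deg P}$. On a rough boundary $|\nu(y)\cdot(y-z)|$ may be comparable to $|y|$, so after $|\alpha|$ derivatives the annulus term is of order $R^{\deg P-|\alpha|}$. The $O(R^{-1})$ bound holds only for $|\alpha|=m-1$, which is Theorem~\ref{Jksjadfhsgh}. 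For instance, take $n=2$, $m=3$, $P(z)=z_1$, $0\in\Omega$ and $\theta=\vartheta(|\cdot|)$. Then $(\dot{\mathcal D}_{\Delta^m}\dot P^R)(0)=\frac{R}{4\pi}\int_{\partial(\Omega/R)}(\nu(w)\cdot w)\,w_1\,\vartheta''(|w|)\,d\sigma(w)$, which is unbounded for, e.g., a self-similar non-conical $\Omega$. So the individual quantities generally diverge, and only the combination (value at $x$ minus Taylor polynomial at $x_*$) has a limit.

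What is missing is a way to use the cancellation between $x$ and $x_*$ before any limit is taken. The paper does this at the level of the kernel. The Taylor identity together with \eqref{eq:2bis} turns $\sum_{|\gamma|\le m-1}\vec H_\gamma(x,y)\,\partial^\gamma P(y)$ into $[\nabla E_\Delta(x-y)-\nabla E_\Delta(x_*-y)]P(x)$ minus finitely many terms $(\nabla\partial^aE_\Delta)(x_*-y)$, $|a|>0$, times polynomials in $x$. These decay like $|y|^{-n}$ and $|y|^{1-n-|a|}$. Each boundary integral is then evaluated by the divergence theorem for fields with integrable nontangential maximal function. The divergences are $(\delta_x-\delta_{x_*})|_\Omega$, which pairs with $1$ to give $\pm1$ or $0$, and $(-1)^{|a|+1}\partial^a\delta_{x_*}|_\Omega$, which gives $0$.

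Your truncation scheme can also be repaired. Apply the divergence theorem in $\Omega$ to the compactly supported field $y\mapsto\nabla E_\Delta(y-z)\,T^{m-1}_y[\theta_RP](z)$. For $|z|<R$ this gives $(\dot{\mathcal D}_{\Delta^m}\dot P^R)(z)=\mathbf 1_\Omega(z)P(z)+V_R(z)$, where $V_R(z)=\int_{\Omega}\sum_{|\gamma|=m-1}\frac{(z-y)^\gamma}{\gamma!}\nabla E_\Delta(y-z)\cdot\nabla\partial^\gamma(\theta_RP)(y)\,dy$ lives on $\Omega\cap\{R\le|y|\le 2R\}$. The $(m-1)$-st $z$-derivatives of this kernel are $O(R^{1-n})$ and $|\nabla^m(\theta_RP)|\lesssim R^{-2}$. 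Taylor's remainder then gives $V_R(x)-\mathbb P_{m-2,x_*}[V_R](x)=O(R^{-1})$. Hence $(\dot{\mathfrak D}_{\Delta^m}\dot P)(x)=(\mathbf 1_\Omega(x)-\mathbf 1_\Omega(x_*))P(x)$, which is \eqref{eq:claim}.
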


\begin{proof}
Fix $x\in{\mathbb{R}}^n\setminus\partial\Omega$ and let $P\in{\mathcal{P}}_{m-2}$. Recall 
$\vec{F}_\gamma$ from \eqref{JSKLasdglskgkg} and the functor from \eqref{mxcvbprm3fdh.PP-INTRO}. Since $\deg P\leq m-2$,
formula \eqref{eq:P-reproduces} gives
\begin{equation}\label{eqBNHJg}
\sum_{|\gamma|\leq m-1}\vec{F}_\gamma(x-y)\,(\partial^\gamma P)(y)
=(\nabla E_\Delta)(x-y)\,{\mathbb{P}}_{m-1,y}[P](x)=(\nabla E_\Delta)(x-y)\,P(x)
\end{equation}
for every $y\in{\mathbb{R}}^n\setminus\{x\}$. This and the linearity of ${\mathbb{P}}_{m-2,x_*}[\,\cdot\,]$ then yield
\begin{align}\label{eq:aux-lin}
\sum_{|\gamma|\leq m-1}{\mathbb{P}}_{m-2,x_*}\big[\vec{F}_\gamma(\cdot-y)\big](x)\,(\partial^\gamma P)(y) 
&={\mathbb{P}}_{m-2,x_*}\Big[\sum_{|\gamma|\leq m-1}
\vec{F}_\gamma(\cdot-y)(\partial^\gamma P)(y)\Big](x) 
\nonumber\\[4pt]
&={\mathbb{P}}_{m-2,x_*}\big[(\nabla E_\Delta)(\cdot-y)\,P(\cdot)\big](x)
\end{align}
for every $y\in{\mathbb{R}}^n\setminus\{x\}$. Invoking \eqref{eq:2bis} with $w(\cdot):=(\nabla E_\Delta)(\cdot-y)$ 
(with $y$ fixed) for the right-hand side of \eqref{eq:aux-lin}, we obtain
\begin{align}\label{eq:aux-2bis}
\sum_{|\gamma|\leq m-1}{\mathbb{P}}_{m-2,x_*}&\big[\vec{F}_\gamma(\cdot-y)\big](x)\,(\partial^\gamma P)(y)
=(\nabla E_\Delta)(x_*-y)\,P(x) 
\nonumber\\[4pt]
&+\sum_{\substack{|a|+|b|\leq m-2\\|a|>0}}\frac{(x-x_*)^a}{a!}(\nabla\partial^a E_\Delta)(x_*-y)
\frac{(x-x_*)^b}{b!}(\partial^b P)(x_*).
\end{align}
By combining \eqref{Qwfghjfhjf} and \eqref{modfeiugfh} (with $\dot f=\dot{P}$), \eqref{eqBNHJg}, and \eqref{eq:aux-2bis}, we obtain
\begin{align}\label{eq:before-split}
(\dot{\mathfrak{D}}_{\Delta^{m}}\dot{P})(x)=-\int_{\partial\Omega}\nu(y)\cdot\Big\{
&(\nabla E_\Delta)(x-y)P(x)-(\nabla E_\Delta)(x_*-y)P(x) \nonumber\\[4pt]
&-\sum_{\substack{|a|+|b|\leq m-2\\ |a|>0}}\frac{(x-x_*)^a}{a!}(\nabla\partial^a E_\Delta)(x_*-y)\times
\nonumber\\[0pt]
&\hspace{2.5cm}\times\frac{(x-x_*)^b}{b!}(\partial^b P)(x_*)\Big\}\,d\sigma(y).
\end{align}
Write the right-hand side of \eqref{eq:before-split} as ${\rm I}(x)+{\rm II}(x)$, 
where ${\rm I}(x)$, ${\rm II}(x)$ are given, respectively, by
\begin{align}\label{eq:I-def}
{\rm I}(x)&:=\bigg(\int_{\partial\Omega}\nu(y)\cdot\vec{G}_{x,x_*}(y)\,d\sigma(y)\bigg)P(x), 
\\[4pt]
&\hspace{1cm}\text{with }\,\vec{G}_{x,x_*}:=\nabla\big[E_\Delta(\cdot-x)- E_\Delta(\cdot-x_*)\big],
\label{Gxxstar}
\end{align}
and
\begin{align}\label{eq:II-def}
{\rm II}(x)&:=\sum_{\substack{|a|+|b|\leq m-2\\ |a|>0}}\bigg(\int_{\partial\Omega}\nu(y)\cdot
\vec{J}_{x_*,a}(y)\,d\sigma(y)\bigg)\frac{(x-x_*)^a}{a!}\,\frac{(x-x_*)^b}{b!}(\partial^b P)(x_*),
\\[4pt]
&\hspace{3cm}\text{with }\,\vec{J}_{x_*,a}:=(-1)^{|a|+1}\nabla\partial^a\big[E_\Delta(\cdot-x_*)\big].
\label{Hxxstar}
\end{align}
Moreover, we note that the integrals in \eqref{eq:I-def} and \eqref{eq:II-def} are absolutely convergent, since
\begin{equation}\label{eq:estG}
|\vec{G}_{x,x_*}(y)|\lesssim_{x,x_*}\frac{1}{1+|y|^n}\,\text{ uniformly for $y\in{\mathbb{R}}^n$ away from $x,x_*$},
\end{equation}
\begin{equation}\label{eq:estH}
|\vec{J}_{x_*,a}(y)|\lesssim_{x_*}\frac{1}{1+|y|^{n-1+|a|}}\,\text{ uniformly for $y\in{\mathbb{R}}^n$ away from $x_*$}.
\end{equation}
The strategy now is to invoke the Divergence Theorem from  \cite[Theorem~1.4.1, pp.\,38--39]{GHA.I}, applied in $\Omega$. 
To this end, fix an arbitrary aperture parameter $\kappa\in(0,\infty)$.

We first focus on ${\rm I}(x)$. An inspection of \eqref{Gxxstar} combined with the properties of $E_\Delta$ reveals that 
$\vec{G}_{x,x_*}\in\big[\mathscr{C}^\infty(\overline\Omega\setminus\{x,x_*\})\cap 
L^1_{{\rm loc}}(\Omega,{\mathcal{L}}^n)\big]^n$ and, in the sense of distributions in $\Omega$,
\begin{align}\label{eq:divG}
{\rm div}\,\vec{G}_{x,x_*} &=\Delta\big[E_\Delta(\cdot-x)-E_\Delta(\cdot-x_*)\big]
=(\delta_x-\delta_{x_*})\big|_\Omega\in{\mathcal{E}}'(\Omega).
\end{align}
In particular, ${\rm div}\,\vec{G}_{x,x_*}\in{\mathcal{E}}'(\Omega)$ and
\begin{equation}\label{eq:divG-pairing}
{}_{{\mathcal{E}}'(\Omega)}\big\langle{\rm div}\,\vec{G}_{x,x_*},1\big\rangle_{{\mathcal{E}}(\Omega)}
=\begin{cases} 
0, & \text{ if }\,x,x_*\in\Omega_\pm, 
\\ 
\pm 1, & \text{ if }\,x\in\Omega_\pm \text{ and } x_*\in\Omega_\mp.  
\end{cases}
\end{equation}
By \eqref{eq:divG} and \cite[Lemma~8.3.7, (8.3.47), pp.\,693--694]{GHA.I}, for every compact set $K\subseteq\Omega$ such that 
$\Omega\setminus\mathring{K}$ contains neither $x$ nor $x_*$,
\begin{equation}\label{eq:G-nontang-est}
0\leq \big({\mathcal{N}}_\kappa^{\,\Omega\setminus K}\vec{G}_{x,x_*}\big)(y) \lesssim_{x,x_*,\kappa,K}
\frac{1}{1+|y|^n}\,\,\text{ for all }\,y\in\partial\Omega.
\end{equation}
Since ${\mathcal{N}}_\kappa^{\,\Omega\setminus K}\vec{G}_{x,x_*}$ is also $\sigma$-measurable (cf. \cite[(8.2.28), p.\,685]{GHA.I}), 
this places ${\mathcal{N}}_\kappa^{\,\Omega\setminus K}\vec{G}_{x,x_*}$ in $L^1(\partial\Omega,\sigma)$
(cf. \cite[(7.2.5), p.\,574]{GHA.I}). Granted this, the Divergence Theorem stated in \cite[Theorem~1.4.1, pp.\,38--39]{GHA.I}
together with \cite[(4.6.21), p.\,332]{GHA.I} apply and, in view of \eqref{eq:divG-pairing},
give
\begin{equation}\label{eq:I-jump}
{\rm I}(x)=\begin{cases} 
0, & \text{ if }\,x,x_*\in\Omega_{+},\,\text{ or }\,x,x_*\in\Omega_{-}
\\ 
\pm P(x), & \text{ if }\,x\in\Omega_\pm\,\text{ and }\,x_*\in\Omega_\mp. 
\end{cases}
\end{equation}

Moving on to ${\rm II}(x)$, since $E_\Delta$ is locally integrable and smooth outside the origin we have
$\vec{J}_{x_*,a}\in\big[{\mathcal{D}}'(\Omega)\big]^n$ and 
$\vec{J}_{x_*,a}\big|_{\Omega\setminus\{x_*\}}\in\big[\mathscr{C}^\infty(\overline\Omega\setminus\{x_*\})\big]^n$.
Also, for every compact set $K\subseteq\Omega$ such that $\Omega\setminus\mathring{K}$ does not contain $x_*$, 
it follows that ${\mathcal{N}}_\kappa^{\,\Omega\setminus K}\vec{J}_{x_*,a}$ is 
$\sigma$-measurable (cf. \cite[(8.2.28), p.\,685]{GHA.I}) and satisfies
\begin{equation}\label{eq:H-nontang-est}
0\leq\big({\mathcal{N}}_\kappa^{\,\Omega\setminus K}\vec{J}_{x_*,a}\big)(y)\lesssim_{x_*,\kappa,K}
\frac{1}{1+|y|^{n-1+|a|}}\,\text{ for all }\,y\in\partial\Omega.
\end{equation}
Hence, since $|a|>0$, we have ${\mathcal{N}}_\kappa^{\,\Omega\setminus K}\vec{J}_{x_*,a}\in L^1(\partial\Omega,\sigma)$. 
Moreover, from the definition of $\vec{J}_{x_*,a}$ in \eqref{Hxxstar}, it follows that, in the sense of distributions in $\Omega$,
\begin{align}\label{eq:div-H}
{\rm div}\,\vec{J}_{x_*,a} &=(-1)^{|a|+1}\Delta\partial^a\big[E_\Delta(\cdot-x_*)\big]
=(-1)^{|a|+1}\big(\partial^a\delta_{x_*}\big)\big|_\Omega \in {\mathcal{E}}'(\Omega).
\end{align}
Invoking again \cite[Theorem~1.4.1, pp.\,38--39]{GHA.I} and \cite[(4.6.21), p.\,332]{GHA.I}
(recalling that $\partial\Omega$ is unbounded), we conclude
\begin{align}\label{eq:II-zero}
{\rm II}(x) &=\sum_{\substack{|a|+|b|\leq m-2\\ |a|>0}}{}_{{\mathcal{E}}'(\Omega)} \big\langle{\rm div}\,\vec{J}_{x_*,a}\,,\,1
\big\rangle_{{\mathcal{E}}(\Omega)}\,\frac{(x-x_*)^{a+b}}{a!\,b!}(\partial^b P)(x_*)
\nonumber\\[4pt]
&=\sum_{\substack{|a|+|b|\leq m-2\\ |a|>0}}(-1)^{|a|+1}{}_{{\mathcal{E}}'(\Omega)}\big\langle\partial^a\delta_{x_*},1
\big\rangle_{{\mathcal{E}}(\Omega)}\,\frac{(x-x_*)^{a+b}}{a!\,b!}(\partial^b P)(x_*)=0,
\end{align}
since ${}_{{\mathcal{E}}'(\Omega)}\langle\partial^a\delta_{x_*},1\rangle_{{\mathcal{E}}(\Omega)}
=(-1)^{|a|}{}_{{\mathcal{E}}'(\Omega)}\langle\delta_{x_*},\partial^a1\rangle_{{\mathcal{E}}(\Omega)}=0$ whenever $|a|>0$.

Now \eqref{eq:claim} follows by combining \eqref{eq:before-split}, \eqref{eq:I-jump}, and \eqref{eq:II-zero}.
\end{proof}


\section{Proof of the Main result}

\begin{proof}[Proof of Theorem~\ref{DInRBVP-tk-INTRO}] 
Let $\delta_{\ast}\in(0,1)$ be the threshold given by 
\cite[Theorem~3.1.5, pp.\,118--119]{GHA.V}, depending only on $n$ and on the Ahlfors regularity character of $\partial\Omega$.
If $\|\nu\|_{{\rm BMO}(\partial\Omega,\sigma)}<\delta$ for some $\delta\in(0,\delta_{\ast}]$, the aforementioned result ensures 
that $\Omega$ is also a two-sided {\rm NTA} domain with an unbounded connected boundary. Henceforth, assume this to be the case.

Next, let ${\mathbb{X}}$ be a normed space belonging to the hierarchy of good spaces on $\partial\Omega$. As proved in
\cite[Proposition~2.48]{MMM.HOPT} the following continuous embedding holds
\begin{equation}\label{MKJydxd}
{\mathbb{X}}\hookrightarrow L^1\bigg(\partial\Omega\,,\,\frac{\sigma(x)}{1+|x|^{n-1}}\bigg).
\end{equation}
Fix an arbitrary $\dot{h}\in{\rm HWA}_{m-1}[\mathbb{X}]$ and set $w:=\dot{\mathfrak{D}}_{\Delta^{m}}\dot{h}$ in $\Omega$. 
Thanks to \eqref{HWA.X}, \eqref{MKJydxd}, \eqref{modified-multiprojsdg}, and Theorem~\ref{Jksjadfhsgh}, it follows that  
$w\in{\mathscr{C}}^\infty(\Omega)$ and at each $x\in\Omega$ we have
\begin{equation}\label{ddgdsdsdg}
(\nabla^{m-1}w)(x)=\Bigg(\sum_{|\beta|=m-1}
\int_{\partial\Omega}\nu(y)\cdot (y-x)\, k_{\lambda\beta}(x-y)h_{\beta}(y)\,d\sigma(y)\Bigg)_{|\lambda|=m-1}
\end{equation}
where the functions $\{k_{\lambda\beta}\}_{|\lambda|=|\beta|=m-1}$ are as in \eqref{Kehsdbg-THM}. As a consequence of Theorem~\ref{Thjdsbgkjbsdgkbh},
the family $\big\{k_{\lambda\beta}\big\}_{|\beta|=|\lambda|=m-1}$ has the properties listed in \eqref{HShagdfsdg-THM}.
As such, the components of the right-hand side of \eqref{ddgdsdsdg} are sums of boundary-to-domain integral operators to 
which \cite[Theorem~2.75]{MMM.HOPT} applies and gives that
\begin{equation}\label{Sfdfe}
(\partial^{\lambda}w)\big|_{\partial\Omega}^{{}^{\kappa\text{\rm -nt}}}\,
\text{ exists $\sigma$-a.e. on $\partial\Omega$ for each $|\lambda|=m-1$,}
\end{equation}
and
\begin{equation}\label{fwergerg}
\big\|\mathcal{N}_{\kappa}(\nabla^{m-1}w)\big\|_{\mathbb{X}}
\leq C\sum_{|\beta|=m-1}\|h_\beta\|_{{\mathbb{X}}}
= C\,\big\|\dot{h}\big\|_{{\rm HWA}_{m-1}[\mathbb{X}]},
\end{equation}
for some constant $C\in (0,\infty)$ independent of $\dot{h}\in{\rm HWA}_{m-1}[\mathbb{X}]$. Granted these properties, 
Theorem~\ref{thm:HWA-X.aaa} ensures that ${\rm Tr}^{{}^{\kappa\text{\rm-nt}}}_{m-1}w$ exists $\sigma$-a.e. on $\partial\Omega$, 
belongs to ${\rm HWA}_{m-1}[\mathbb{X}]$, and satisfies
\begin{equation}\label{BGrfdf}
\Big\|{\rm Tr}^{{}^{\kappa\text{\rm -nt}}}_{m-1}w\Big\|_{{\rm HWA}_{m-1}[{\mathbb{X}}]}
\leq C\,\big\|\dot{h}\big\|_{{\rm HWA}_{m-1}[\mathbb{X}]}.
\end{equation}

The above considerations permit us to define the linear and bounded operator
\begin{equation}\label{Sfeggfdgerg}
\begin{array}{c}
T:{\rm HWA}_{m-1}[\mathbb{X}]\longrightarrow{\rm HWA}_{m-1}[\mathbb{X}]
\\[4pt]
T\dot{h}:={\rm Tr}^{{}^{\kappa\text{\rm-nt}}}_{m-1}\big(\dot{\mathfrak{D}}_{\Delta^{m}}\dot{h}\big)-\tfrac{1}{2}\dot{h}
\,\text{ for each }\,\dot{h}\in{\rm HWA}_{m-1}[\mathbb{X}].
\end{array}
\end{equation}
Corresponding to $m=1$, the operator $T$ becomes the classical boundary-to-boundary harmonic double layer potential 
operator $K_{\Delta}$ (cf. \cite[(1.1.32), p.\,9]{GHA.I}). Also, observe that by design
\begin{equation}\label{TTTbchbcbgg} 
\big(\tfrac{1}{2}I+T\big)\dot{h}={\rm Tr}^{{}^{\kappa\text{\rm-nt}}}_{m-1} \big(\dot{\mathfrak{D}}_{\Delta^{m}}\dot{h}\big)
\,\text{ for every }\,\dot{h}\in{\rm HWA}_{m-1}[\mathbb{X}]. 
\end{equation} 

The next goal is to prove that, by eventually decreasing the threshold $\delta\in(0,\delta_{\ast}]$, the operator
\begin{equation}\label{Ttt-ISOMORPH}
\tfrac{1}{2}I+T:{\rm HWA}_{m-1}[\mathbb{X}]\longrightarrow 
{\rm HWA}_{m-1}[\mathbb{X}]\,\,\text{ is an isomorphism}.
\end{equation}
To show \eqref{Ttt-ISOMORPH}, first observe that by Proposition~\ref{prop:claim} (recall that 
$x_{\ast}\in\mathbb{R}^{n}\setminus\overline{\Omega}$) and \eqref{TTTbchbcbgg} one has
\begin{equation}\label{polynfujdfh}
\big(\tfrac{1}{2}I+T\big)\dot{P}=\dot{P}\,\,\,
\text{ for every $\dot{P}\in\dot{\mathcal{P}}_{m-2}$.}
\end{equation}
The latter and Theorem~\ref{thm:HWA-X} further imply that
$[T]:{\rm HWA}_{m-1}[\mathbb{X}]/\!\sim\,\to\,{\rm HWA}_{m-1}[\mathbb{X}]/\!\sim$ given
by $[T][\dot{h}]:=[T\dot{h}]$ for each $\dot{h}\in{\rm HWA}_{m-1}[\mathbb{X}]$ is a well defined, linear and bounded operator. 
Second, we claim that if $\|\nu\|_{{\rm BMO}(\partial\Omega,\sigma)}<\delta$ with $\delta\in(0,\delta_\ast]$ sufficiently small then the operator
\begin{equation}\label{Adwsfgg}
\tfrac{1}{2}I+[T]:{\rm HWA}_{m-1}\big[{\mathbb{X}}\big]\big/\sim\,\longrightarrow
{\rm HWA}_{m-1}\big[{\mathbb{X}}\big]\big/\sim\,\,\,\text{ is a Banach-space isomorphism}.
\end{equation}
To justify this claim, for each $\dot{h}\in{\rm HWA}_{m-1}[\mathbb{X}]$ use \eqref{Sfeggfdgerg} and \eqref{eq:EQUIV-CLASS.NNN} to write
\begin{equation}\label{mainquotnorm}
\Big\|[T][\dot{h}]\Big\|_{{\rm HWA}_{m-1}[\mathbb{X}]/{\sim}}
=\sum_{|\lambda|=m-1}\Big\|\big[\partial^{\lambda}\big(\dot{\mathfrak{D}}_{\Delta^{m}}\dot{h}\big)\big]
\big|_{\partial\Omega}^{{}^{\kappa\text{\rm -nt}}}-\tfrac{1}{2}h_{\lambda}\Big\|_{\mathbb{X}}.
\end{equation}
In concert, Theorem~\ref{Jksjadfhsgh} and the jump-formulas for the boundary traces of each
$\partial^{\lambda}(\dot{\mathfrak{D}}_{\Delta^{m}}\dot{h})$ with $|\lambda|=m-1$ 
(cf. \cite[Theorem~5.2.2, pp.\,623--641]{GHA.IV}, keeping in mind \eqref{MKJydxd} and the properties in 
\eqref{HShagdfsdg-THM}-\eqref{eq:ugt-igFRD}), imply that the right-hand side of \eqref{mainquotnorm} is bounded above by 
\begin{equation}\label{mainPV}
\sum_{|\beta|=|\lambda|=m-1}\bigg\|\lim_{\varepsilon\to 0^{+}}\int_{\substack{y\in\partial\Omega\\|\cdot-y|>\varepsilon}}
\nu(y)\cdot(y-\cdot)\,k_{\lambda\beta}(\cdot-y)h_{\beta}(y)\,d\sigma(y)\bigg\|_{\mathbb{X}}.
\end{equation}
By the properties of each kernel function $k_{\lambda\beta}$, the operators inside the $\mathbb{X}$-norm in \eqref{mainPV} are 
boundary-to-boundary singular integral operators of ``chord-dot-normal'' type (in the sense of \cite[Theorem~5.2.2, pp.\,623-641]{GHA.IV}).
Bearing this in mind, it follows from \cite[Theorem~2.77, item (1)]{MMM.HOPT} and \eqref{eq:EQUIV-CLASS.NNN} that there exists a constant $C\in(0,\infty)$ 
depending only on the dimension $n$, the Ahlfors regularity character of $\partial\Omega$, the space $\mathbb{X}$, and $m$ such that the following 
geometrically sensitive operator norm estimate holds:
\begin{equation}\label{Asddgggsdf}
\big\|[T]\big\|_{{\rm HWA}_{m-1}[\mathbb{X}]/{\sim}\to{\rm HWA}_{m-1}[\mathbb{X}]/{\sim}}\leq C\,\|\nu\|_{{\rm BMO}(\partial\Omega,\sigma)}
\ln\bigg(\frac{e}{\|\nu\|_{{\rm BMO}(\partial\Omega,\sigma)}}\bigg).
\end{equation}
Since $\big({\rm HWA}_{m-1}\big[{\mathbb{X}}\big]\big/\sim\,,\,\|\cdot\|_{{\rm HWA}_{m-1}[{\mathbb{X}}]/\sim}\big)$ is a Banach 
space (cf.\,Theorem~\ref{thm:HWA-X}), the claim in \eqref{Adwsfgg} follows from \eqref{Asddgggsdf}, the smallness of 
$\|\nu\|_{{\rm BMO}(\partial\Omega,\sigma)}$, and a Neumann series argument.

Granted \eqref{Adwsfgg}, we can now prove \eqref{Ttt-ISOMORPH}. To establish that $\tfrac{1}{2}I+T$ is injective, suppose that 
$\dot{h}\in{\rm HWA}_{m-1}[\mathbb{X}]$ is such that $\big(\tfrac{1}{2}I+T\big)\dot{h}=0$. Passing to the quotient space gives 
that $\big[\tfrac{1}{2}I+T\big][\dot{h}]=[0]$, and \eqref{Adwsfgg} then implies $[\dot{h}]=[0]$. Theorem~\ref{thm:HWA-X} forces 
$\dot{h}\in\dot{\mathcal{P}}_{m-2}$, at which point \eqref{polynfujdfh} permits us to write $\dot{h}=\big(\tfrac{1}{2}I+T\big)\dot{h}=0$, as wanted.

To check that $\tfrac{1}{2}I+T$ is surjective, fix $\dot{h}\in{\rm HWA}_{m-1}[\mathbb{X}]$ and use \eqref{Adwsfgg} to conclude 
that there exists $\dot{h}_{0}\in{\rm HWA}_{m-1}[\mathbb{X}]$ such that $\big[\tfrac{1}{2}I+T\big][\dot{h}_{0}]=[\dot{h}]$. 
By Theorem~\ref{thm:HWA-X} there exists $\dot{P}\in\dot{\mathcal{P}}_{m-2}$ such that 
$\big(\tfrac{1}{2}I+T\big)\dot{h}_{0}+\dot{P}=\dot{h}$. Since $\big(\tfrac{1}{2}I+T\big)\dot{P}=\dot{P}$ by \eqref{polynfujdfh}, 
we conclude that $\big(\tfrac{1}{2}I+T\big)\big(\dot{h}_{0}+\dot{P}\big)=\dot{h}$, so $\dot{h}$ belongs to the range of 
$\tfrac{1}{2}I+T$. 

We now show that the $\mathbb{X}$-Dirichlet Problem for $\Delta^{m}$ in $\Omega$ formulated in \eqref{tk1-acxvtru-INTRO.bis} 
has at least one solution satisfying items \ref{property1-INTRO}-\ref{property3-INTRO}.  
To set the stage, pick a boundary datum $\dot{f}\in{\rm HWA}_{m-1}[\mathbb{X}]$. Then \eqref{Ttt-ISOMORPH} guarantees the existence 
of a unique array $\dot{g}\in{\rm HWA}_{m-1}[\mathbb{X}]$ such that $\big(\tfrac{1}{2}I+T\big)\dot{g}=\dot{f}$, while 
\eqref{Adwsfgg} and Theorem~\ref{thm:HWA-X} (keeping in mind \eqref{eq:EQUIV-CLASS.NNN}) ensure that
\begin{equation}\label{eqqqbcbcbcnormms}
\|\dot{g}\|_{{\rm HWA}_{m-1}[\mathbb{X}]}\approx\|\dot{f}\|_{{\rm HWA}_{m-1}[\mathbb{X}]}.    
\end{equation}
Pressing on, define $u:=\dot{\mathfrak{D}}_{\Delta^{m}}\,\dot{g}$ in $\Omega$. It follows from $\eqref{modified-multiprojsdg}$ that 
\begin{equation}\label{Nsafgsdf}
u\in\mathscr{C}^{\infty}(\Omega)\,\,\text{ and }\,\, \Delta^{m}u=0\,\,\text{in }\Omega.
\end{equation}
The argument in \eqref{ddgdsdsdg}-\eqref{fwergerg} gives $\mathcal{N}_{\kappa}(\nabla^{m-1}u)\in\mathbb{X}$ with 
$\big\|\mathcal{N}_{\kappa}(\nabla^{m-1}u)\big\|_{\mathbb{X}}\lesssim\,\|\dot{g}\|_{{\rm HWA}_{m-1}[\mathbb{X}]}$, hence
\begin{equation}\label{Mainnontang}
\big\|\mathcal{N}_{\kappa}(\nabla^{m-1}u)\big\|_{\mathbb{X}}\leq C\,\|\dot{f}\|_{{\rm HWA}_{m-1}[\mathbb{X}]},   
\end{equation}
for some $C\in(0,\infty)$ independent of $\dot{f}$, and by \eqref{TTTbchbcbgg} we have
\begin{equation}\label{Ssdddtrace}
{\rm Tr}^{{}^{\kappa\text{\rm-nt}}}_{m-1}u
={\rm Tr}^{{}^{\kappa\text{\rm-nt}}}_{m-1}\big(\dot{\mathfrak{D}}_{\Delta^{m}}\dot{g}\big)
=\big(\tfrac{1}{2}I+T\big)\dot{g}=\dot{f}.
\end{equation}
Thus, $u$ is a solution to \eqref{tk1-acxvtru-INTRO.bis}, it satisfies \ref{property1-INTRO} due to \eqref{Mainnontang}, 
it satisfies \ref{property2-INTRO} by construction and \eqref{eqqqbcbcbcnormms}, and it satisfies \ref{property3-INTRO} thanks to Theorem~\ref{Jksjadfhsgh}.

At this stage, we defer the proof of uniqueness for the solution of \eqref{tk1-acxvtru-INTRO.bis} and instead focus on the solvability of 
the $\mathbb{X}_{1}$-Regularity Problem \eqref{tk1-acxvtru-INTRO.bis.REG}. With this aim in mind, fix $\dot{h}\in{\rm HWA}_{m-1}[\mathbb{X}_{1}]$ 
and set $w:=\dot{\mathfrak{D}}_{\Delta^{m}}\dot{h}$ in $\Omega$. Since ${\rm HWA}_{m-1}[\mathbb{X}_{1}]\subseteq{\rm HWA}_{m-1}[\mathbb{X}]$, 
the results in \eqref{ddgdsdsdg}-\eqref{fwergerg} are also true in the present setting. Moreover, taking an additional derivative of 
$\nabla^{m-1}w$ in \eqref{ddgdsdsdg} and using \cite[Theorem~2.76, item~(4)]{MMM.HOPT} (while bearing in mind \eqref{HShagdfsdg-THM} 
and \cite[Example~5.1.7, pp.\,556-557]{GHA.IV}) implies that
\begin{equation}\label{Ttilde-Nontangnbdds}
\mathcal{N}_{\kappa}(\nabla^{m}w)\in\mathbb{X}\,\text{ and }\,\big\|\mathcal{N}_{\kappa}(\nabla^{m}w)\big\|_{\mathbb{X}}
\leq C\,\|\dot{h}\|_{{\rm HWA}_{m-1}[\mathbb{X}_{1}]}
\end{equation}
for a constant $C\in(0,\infty)$ independent of $\dot{h}$. Granted \eqref{fwergerg} and \eqref{Ttilde-Nontangnbdds}, item {\it (2)} of Theorem~\ref{thm:HWA-X.aaa} 
implies that ${\rm Tr}^{{}^{\kappa\text{\rm-nt}}}_{m-1}w\in{\rm HWA}_{m-1}[\mathbb{X}_{1}]$ with quantitative control. 
Ultimately, these considerations show that 
\begin{equation}\label{Ttilde-def}
\begin{array}{c}
\widetilde{T}:{\rm HWA}_{m-1}[\mathbb{X}_{1}]\longrightarrow
{\rm HWA}_{m-1}[\mathbb{X}_{1}]
\\[4pt]
\widetilde{T}\dot{h}:={\rm Tr}^{{}^{\kappa\text{\rm-nt}}}_{m-1}\big(\dot{\mathfrak{D}}_{\Delta^{m}}\dot{h}\big)-\tfrac{1}{2}\dot{h}
\,\text{ for each }\,\dot{h}\in{\rm HWA}_{m-1}[\mathbb{X}_{1}]
\end{array}
\end{equation}
is a well-defined, linear, and bounded operator. Its design ensures that 
\begin{equation}\label{TTTbchbcbgg-REG}
\big(\tfrac{1}{2}I+\widetilde{T}\big)\dot{h}={\rm Tr}^{{}^{\kappa\text{\rm-nt}}}_{m-1}
\big(\dot{\mathfrak{D}}_{\Delta^{m}}\dot{h}\big)\,
\text{ for every }\,\dot{h}\in{\rm HWA}_{m-1}[\mathbb{X}_{1}].
\end{equation}
Comparing \eqref{Ttilde-def} with \eqref{Sfeggfdgerg} shows that ${\rm HWA}_{m-1}[\mathbb{X}_{1}]$ is an invariant subspace of $T$ 
and, in fact, $\widetilde{T}$ is the restriction of the operator $T$ to this subspace. 

The claim we make in relation to \eqref{Ttilde-def} is that matters may be arranged, upon further decreasing the threshold $\delta\in(0,\delta_{\ast}]$ 
if necessary, so that 
\begin{equation}\label{Ttt-ISOMORPH-REG}
\tfrac{1}{2}I+\widetilde{T}:{\rm HWA}_{m-1}[\mathbb{X}_{1}]\longrightarrow{\rm HWA}_{m-1}[\mathbb{X}_{1}]\,\,\text{ is an isomorphism}.
\end{equation}
To check \eqref{Ttt-ISOMORPH-REG}, start by noticing that $\dot{\mathcal{P}}_{m-2}\subseteq{\rm HWA}_{m-1}[\mathbb{X}_{1}]$,
hence \eqref{polynfujdfh} holds with $\widetilde{T}$ in place of $T$. Then Theorem~\ref{thm:HWA-X} ensures that the quotient 
operator $[\widetilde{T}]$ acting on ${\rm HWA}_{m-1}[\mathbb{X}_{1}]/\!\sim$ via $[\widetilde{T}][\dot{h}]:=[\widetilde{T}\dot{h}]$ 
is well defined, linear and bounded. We shall show that if $\|\nu\|_{{\rm BMO}(\partial\Omega,\sigma)}<\delta$ with $\delta\in(0,\delta_\ast]$ 
small enough then the operator
\begin{equation}\label{Ttilde-iso}
\tfrac{1}{2}I+[\widetilde{T}\,]:{\rm HWA}_{m-1}[\mathbb{X}_{1}]\big/\sim\,\rightarrow\,
{\rm HWA}_{m-1}[\mathbb{X}_{1}]\big/\sim\,\text{ is a Banach-space isomorphism}.
\end{equation}
To see why this is the case, for each $\dot{h}\in{\rm HWA}_{m-1}[\mathbb{X}_1]$ use \eqref{Ttilde-def} and \eqref{eq:EQUIV-CLASS.NNN} with 
${\mathbb{X}}_1$ in  place of ${\mathbb{X}}$ to write
\begin{equation}\label{mainquotnorm--nn}
\Big\|[\widetilde{T}\,][\dot{h}]\Big\|_{{\rm HWA}_{m-1}[\mathbb{X}_1]/{\sim}}
=\sum_{|\lambda|=m-1}\Big\|\big[\partial^{\lambda}\big(\dot{\mathfrak{D}}_{\Delta^{m}}\dot{h}\big)\big]
\big|_{\partial\Omega}^{{}^{\kappa\text{\rm -nt}}}-\tfrac{1}{2}h_{\lambda}\Big\|_{{\mathbb{X}}_1}.
\end{equation}
As in the passage from \eqref{mainquotnorm} to \eqref{mainPV}, the right-hand side of \eqref{mainquotnorm--nn} is bounded above by 
\begin{equation}\label{mainPV--nn}
\sum_{|\beta|=|\lambda|=m-1}\bigg\|\lim_{\varepsilon\to 0^{+}}\int_{\substack{y\in\partial\Omega\\|\cdot-y|>\varepsilon}}
\nu(y)\cdot(y-\cdot)\,k_{\lambda\beta}(\cdot-y)h_{\beta}(y)\,d\sigma(y)\bigg\|_{{\mathbb{X}}_1}.
\end{equation}
Since according to \cite[Theorem~2.77, item (2)]{MMM.HOPT} boundary-to-boundary singular integral operators of ``chord-dot-normal'' type also satisfy 
geometrically sensitive operator norm estimates on the Sobolev space ${\mathbb{X}}_1$, this analysis shows that there exists a constant $C\in(0,\infty)$ 
depending only on the dimension $n$, the Ahlfors regularity character of $\partial\Omega$, the space $\mathbb{X}$, and $m$ such that 
\begin{equation}\label{MNnfsdghsdf}
\big\|[\widetilde{T}\,]\big\|_{{\rm HWA}_{m-1}[\mathbb{X}_1]/{\sim}\to{\rm HWA}_{m-1}[\mathbb{X}_1]/{\sim}}
\leq C\,\|\nu\|_{{\rm BMO}(\partial\Omega,\sigma)}\ln\bigg(\frac{e}{\|\nu\|_{{\rm BMO}(\partial\Omega,\sigma)}}\bigg).
\end{equation}
Given that $\big({\rm HWA}_{m-1}\big[{\mathbb{X}}_1\big]\big/\!\sim\,,\,\|\cdot\|_{{\rm HWA}_{m-1}[{\mathbb{X}}_1]/\sim}\big)$ is a Banach space 
(cf. Theorem~\ref{thm:HWA-X}), it follows from \eqref{MNnfsdghsdf}, the smallness of $\|\nu\|_{{\rm BMO}(\partial\Omega,\sigma)}$, and 
a Neumann series argument that \eqref{Ttilde-iso} holds. With this in hand, \eqref{Ttt-ISOMORPH-REG} follows via an argument similar 
to the end-game in the proof of \eqref{Ttt-ISOMORPH}.

We finally turn to the task of actually solving the $\mathbb{X}_{1}$-Regularity Problem \eqref{tk1-acxvtru-INTRO.bis.REG} for an 
arbitrary boundary datum $\dot{f}\in{\rm HWA}_{m-1}[\mathbb{X}_{1}]$. From \eqref{Ttt-ISOMORPH-REG} we know that there
exists a unique array $\dot{g}\in{\rm HWA}_{m-1}[\mathbb{X}_{1}]$ such that $\big(\tfrac{1}{2}I+\widetilde{T}\big)\dot{g}=\dot{f}$, 
and which is quantitatively controlled by $\dot{f}$, i.e., $\|\dot{g}\|_{{\rm HWA}_{m-1}[\mathbb{X}_{1}]}\approx\|\dot{f}\|_{{\rm HWA}_{m-1}[\mathbb{X}_{1}]}$.
If we now define $u:=\dot{\mathfrak{D}}_{\Delta^{m}}\,\dot{g}$ in $\Omega$, then \eqref{Nsafgsdf} holds, \eqref{TTTbchbcbgg-REG} and the choice of $\dot{g}$ 
ensure that ${\rm Tr}^{{}^{\kappa\text{\rm-nt}}}_{m-1}u=\dot{f}$, and \eqref{fwergerg} together with \eqref{Ttilde-Nontangnbdds} implies
\begin{equation}\label{Ssdgfdregest}
\big\|\mathcal{N}_{\kappa}(\nabla^{m-1}u)\big\|_{\mathbb{X}}+\big\|\mathcal{N}_{\kappa}(\nabla^{m}u)\big\|_{\mathbb{X}}
\leq C\,\|\dot{f}\|_{{\rm HWA}_{m-1}[\mathbb{X}_{1}]},
\end{equation}
for some $C\in(0,\infty)$ independent of $\dot{f}$. In conclusion, $u$ is a solution to the $\mathbb{X}_{1}$-Regularity Problem 
\eqref{tk1-acxvtru-INTRO.bis.REG}, it satisfies the estimate in \eqref{a538bxd9-tk2-INTRO.REG}, and may be expressed as in 
\eqref{eq:U-INT-REP-FORM} for some $\dot{g}\in{\rm HWA}_{m-1}[\mathbb{X}_{1}]$ uniquely determined and quantitatively controlled by $\dot{f}$.

Granted the solvability of the $\mathbb{X}_{1}$-Regularity Problem for each ${\mathbb{X}}$ in the hierarchy of good spaces on $\partial\Omega$,  
we may invoke \cite[Corollary~18.2]{MMM.HOPT} to conclude the uniqueness of the $\mathbb{X}$-Dirichlet Problem \eqref{tk1-acxvtru-INTRO.bis}.
Ultimately, this shows that the $\mathbb{X}$-Dirichlet Problem for $\Delta^m$ in $\Omega$ is well posed. As a consequence, 
the $\mathbb{X}_{1}$-Regularity Problem \eqref{tk1-acxvtru-INTRO.bis.REG} is also well posed.

It remains to justify \eqref{a538bxd9-tk9-INTRO}. In one direction, let $u$ solve the $\mathbb{X}$-Dirichlet Problem \eqref{tk1-acxvtru-INTRO.bis}
with boundary datum $\dot{f}\in{\rm HWA}_{m-1}[\mathbb{X}_{1}]$. By the well-posedness of the $\mathbb{X}_{1}$-Regularity Problem and the uniqueness in the 
$\mathbb{X}$-Dirichlet Problem, $u$ must be the unique solution of the $\mathbb{X}_{1}$-Regularity Problem \eqref{tk1-acxvtru-INTRO.bis.REG}
with boundary datum $\dot{f}\in{\rm HWA}_{m-1}[\mathbb{X}_{1}]$. In particular, $\mathcal{N}_{\kappa}(\nabla^{m}u)\in\mathbb{X}$ with quantitative control. 
Conversely, assume that $u$ is the solution of the $\mathbb{X}$-Dirichlet Problem 
in \eqref{tk1-acxvtru-INTRO.bis} with boundary datum $\dot{f}\in{\rm HWA}_{m-1}[\mathbb{X}]$, and assume further that 
$\mathcal{N}_{\kappa}(\nabla^{m}u)\in\mathbb{X}$. Then item~{\it (2)} in Theorem~\ref{thm:HWA-X.aaa} yields
$\dot{f}={\rm Tr}^{{}^{\kappa\text{\rm-nt}}}_{m-1}u\in{\rm HWA}_{m-1}[\mathbb{X}_{1}]$, quantitatively.
\end{proof}


\end{document}